\documentclass[10pt]{article}
%\synctex=1
%\pdfminorversion=4

\usepackage{amssymb,amsmath,amsfonts,amsthm}
\usepackage[a4paper,includeheadfoot,left=4cm,right=4cm,top=3cm,bottom=3cm]{geometry}
\usepackage{graphicx}
\usepackage[stretch=10]{microtype} % Text rechts schoen ausrichten

%% zum Bilderexportieren:
% \usepackage{tikz}
% \usepackage{pgfplots}
% \pgfplotsset{compat=newest}
% \pgfplotsset{x tick label style={font=\scriptsize}}
% \pgfplotsset{y tick label style={font=\scriptsize}}
% \usetikzlibrary{external}
% \tikzset{external/system call={lualatex \tikzexternalcheckshellescape -halt-on-error -interaction=batchmode -jobname "\image" "\texsource"}}
% \tikzexternalize

% Beschriftung der Bilder in kleiner Schrift
\usepackage[font={footnotesize}]{caption}

% Abstaende in Bibliographie
\usepackage[numbers,comma,sort&compress]{natbib}
\setlength{\bibsep}{2pt}

\makeatletter
\let\@fnsymbol\@arabic
\makeatother

\usepackage{color}
\definecolor{marin}{rgb}{0.,0.3,0.7}
\usepackage[colorlinks,citecolor=marin,linkcolor=marin,urlcolor=marin,bookmarksopen,bookmarksnumbered]{hyperref}

\definecolor{mygreen}{rgb}{0.,0.7,0.3}
\definecolor{myorange}{rgb}{0.9,0.5,0.0}
\definecolor{myred}{rgb}{0.9,0.1,0.2}

% griechische Buchstaben
\newcommand{\al}{\alpha}

\newcommand{\ga}{\gamma}
\newcommand{\de}{\delta}

\newcommand{\ee}{\epsilon}

\newcommand{\rh}{\rho}
\newcommand{\si}{\sigma}

\newcommand{\om}{\omega}

% bb-Symbole

\newcommand{\Z}{\mathbb{Z}}

\newcommand{\R}{\mathbb{R}}

\newcommand{\T}{\mathbb{T}}
% cal-Symbole

\newcommand{\calC}{\mathcal{C}}
\newcommand{\calE}{\mathcal{E}}
\newcommand{\calK}{\mathcal{K}}
\newcommand{\calO}{\mathcal{O}}

\newcommand{\calS}{\mathcal{S}}
\newcommand{\calU}{\mathcal{U}}
% fette Symbole
\newcommand{\fd}{\mathbf{d}}
\newcommand{\fe}{\mathbf{e}}
\newcommand{\fg}{\mathbf{g}}
\newcommand{\fk}{\mathbf{k}}
\newcommand{\fr}{\mathbf{r}}
\newcommand{\fu}{\mathbf{u}}
\newcommand{\fv}{\mathbf{v}}
\newcommand{\fz}{\mathbf{z}}

\newcommand{\flambda}{{\boldsymbol{\lambda}}}
\newcommand{\fomega}{\boldsymbol{\omega}}
\newcommand{\fOmega}{\boldsymbol{\Omega}}
\newcommand{\fPhi}{\boldsymbol{\Phi}}
\newcommand{\fPsi}{\boldsymbol{\Psi}}
% Normen, etc.
\providecommand{\abs}[1]{\lvert#1\rvert}
\providecommand{\absbig}[1]{\bigl\lvert#1\bigr\rvert}
\providecommand{\absBig}[1]{\Bigl\lvert#1\Bigr\rvert}
\providecommand{\absbigg}[1]{\biggl\lvert#1\biggr\rvert}

\providecommand{\klabig}[1]{\bigl(#1\bigr)}
\providecommand{\klaBig}[1]{\Bigl(#1\Bigr)}
\providecommand{\klabigg}[1]{\biggl(#1\biggr)}
\providecommand{\norm}[1]{\lVert#1\rVert}
\providecommand{\normbig}[1]{\bigl\lVert#1\bigr\rVert}

\providecommand{\normv}[1]{\ensuremath{{\lVert\hskip-1pt\lvert}#1{\rvert\hskip-1pt\rVert}}}
\providecommand{\normvbig}[1]{\ensuremath{{\bigl\lVert\hskip-1pt\bigl\lvert}#1{\bigr\rvert\hskip-1pt\bigr\rVert}}}

\providecommand{\skla}[1]{\langle#1\rangle}
\providecommand{\ekla}[1]{[#1]}
\providecommand{\eklabig}[1]{\bigl[#1\bigr]}
% Text in abgesetzten Formeln
\newcommand{\formulatext}[1]{\qquad\text{#1}\qquad}
\newcommand\myfor{\formulatext{for}}
\newcommand\myforall{\formulatext{for all}}
\newcommand\myand{\formulatext{and}}

\newcommand\myif{\formulatext{if}}

% Sonstiges
\newcommand{\iu}{\mathrm{i}}
\newcommand{\e}{\mathrm{e}}
\newcommand{\drm}{\mathrm{d}}
% Operatoren

\DeclareMathOperator{\sinc}{sinc}

%Theoremumgebung
\newtheorem{theorem}{Theorem}%[section]
\newtheorem{lemma}[theorem]{Lemma}
\theoremstyle{definition}
\newtheorem{remark}[theorem]{Remark}
\newtheorem{example}[theorem]{Example}

% individuell
\newcommand{\jvec}{{\skla{j}}}
\newcommand{\lvec}{{\skla{l}}}

\newcommand{\transpose}{{\hspace{-0.01cm}\scriptscriptstyle\mathrm T}}

%\numberwithin{equation}{section}

\title{Metastable energy strata 
in numerical discretizations 
of weakly nonlinear wave equations
}

\author{Ludwig Gauckler\,\thanks{Institut f\"ur Mathematik,
        TU Berlin,
        Stra{\ss}e des 17.\ Juni 136,
        D-10623 Berlin, Germany. Present address: Institut f\"ur Mathematik,
        FU Berlin,
        Arnimallee 9,
        D-14195 Berlin, Germany ({\tt gauckler@math.fu-berlin.de}).}
        \and
        Daniel Weiss\,\thanks{Institut f\"ur Angewandte und Numerische Mathematik,
        Karlsruher Institut f\"ur Technologie (KIT),
        Englerstr.\ 2,
        D-76131 Karlsruhe, Germany
        ({\tt daniel.weiss@kit.edu}).}
}

\date{Version of 9 December 2016}

\begin{document}

\maketitle

\begin{abstract}
The quadratic nonlinear wave equation on a one-dimensional torus with small initial values located in a single Fourier mode is considered. In this situation, the formation of metastable energy strata has recently been described and their long-time stability has been shown. 
The topic of the present paper is the correct reproduction of these metastable energy strata by a numerical method. For symplectic trigonometric integrators applied to the equation, it is shown that these energy strata are reproduced even on long time intervals in a qualitatively correct way. \\[1.5ex]
\textbf{Mathematics Subject Classification (2010):} 
65P10, % Numerical Analysis -> Numerical problems in dynamical systems -> Hamiltonian systems including symplectic integrators 
65P40, %Numerical Analysis -> Numerical problems in dynamical systems -> Nonlinear stabilities 
%65M20, % Numerical Analysis ->  Partial differential equations, initial value and time-dependent initial-boundary value problems -> Method of lines
65M70, % Numerical Analysis ->  Partial differential equations, initial value and time-dependent initial-boundary value problems -> Spectral, collocation and related methods
35L05.\\[1.5ex] 
\textbf{Keywords:} Nonlinear wave equation, trigonometric integrators, long-time behaviour, modulated Fourier expansion.
\end{abstract}

\section{Introduction}

We consider the nonlinear wave equation 
\begin{equation}\label{eq-nlw}
\partial_{tt} u - \partial_{xx} u + \rh u = u^2, \qquad u=u(x,t)\in\R,
\end{equation}
on a one-dimensional torus, $x\in\T=\R/(2\pi\Z)$, with a positive  Klein--Gordon parameter $\rh$. We assume that the initial value consists of a single Fourier mode and is small. 
The smallness of the initial value corresponds to a weakly nonlinear setting. 
In \cite{Gauckler2012}, it has been investigated how the energy, which is initially located in this single Fourier mode, is distributed among the other modes in the course of time. More precisely, the formation of energy strata has been shown that are persist on long time intervals, see Section \ref{sec-main} for a precise description of this result. 

In the present paper, we discretize the nonlinear wave equation and answer the question whether this long-time property of the exact solution is inherited by the numerical method. As a numerical method, we consider a spectral collocation in space combined with a symplectic trigonometric integrator in time. We show that this numerical method in fact reproduces the energy strata of the exact solution even on a long time interval, provided that a certain numerical non-resonance condition is fulfilled, see Section \ref{sec-main} for a formulation of this main result. 

The considered numerical method is already known to behave well on long time intervals with respect to preservation of regularity and near-conservation of actions \cite{Faou2009a,Cohen2008a}, as well as near-conservation of energy and momentum \cite{Cano2006,Cano2013,Cohen2008a,Faou2011}. Our result adds to this list an even more sophisticated long-time property of the exact solution that is reproduced in a qualitatively correct way. In comparison to previous results, the considered situation requires less control of interactions in the nonlinearity, which allows us to exclude numerical resonances under weaker restrictions on the time step-size than needed previously. 

The proof of our main result is given in Sections \ref{sec-mfe}--\ref{sec-proof2}. As for the exact solution, it is based on a modulated Fourier expansion in time, with a multitude of additional difficulties due to the discrete setting that will be described in detail. 

Related questions have been studied for symplectic and non-symplectic methods applied to the nonlinear Schr\"odinger equation. In this case, an initial value consisting of a single Fourier mode yields a solution that continues to consist of a single Fourier mode for all times (plane wave solution). In particular and in contrast to the nonlinear wave equation considered in the present paper, there is no formation of energy strata in the other modes. The stability in numerical discretizations of these plane wave solutions under small perturbations of the initial value is an old question \cite{Weideman1986} and has been studied on short time intervals \cite{Weideman1986,Dahlby2009,Khanamiryan,Lakoba2013,Cano2016} and on long time intervals \cite{Faou2014}.

\section{Statement of the main result}\label{sec-main}

\subsection{Metastable energy strata revisited}\label{subsec-revisited}

Writing the solution $u=u(x,t)$ of the nonlinear wave equation \eqref{eq-nlw} as a Fourier series $\sum_{j\in\Z} u_j(t) \e^{\iu jx}$ with Fourier coefficients $u_j$,
%the total energy of the nonlinear wave equation \eqref{eq-nlw} is given by 
%\[
%\frac12 \sum_{j\in\Z} \Bigl( \abs{\om_j u_j(t)}^2 + \abs{\dot{u}_j(t)}^2\Bigr) + \frac1{6\pi} \int_{-\pi}^\pi u(x,t)^3 \,\drm x,
%\]
%where $\om_j$ are the frequencies
%\begin{equation}\label{eq-freq}
%\om_j=\sqrt{j^2+\rho}, \qquad j\in\Z.
%\end{equation}
%The quadratic part of the total energy, which is dominant for small solutions, is made up of the 
the \emph{mode energies} of the nonlinear wave equation \eqref{eq-nlw} are given by
\begin{equation}\label{eq-modeenergies}
E_j(t) = \tfrac12 \abs{\om_j u_j(t)}^2 + \tfrac12 \abs{\dot{u}_j(t)}^2, \qquad j\in\Z,
\end{equation}
where $\om_j$ are the frequencies
\begin{equation}\label{eq-freq}
\om_j=\sqrt{j^2+\rho}, \qquad j\in\Z.
\end{equation}
Note that $E_j=E_{-j}$ for real-valued initial values (and hence real-valued solutions).

We are interested in the evolution of these mode energies.
Assuming that the initial values are small and concentrated in the first mode,
\[
E_1(0)\le \ee\ll 1, \qquad E_j(0)=0 \myfor \abs{j}\ne 1,
\]
an inspection of interaction of Fourier modes in the nonlinearity $g(u)=u^2$ suggests that the energy in the first mode is distributed among the other modes in a geometrically decaying way:
\[
E_0(t) = \mathcal{O}(\ee^2), \qquad E_j(t) = \mathcal{O}(\ee^{\abs{j}}) \myfor j\ne 0,
\]
at least for small times $t$. 

The main result of \cite{Gauckler2012} (Theorem 1 for $\theta=1$) states that these energy strata are in fact stable on long time intervals in the following sense: for fixed but arbitrary $K\ge 2$ and $s>\frac12$, we have on the long time interval $0\le t\le c \ee^{-K/2}$
\begin{align*}
E_0(t) &\le C \ee^2,\\
E_l(t) &\le C \ee^{l}, \qquad 0<l<K,\\
\sum_{l=K}^\infty \si_l \ee^{-K} E_l(t) &\le C
\end{align*}
with the weights
\begin{equation}\label{eq-sigma}
\si_j = \max\bigl(\abs{j},1\bigr)^{2s}.
\end{equation}
This can be written in compact and equivalent form as
\[
\sum_{l=0}^\infty \si_l \ee^{-e(l)} E_l(t) \le C \myfor 0\le t\le c \ee^{-K/2}
\]
with the energy profile 
\begin{equation}\label{eq-e}
e(j) = \begin{cases} 2, & j=0,\\ \abs{j}, & 0<\abs{j}<K, \\ K, & \abs{j}\ge K, \end{cases}
\end{equation}
see also Figure \ref{fig-e} for an illustration. In addition to these bounds on all mode energies, the first mode energy is nearly conserved on this long time interval, $\abs{E_1(t)-E_1(0)}\le C\ee^2$. All constants depend on $K$ and $s$ but not on $\ee$, and they deteriorate as $K$ or $s$ grow.
This result is valid for all but finitely many values of the parameter $\rho\in[0,\rho_0]$ in the wave equation \eqref{eq-nlw}, which excludes some resonant situations. The question that we want to answer in the present paper is whether this stable behaviour on long time intervals is inherited by a typical structure-preserving numerical discretization of the nonlinear wave equation.

\begin{figure}
\centering
% \begin{tikzpicture}[x=0.45cm,y=0.45cm]

%   \def\xmin{-12}
%   \def\xmax{12}
%   \def\ymin{-7}
%   \def\ymax{1}

%   \draw [dotted, gray, step=1] (\xmin,\ymin) grid (\xmax-0.25,\ymax-0.25);
%   \draw[->] (\xmin,\ymin) -- (\xmax,\ymin) node[right] {$j$};
%   \draw[->] (\xmin,\ymin) -- (\xmin,\ymax) node[above] {$\ee^{e(j)}$};

%   \foreach \x in {-10,-8,-6,-4,-2,0,2,4,6,8,10}
%     \draw (\x,\ymin+0.2) -- (\x,\ymin-0.2)
%       node[anchor=north] {\x};
%   \foreach \x in {-11,-9,-7,-5,-3,-1,1,3,5,7,9,11}
%     \draw (\x,\ymin+0.2) -- (\x,\ymin-0.2);
%   \foreach \x in {-11.3,11.3}
%     \node at (\x,\ymin-0.45) [below] {$\hdots$};
%   \foreach \y in {0,1,2,3,4,5}
%     \draw (\xmin+0.2,-\y) -- (\xmin-0.2,-\y)
%       node[anchor=east] {$\ee^{\y}$};

%   \foreach \Point in {(0,-2), (1,-1), (2,-2), (3,-3), (4,-4), (5,-5), (6,-5), (7,-5), (8,-5), (9,-5), (10,-5), (11,-5)}{
%     \node at \Point {\textbullet};
%   }

%   \foreach \Point in {(-1,-1), (-2,-2), (-3,-3), (-4,-4), (-5,-5), (-6,-5), (-7,-5), (-8,-5), (-9,-5), (-10,-5), (-11,-5)}{
%     \node at \Point {\textbullet};
%   }

% \end{tikzpicture}
\includegraphics{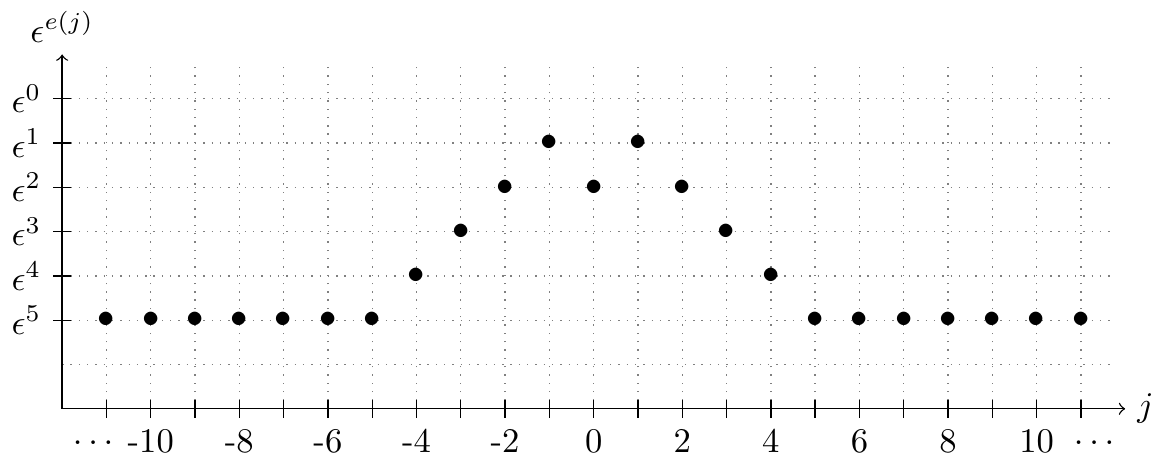}
\caption{Illustration of the bound $\ee^{e(j)}$ for $E_j$ in the case $K=5$.}\label{fig-e}
\end{figure}

\subsection{Trigonometric integrators}

For the numerical discretization of the nonlinear wave equation~\eqref{eq-nlw} we consider trigonometric integrators in time applied to a spectral collocation in space, see, for example, \cite{Bao2012,Cano2006,Cano2013,Cohen2008a,Dong2014,Faou2011,Faou2009a,Gauckler2015}. 

In these methods, the solution $u=u(x,t)$ of \eqref{eq-nlw} is approximated, at discrete times $t_n=n\tau$ with the time step-size $\tau$, by a trigonometric polynomial of degree $M$:
\[
u(x,t_n) \approx \sum_{j=-M}^{M-1} u_j^n \e^{\iu j x}, \qquad n=0,1,\ldots.
\]
The Fourier coefficients $\fu^n=(u_{-M}^n,\dots,u_{M-1}^n)^\transpose$ of this trigonometric polynomials are computed with the trigonometric integrator
\begin{subequations}\label{eq-method}
\begin{equation}\label{eq-method-main}
\fu^{n+1} - 2\cos(\tau\fOmega) \fu^n + \fu^{n-1} = \tau^2 \fPsi \bigl( (\fPhi \fu^n) \ast (\fPhi \fu^n) \bigr),
\end{equation}
where $\fOmega$ denotes the diagonal matrix containing the frequencies $\om_j$, $j=-M,\dots,M-1$, of \eqref{eq-freq}. In addition, $\ast$ denotes the discrete convolution defined by
\[
(\fu \ast \fv)_j = \sum_{j_1+j_2\equiv j\bmod{2M}} u_{j_1} v_{j_2}, \qquad j=-M,\dots,M-1,
\]
which can be computed efficiently using the fast Fourier transform.
The method is characterized by the diagonal filter matrices $\fPsi=\psi(\tau\fOmega)$ and $\fPhi=\phi(\tau\fOmega)$, which are computed from filter functions $\psi$ and $\phi$. These filter functions are assumed to be real-valued, bounded and even with $\psi(0)=\phi(0)=1$. The starting approximation is computed by
\begin{equation}\label{eq-method-starting}
\fu^1 = \cos(\tau\fOmega) \fu^0 + \tau\sinc(\tau\fOmega) \dot{\fu}^0 + \tfrac12 \tau^2 \fPsi \bigl( (\fPhi \fu^0) \ast (\fPhi \fu^0) \bigr),
\end{equation}
and a velocity approximation by
\begin{equation}\label{eq-method-velocity}
2\tau\sinc(\tau\fOmega) \dot{\fu}^n = \fu^{n+1} - \fu^{n-1}.
\end{equation}
\end{subequations}
An error analysis of these methods has been given in \cite{Gauckler2015}.
We assume here that the considered trigonometric integrator is symplectic,
\begin{equation}\label{eq-symplectic}
\psi(\xi) = \sinc(\xi) \phi(\xi),
\end{equation}
see \cite[Chap.~XIII, Eq.~(2.10)]{Hairer2006}. Examples for such filter functions are $\phi=1$ and $\psi=\sinc$, which is the impulse method \cite{Grubmueller1991,Tuckerman1992} or method of Deuflhard \cite{Deuflhard1979}, as well as $\phi=\sinc$ and $\psi=\sinc^2$, which is the mollified impulse method of Garc\'{i}a-Archilla, Sanz-Serna \& Skeel \cite{GarciaArchilla1999}. Certain choices of filter functions leading to non-symplectic methods could also be handled using the transformation indicated in \cite[Remark 3.2]{Cohen2015}, but we do not pursue this here.

We are interested in the mode energies \eqref{eq-modeenergies} along the numerical solution \eqref{eq-method}. We denote them in the following by
\begin{equation}\label{eq-modeenergies-disc}
E_j^n = \tfrac12 \abs{\om_j u_j^n}^2 + \tfrac12 \abs{\dot{u}_j^n}^2, \qquad j=-M,\dots,M-1.
\end{equation}
We note that $E_j^n=E_{-j}^n$ for real-valued initial values (and hence numerical solutions that take real values in the collocation points $x_k=\pi k/M$, $k=-M,\dots,M-1$), and we set $E_{M}^n=E_{-M}^n$.
As for the exact solution in Section \ref{subsec-revisited}, we consider initial values with
\begin{equation}\label{eq-init}
E_1^0 \le \ee, \qquad E_j^0 = 0 \myfor \abs{j}\ne 1.
\end{equation}

\subsection{Metastable energy strata in trigonometric integrators}\label{subsec-result}

We now state our main result which says, roughly speaking, that trigonometric integrators \eqref{eq-method} integrate the metastable energy strata in nonlinear wave equations \eqref{eq-nlw} qualitatively correctly.

For this result, we need a non-resonance condition on the time step-size $\tau$ and on the frequencies $\om_j$, $j=-M,\dots,M-1$, of \eqref{eq-freq}. In the statement of this condition, we consider indices $j\in\{-M,\dots,M-1\}$ and vectors $\fk=(k_0,\dots,k_M)^\transpose$ of integers $k_l$, and we write 
\[
\fk\cdot\fomega = \sum_{l=0}^M k_l\omega_l
\]
with the vector $\fomega=(\omega_0,\dots,\omega_M)^\transpose$ of frequencies.
We fix $3\le K\le M$, and we denote, corresponding to \cite{Gauckler2012}, by $\calK$ the set
\begin{align}
\calK &= \bigl\{\, (j,\fk) : \max(\abs{j},\mu(\fk))<2K \text{ and } k_l=0 \text{ for all } l\ge K \,\bigr\} \label{eq-calK}\\
 &\qquad \cup \bigl\{ (j,\pm \skla{(j-r) \bmod{2M}}+\fk) : \abs{(j-r)\bmod{2M}}\ge K, \, \abs{r}<K,\, \mu(\fk)<K \, \bigr\}, \notag
\end{align}
where $\jvec = (0,\dots,0,1,0,\dots,0)^\transpose$ is the $\abs{j}$th unit vector and 
\begin{equation}\label{eq-mu}
\mu(\fk) = \sum_{l=0}^M \abs{k_l} e(l) = 2\abs{k_0} + \sum_{l=1}^{K-1} \abs{k_l} l + K \sum_{l=K}^M \abs{k_l}.
\end{equation}
In comparison to the set $\calK$ for the exact solution (see \cite[Equation (11)]{Gauckler2012}), we have to consider indices modulo $2M$ due to the discretization in space. In addition, we correct here a typo in \cite[Equation (11)]{Gauckler2012}, where $\abs{j}\ge K$ should be replaced by $\abs{j-r}\ge K$. 

\medskip

\noindent\textbf{Non-resonance condition.} For given $0\le\nu<\frac12$, we assume that there exists a constant $0<\gamma\le 1$ such that 
\begin{subequations}\label{eq-nonres-gesamt}
\begin{equation}\label{eq-nonres-weak}
\absbig{ \sin\bigl(\tfrac12 \tau (\omega_j\pm \fk\cdot\fomega) \bigr) } \ge \gamma \tau \ee^{\nu/2} \myforall (j,\fk)\in\calK, \, \fk\ne\mp\jvec
\end{equation}
and
\begin{equation}\label{eq-nonres-strong}
\absbig{ \sin\bigl(\tfrac12 \tau (\omega_j\pm \fk\cdot\fomega) \bigr) } \ge \gamma \tau \myforall (j,\fk)\in\calK, \, \fk\ne\jvec, \, \fk\ne-\jvec, \, \abs{j}\le K.
\end{equation}
We emphasize that only $\fk=-\jvec$ is excluded in the estimate for $\sin(\tfrac12 \tau (\omega_j + \fk\cdot\fomega) )$ in the first condition \eqref{eq-nonres-weak}, whereas $\fk=\jvec$ and $\fk=-\jvec$ are both excluded in the estimate for this sine  in the second condition \eqref{eq-nonres-strong}.
\end{subequations}

\medskip

Under this non-resonance condition, we prove in this paper the following discrete analogon of the analytical result \cite[Theorem 1]{Gauckler2012} described in Section \ref{subsec-revisited}.

\begin{theorem}\label{thm-main}
Fix an integer $3\le K\le M$ and real numbers $0\le\nu<\frac12$ and $s>\frac12$, and assume that the non-resonance condition \eqref{eq-nonres-gesamt} holds for this $\nu$. Then there exist $\ee_0>0$ and positive constants $c$ and $C$ such that the mode energies \eqref{eq-modeenergies-disc} along trigonometric integrators \eqref{eq-method} for nonlinear wave equations \eqref{eq-nlw} with initial data \eqref{eq-init} with $0<\ee\le \ee_0$ satisfy, over long times
\[
0\le t_n = n\tau \le c\ee^{-K(1-2\nu)/2}, 
\]
the bounds
\[
\sum_{l=0}^{M} \si_l \ee^{-e(l)} E_{l}^n \le C
\]
and $\ee^{-e(1)} \abs{E_{1}^n-E_{1}^{0}}\le C \ee^{1-2\nu}$. 
The constants $c$ and $C$ depend on $K$, $\nu$ and $s$, but not on $\ee$ and the discretization parameters $\tau$ and~$M$.
\end{theorem}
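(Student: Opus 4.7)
The plan is to mimic the modulated Fourier expansion (MFE) proof of \cite[Theorem 1]{Gauckler2012} for the exact solution, adapted to the discrete scheme \eqref{eq-method}. The central idea is to seek a formal expansion
\[
u_j^n \approx \sum_{(j,\fk)\in\calK} z_j^{\fk}(\ee t_n)\, \e^{\iu(\fk\cdot\fomega) t_n}, \qquad j=-M,\dots,M-1,
\]
where the modulation functions $z_j^{\fk}$ depend only on the slow time $\ee t$, and the summation set is precisely the $\calK$ from \eqref{eq-calK}. The dominant modes are $z_j^{\pm\jvec}$, which should capture the forward/backward travelling wave carrying the mode energy $E_j^n$, while all other modulation functions encode the cascaded interactions responsible for the strata of size $\ee^{e(l)}$.

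\textbf{Derivation of the modulation equations.} Insert this ansatz into \eqref{eq-method-main}, use the discrete convolution rule (which automatically produces the modulo-$2M$ arithmetic in the first index and ordinary addition of $\fk$), and collect the coefficients of each oscillatory factor $\e^{\iu(\fk\cdot\fomega) t_n}$. The linear part of \eqref{eq-method-main} produces the symbol
\[
\e^{\iu\tau\fk\cdot\fomega}-2\cos(\tau\omega_j)+\e^{-\iu\tau\fk\cdot\fomega} = -4\sin\bigl(\tfrac12\tau(\omega_j+\fk\cdot\fomega)\bigr)\sin\bigl(\tfrac12\tau(\omega_j-\fk\cdot\fomega)\bigr),
\]
which is the source of the numerical small-divisors. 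For $\fk=\pm\jvec$ one of the sines vanishes at $\ee=0$ and the equation is kept as a slow second-order differential equation for the dominant modulation; for all other $(j,\fk)\in\calK$ one divides by both sines, using the non-resonance condition \eqref{eq-nonres-weak} (or \eqref{eq-nonres-strong} when $|j|\le K$) to obtain algebraic equations for $z_j^{\fk}$ with a factor loss of at most $\ee^{-\nu}$.

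\textbf{A priori bounds and defect estimate.} Using a fixed-point iteration (or an abstract implicit function theorem) in an appropriately weighted space of modulation sequences, construct the $z_j^{\fk}$ with the size estimate
\[
\|z_j^{\fk}\|\lesssim \ee^{(e(j)+\mu(\fk))/2}\,\ee^{-\nu(|\fk|_1-1)/2}
\]
(the extra $\ee^{-\nu}$ factors being charged once per divisor from \eqref{eq-nonres-weak}), truncating the expansion at a power of $\ee$ chosen so that the residual when substituted back into \eqref{eq-method-main} is $\mathcal{O}(\ee^{N})$ for $N$ slightly larger than $K(1-2\nu)/2$. Comparing the exact numerical iterates with the truncated MFE on a short interval of length $\mathcal{O}(1)$ via a discrete Gronwall / stability estimate for \eqref{eq-method-main} then gives $u_j^n=\tilde u_j(t_n)+\mathcal{O}(\ee^{N-1})$.

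\textbf{Almost-invariants, long-time propagation, and the main obstacle.} Exploiting the symplecticity relation \eqref{eq-symplectic}, the formal modulation system admits almost-invariants $\calE_l$, one for each $l$, that agree with the mode energies $E_l^n$ up to higher-order corrections in $\ee$ and are conserved by the modulation dynamics up to $\mathcal{O}(\ee^{e(l)+N'})$ with $N'$ large. Iterating this near-conservation on $\mathcal{O}(\ee^{-K(1-2\nu)/2})$ consecutive short intervals, and using the patching argument familiar from the MFE literature to carry the modulation functions from one interval to the next, yields the claimed bounds on the $E_l^n$ and the sharpened near-conservation of $E_1^n$. The main obstacle, absent in the continuous-time analysis, is the bookkeeping caused by spatial aliasing: convolutions are taken modulo $2M$, so interactions of high Fourier modes can fold down to low indices and generate the second family in \eqref{eq-calK}, requiring separate non-resonance bookkeeping for pairs $(j,\pm\skla{(j-r)\bmod 2M}+\fk)$. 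A secondary difficulty is that \eqref{eq-nonres-weak} is genuinely weaker than the corresponding analytical non-resonance condition ($\ee^{\nu/2}$ rather than $1$), so every algebraic elimination charges a power of $\ee^{-\nu/2}$; tracking these losses through the MFE forces the truncation order to grow with $K$ and is responsible for the factor $1-2\nu$ in the final time scale.
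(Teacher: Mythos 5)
Your architecture coincides with the paper's: a modulated Fourier expansion indexed by the set $\calK$ of \eqref{eq-calK}, elimination of off-diagonal modulation functions through the discrete small divisors $4\sin(\tfrac12\tau(\omega_j-\fk\cdot\fomega))\sin(\tfrac12\tau(\omega_j+\fk\cdot\fomega))$ under \eqref{eq-nonres-gesamt}, almost-invariants $\calE_l$ obtained from the invariance of an extended potential combined with the symplecticity \eqref{eq-symplectic}, and patching of unit-length intervals. Two quantitative ingredients of your sketch are however wrong as stated and would prevent the argument from closing. First, the a priori bound $\norm{z_j^{\fk}}\lesssim \ee^{(e(j)+\mu(\fk))/2}\,\ee^{-\nu(\abs{\fk}_1-1)/2}$ is additive in $e(j)$ and $\mu(\fk)$ where it must be a maximum: for the dominant diagonal term one has $e(1)=\mu(\skla{1})=1$, so your weight forces $z_1^{\pm\skla{1}}=\calO(\ee)$, whereas the initial condition $E_1^0\le\ee$ (attained at order $\ee$) forces $z_1^{\pm\skla{1}}(0)\sim\ee^{1/2}$; no fixed-point iteration in that weighted space can produce the actual expansion. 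The paper works with $z_j^\fk=\calO(\de^{m(j,\fk)})$, $\de=\ee^{1/2}$, $m(j,\fk)=\max(e(j),\max_{l:k_l\ne0}e(l))$, and charges the $\nu$-losses not per divisor but per order of the expansion \eqref{eq-modfunc-exp} in powers of $\de^{1-2\nu}$; note also that each off-diagonal elimination costs $\ee^{-\nu}$ (the divisor is a \emph{product} of two sines, each only $\ge\gamma\tau\ee^{\nu/2}$), not $\ee^{-\nu/2}$, which is precisely why the exponent is $1-2\nu$ and why the slow time must be $\ee^{\nu/2}t$ (not $\ee t$) so that each time derivative buys back one factor $\ee^{\nu/2}$.

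Second, truncating so that the residual is only $\calO(\ee^{N})$ with $N$ slightly above $K(1-2\nu)/2$ is not sufficient for the long-time propagation. To survive $\ee^{-K(1-2\nu)/2}$ short intervals, the per-interval drift of $\sum_l\si_l\ee^{-e(l)}\abs{\calE_l(t_n)-\calE_l(0)}$ must be $\calO(\ee^{K(1-2\nu)/2})$; since that drift is a sum of products of modulation functions of size $\ee^{e(l)/2}$ with defects, the defects themselves must be of size about $\ee^{e(l)/2+K(1-2\nu)}$. The paper truncates at order $2K-1$ exactly to achieve defects of size $\ee^{K(1-2\nu)}$ (Theorem \ref{thm-3}, Lemmas \ref{lemma-defect}--\ref{lemma-difference}). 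Finally, the patching step needs control in \emph{both} directions between $\calE_l$ and $E_l^n$ (Theorems \ref{thm-4} and \ref{thm-7}); the direction "mode energies controlled by almost-invariants" is the delicate one, requiring that off-diagonal dominant terms be controlled by diagonal ones and that $\calE_l$ be close to $\omega_l^2(\abs{z_l^{\lvec}}^2+\abs{z_l^{-\lvec}}^2)$ (Lemmas \ref{lemma-dom-modeen}--\ref{lemma-calE-diag}); your phrase "agree with the mode energies up to higher-order corrections" compresses away this step, as well as the special cancellations ($z_{1,1}^\fk=z_{1,2}^\fk=0$ for $\fk\ne\pm\skla{1}$, $\dot z_{1,2}^{\pm\skla{1}}=0$) needed for the refined bound $\ee^{-e(1)}\abs{E_1^n-E_1^0}\le C\ee^{1-2\nu}$.
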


\begin{remark}
The result of Theorem \ref{thm-main} gets stronger for larger $K$ (with worse constants, however), but also the assumption gets stronger, because the set $\calK$ (see \eqref{eq-calK}) in the non-resonance condition \eqref{eq-nonres-gesamt} grows. 
We can thus also cover the (not so interesting) border case $K=2$ as considered in \cite{Gauckler2012} provided that the non-resonance assumption holds for $K=3$. Without requiring the non-resonance condition with $K=3$, our techniques of proof can be used to show that Theorem \ref{thm-main} holds for $K=2$, if we replace $\ee^{-e(1)} \abs{E_{1}^n-E_{1}^{0}}\le C \ee^{1-2\nu}$ by $\ee^{-e(1)} \abs{E_{1}^n-E_{1}^{0}}\le C \ee^{(1-2\nu)/2}$. The same remark applies to \cite{Gauckler2012}.
\end{remark}

The proof of Theorem~\ref{thm-main} is given in Sections \ref{sec-mfe}--\ref{sec-proof2} below. The structure of the proof is similar to the structure of the corresponding proof for the exact solution given in \cite{Gauckler2012}, with a multitude of additional difficulties due to the discrete setting. 

The main difference of this result in comparison with the corresponding result \cite[Theorem 1]{Gauckler2012} for the exact solution is the required non-resonance condition. The non-resonance condition \eqref{eq-nonres-gesamt} excludes two types of resonances. First, it requires that $\om_j\pm\fk\cdot\fomega$ is bounded away from zero for $(j,\fk)\in\calK$ with $\fk\ne\mp\jvec$. This is the non-resonance condition for the exact solution, which can be shown to hold for all except finitely many values of $\rho$ in a fixed interval, see \cite[Section 3.2]{Gauckler2012}. In addition, the non-resonance condition \eqref{eq-nonres-gesamt} requires also that products $\tau(\om_j\pm\fk\cdot\fomega)$ are bounded away from \emph{nonzero} integer multiples of $2\pi$. If this latter condition is violated, we observe numerical resonances as illustrated in the following section.

Numerical resonances can be typically excluded under some CFL-type step-size restriction on the time step-size. A feature of the considered situation is that they can be excluded under less restrictive assumptions than in previous studies on the long-time behaviour of trigonometric integrators for nonlinear wave equations \cite{Cohen2008a,Faou2009a}. In particular, there are no numerical resonances on the time interval $0\le t \le \ee^{-K/2}$ under the CFL-type step-size restriction
\begin{equation}\label{eq-cfl}
\tau (M+K) \sqrt{1+\rho} \le c < \pi,
\end{equation}
for some constant $c$ (we may take $\nu=0$ here). This follows from the structure of the set $\calK$ of \eqref{eq-calK} which enters the non-resonance condition: this set allows only a single large frequency $\om_l\le M\sqrt{1+\rho}$ in the linear combination $\fk\cdot\fomega$, and it allows to bound the remaining part of $\fk\cdot\fomega$ by $2K\sqrt{1+\rho}$ (using the definition of $\mu$ and the bounds on $\mu$ in $\calK$). Under the above CFL condition \eqref{eq-cfl} and using $\om_j\le M\sqrt{1+\rho}$, the arguments $\tfrac12 \tau (\om_j\pm\fk\cdot\fomega)$ of the sines in the non-resonance condition \eqref{eq-nonres-gesamt} are thus bounded by $c<\pi$, which excludes numerical resonances (but not analytical resonances).

In the situation of \cite{Cohen2008a,Faou2009a}, the stronger CFL-type step-size restriction of the form $\tau M K \le c < \pi$ has to be used to exclude numerical resonances. 

For nonzero $\nu$, the numerical non-resonance condition \eqref{eq-nonres-gesamt} resembles the one used in \cite{Cohen2008a}, see Equations (23) and (24) therein. We note that the reduction (24) there is of no relevance in our context because of the structure of the set $\calK$. We also note that the number of indices $(j,\fk)$ that have to satisfy the stronger condition \eqref{eq-nonres-strong}, which does not appear in \cite{Cohen2008a}, is bounded independently of the spatial discretization parameter $M$. 

\medskip

There are several possible extensions of Theorem \ref{thm-main}. 
First, we could extend the result, as in \cite{Gauckler2012}, to an energy profile $e$ with $e(l)=K+(\abs{l}-K)(1-\theta)$ instead of $e(l)=K$ for $\abs{l}\ge K$, where $0<\theta\le 1$. This shows that also the mode-energies $E_l$ for $l>K$ decay geometrically, but only with a smaller power of $\ee$ close to $1$. As in \cite{Gauckler2012}, the time-scale is then restricted to $0\le t_n\le \ee^{-\theta K(1-2\nu)/2}$.

Second, the result holds for general nonlinearities $g(u)$ instead of $u^2$ if $g$ is real-analytic near $0$ and $g(0)=g'(0)=0$. In addition, stronger estimates hold if further derivatives of $g$ vanish at $0$.

Finally, the result can be extended to more general initial energy profiles, for example to the situation where a whole band $E_l^0$, $\abs{l}\le B$, of initial mode energies is of order~$\ee$.

\subsection{Numerical experiment}\label{subsec-numexp}

We consider the nonlinear wave equation \eqref{eq-nlw} with $\rho=\sqrt{3}$ and with initial value satisfying \eqref{eq-init} for $\ee=10^{-3}$. 
We apply the trigonometric integrator \eqref{eq-method} with filter functions $\phi=1$ and $\psi=\sinc$ (the impulse method or method of Deuflhard) to the equation. We take $M=2^5$ for the spectral collocation in space, and we use three different time step-sizes for the discretization in time\footnote{The code is available at \url{http://www.waves.kit.edu/downloads/CRC1173_Preprint_2016-13_supplement.zip}}. 

\begin{figure}[t]
\, \includegraphics{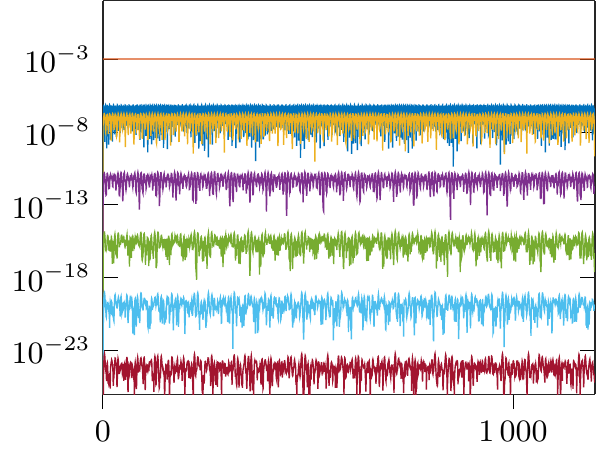} \hfill \includegraphics{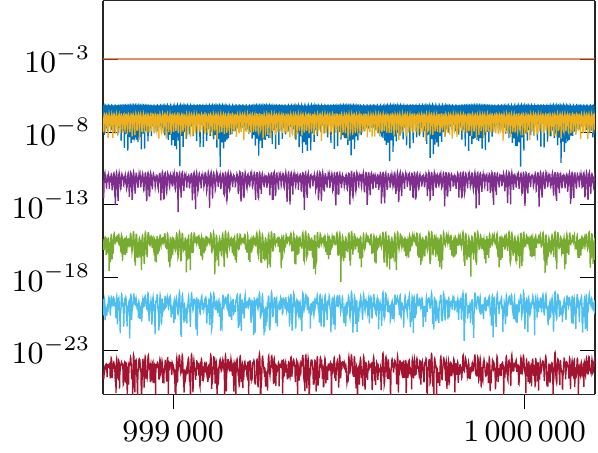} \,
%\, \input{codes_paper_v2/fig_nonres_1.tikz} \hfill \input{codes_paper_v2/fig_nonres_2.tikz} \,
\caption{Mode energies $E_l^n$ vs.\ time $t_n$ for the numerical solution with time step-size $\tau=0.05$.}\label{fig-nonres}
\end{figure}

The first time step-size is $\tau=0.05$. For this time step-size, the trigonometric integrator integrates the geometrically decaying energy strata qualitatively correctly, even on long time intervals, see Figure \ref{fig-nonres} (every tenth time step is plotted there). The time step-size fulfills the CFL-type condition \eqref{eq-cfl} with $K=6$, under which numerical resonances in the non-resonance condition \eqref{eq-nonres-gesamt} of Theorem \ref{thm-main} can be excluded.

The second time step-size is $\tau=2\pi/(\om_1+\om_6+\om_7)\approx 0.4212$. It is chosen in such a way that it does not satisfy the non-resonance condition \eqref{eq-nonres-gesamt}. The resulting numerical resonance becomes apparent in Figure \ref{fig-res}, where we observe in particular an exchange among the first, sixth and seventh mode. For better visibility, we have plotted $\max_{m=0,\dots,99}E_l^{n+m}$ for $n=0,100,200,\ldots$ instead of $E_l^n$ for $n=0,1,2,\ldots$ in Figure \ref{fig-res}. In the same figure, a similar behaviour can be observed for the resonant time step-size $\tau=2\pi/(-\om_1+\om_6+\om_7)$.

\begin{figure}
\, \includegraphics{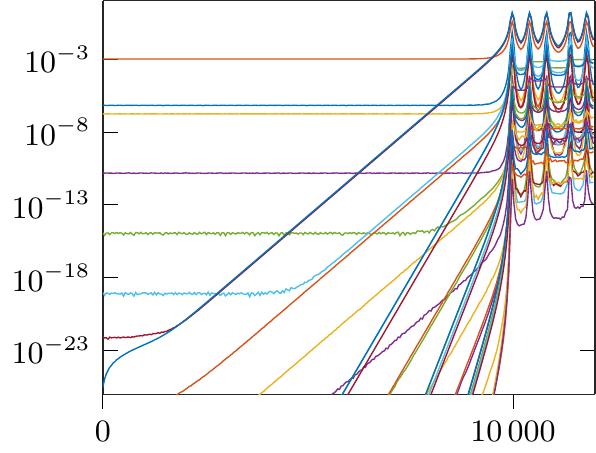} \hfill \includegraphics{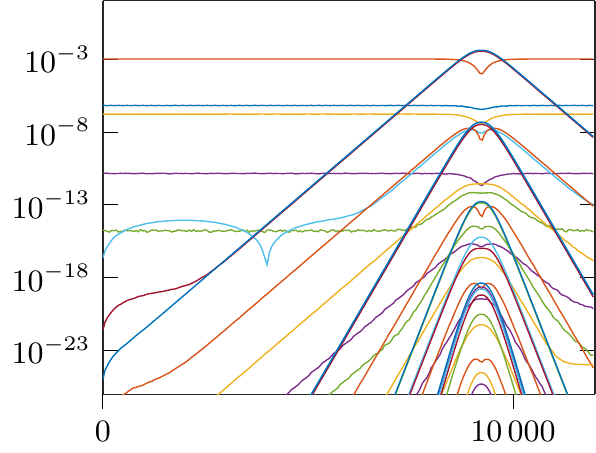} \,
%\, \input{codes_paper_v2/fig_res_1.tikz} \hfill \input{codes_paper_v2/fig_res_2.tikz} \,
\caption{Mode energies $E_l^n$ vs.\ time $t_n$ for the numerical solutions with time step-size $\tau=2\pi/(\om_1+\om_6+\om_7)$ (left) and time step-size $\tau=2\pi/(-\om_1+\om_6+\om_7)$ (right).}\label{fig-res}
\end{figure}

\section{Modulated Fourier expansions: Proof of Theorem~\ref{thm-main}}\label{sec-mfe}

We assume that the non-resonance conditions \eqref{eq-nonres-gesamt} holds for some $0\le\nu<\frac12$. 

\subsection{Approximation ansatz}

We will approximate the numerical solution by a \emph{modulated Fourier expansion},
\begin{equation}\label{eq-mfe}
u_j^n \approx \widetilde{u}_j^n = \sum_{\fk\in\calK_j} z_j^\fk \bigl(\ee^{\nu/2} t_n\bigr)\e^{\iu(\fk\cdot \fomega)t_n}, \qquad j=-M,\dots,M-1,
\end{equation}
where $\calK_j = \{ \fk : (j,\fk)\in\calK \}$ with the set $\calK$ of
\eqref{eq-calK}. We require that the \emph{modulation functions} $z_j^\fk$ are
polynomials. In contrast to the modulated Fourier expansion of \cite[Equation
  (7)]{Gauckler2012} for the exact solution, the modulation functions
considered here vary on a slow time-scale $\ee^{\nu/2}t$, where $\nu$ is the
parameter from the non-resonance condition~\eqref{eq-nonres-gesamt}. With this slow time scale, derivatives of $z_j^\fk(\ee^{\nu/2}t)$ with respect to $t$ carry additional factors $\ee^{\nu/2}$, which will be used to compensate for this factor in the weak non-resonance condition \eqref{eq-nonres-weak}.

Having in mind that $u_j^n$, $j=-M,\dots,M-1$, are the Fourier coefficients of a trigonometric polynomial of degree $M$, we write in the following $u_M^n=u_{-M}^n$ and $z_{M}^\fk=z_{-M}^\fk$.

Inserting the ansatz \eqref{eq-mfe} into the trigonometric integrator \eqref{eq-method-main} and
comparing the coefficients of $\e^{\iu(\fk\cdot\fomega)t}$, yields a set of
equations for the modulation functions $\fz = (z_j^\fk)_{(j,\fk)\in\calK}$, for which solutions are to be constructed up to a small defect $d_j^\fk$:
\begin{equation}\label{eq-modsystem}\begin{split}
\e^{\iu(\fk\cdot \fomega)\tau} z^{\fk}_j\bigl(\ee^{\nu/2}(t+\tau)\bigr) &-2\cos (\tau \omega_j) z^{\fk}_j\bigl(\ee^{\nu/2}t\bigr) + \e^{-\iu(\fk\cdot \fomega)\tau} z^{\fk}_j\bigl(\ee^{\nu/2}(t-\tau)\bigr)\\
 & = -\tau^2\psi(\tau\omega_j) \nabla_{-j}^{-\fk} \calU\bigl(\fPhi\fz(\ee^{\nu/2}t)\bigr) + \tau^2\psi(\tau\omega_j)d_j^\fk\bigl(\ee^{\nu/2}t\bigr)
\end{split}\end{equation}
with the extended potential
\begin{equation}\label{eq-calU}
\calU(\fz) = - \frac13 \sum_{j_1+j_2+j_3\equiv 0} \sum_{\fk^1+\fk^2+\fk^3=\mathbf{0}} z_{j_1}^{\fk^1} z_{j_2}^{\fk^2} z_{j_3}^{\fk^3},
\end{equation}
with $\fPhi\fz = (\phi(\tau\omega_j) z_j^\fk)_{(j,\fk)\in\calK}$ and with $\nabla_{-j}^{-\fk}$ denoting the partial derivative with respect to $z_{-j}^{-\fk}$. In \eqref{eq-calU} and in the following we denote by $\equiv$ the congruence modulo $2M$.

With the modulated Fourier expansion \eqref{eq-mfe} at hand, the formula \eqref{eq-method-velocity} for the velocity approximation leads to an approximation of $\dot{u}_j^n$ by a modulated Fourier expansion:
\begin{equation}\label{align:dot-u-mfe}\begin{split}
\dot{u}_j^n \approx \dot{\widetilde{u}}_j^n = \bigl( 2\tau \sinc(\tau\omega_j) \bigr)^{-1} \sum_{\fk\in\calK_j} \Bigl( z_j^\fk&(\ee^{\nu/2}t_{n+1}) \e^{\iu (\fk\cdot\fomega) \tau}\\ &- z_j^\fk(\ee^{\nu/2}t_{n-1}) \e^{-\iu (\fk\cdot\fomega) \tau} \Bigr)\e^{\iu (\fk\cdot\fomega) t_n}.
\end{split}\end{equation}

Finally, we get conditions from the fact that the ansatz \eqref{eq-mfe} should satisfy the initial condition:
\begin{subequations}\label{eq-modsystem-init}
\begin{align}
u_j^0 &= \sum_{\fk\in\calK_j} z_j^{\fk}(0),\label{eq-modsystem-init1}\\
\dot{u}_j^0 &= \bigl( 2\tau \sinc(\tau\omega_j) \bigr)^{-1} \sum_{\fk\in\mathcal{K}_j} 
\Bigl(
z_j^\fk(\ee^{\nu/2}\tau) \e^{\iu (\fk\cdot\fomega)\tau} - 
z_j^\fk(-\ee^{\nu/2}\tau) \e^{-\iu (\fk\cdot\fomega)\tau} 
\Bigr),\label{eq-modsystem-init2}
\end{align}
\end{subequations}
where \eqref{align:dot-u-mfe} was used for the derivation of the second equation.

\subsection{Norms}\label{subsec-norms}

For vectors $\fv=(v_{-M},\dots,v_{M-1})$ of Fourier coefficients of trigonometric polynomials of degree $M$, we consider the norm
\[
\norm{\fv} = \klabigg{\sum_{j=-M}^{M-1} \si_j \ee^{-2e(j)\nu} \abs{v_j}^2}^{1/2},
\]
where $e(j)$ is the energy profile of \eqref{eq-e} and $\si_j$ are the weights of \eqref{eq-sigma}.
For $\nu=0$, this is the Sobolev $H^s$-norm of the corresponding trigonometric polynomial $\sum_{j=-M}^{M-1} v_j \e^{\iu jx}$. 
The rescaling $\ee^{-2e(j)\nu}$ originates from the non-resonance condition \eqref{eq-nonres-weak} that introduces $\ee^{\nu/2}$.

We will make frequent use of the fact that this norm behaves well with respect to the discrete convolution,
\begin{equation}\label{eq-algebra}
\norm{\fu \ast \fv } \le C \norm{\fu} \, \norm{\fv}.
\end{equation}
This follows from the corresponding property of the Sobolev $H^s$-norm,
see, for example, \cite[Lemma 4.2]{Hairer2008}, and the fact that $e(j)\le e(j_1)+e(j_2)$ for $j\equiv j_1+j_2$. Similarly, we also have
\begin{equation}\label{eq-algebra2}
\norm{\fOmega(\fu \ast \fv) } \le C \norm{\fOmega\fu} \, \norm{\fOmega\fv}.
\end{equation}
In particular, we have for $\nu=0$ in \eqref{eq-algebra} 
\begin{equation}\label{eq-algebra-basic}
\sum_{j=-M}^{M-1} \si_j \absbigg{ \sum_{j_1+j_2\equiv j} u_{j_1}v_{j_2} }^2 \le C^2 \biggl( \sum_{j_1=-M}^{M-1} \si_{j_1} \abs{u_{j_1}}^2 \biggr) \biggl( \sum_{j_2=-M}^{M-1} \si_{j_2} \abs{v_{j_2}}^2 \biggr) .
\end{equation}

\subsection{The modulated Fourier expansion on a short time interval}

Instead of initial mode energies \eqref{eq-init}, we consider the more general choice 
\begin{equation}\label{eq-modeenergies-init}
\sum_{l=0}^M \sigma_l \ee^{-e(l)} E_{l}^{0} \le C_0
\end{equation}
of initial mode energies, 
which is the expected situation at later times (see Theorem~\ref{thm-main}). 
We then have the following discrete counterpart of \cite[Theorem 3]{Gauckler2012}, whose proof will be given in Section~\ref{sec-proof1}. 

\begin{theorem}\label{thm-3}
Under the assumptions of Theorem \ref{thm-main}, but with \eqref{eq-modeenergies-init} instead of \eqref{eq-init}, the numerical solution $\fu^n=(u_{-M}^n,\dots,u_{M-1}^n)^\transpose$ admits an expansion
\[
u_j^n = \sum_{\fk\in\calK_j} z_j^\fk\klabig{\ee^{\nu/2}t_n} \e^{\iu (\fk\cdot\fomega) t_n} + r_j^n \myfor 0\le t_n=n\tau\le 1,
\]
where $\calK_j=\{\,\fk : (j,\fk)\in\calK \,\}$ with the set $\calK$ of \eqref{eq-calK}, where the coefficient functions $z_j^\fk$ are polynomials satisfying $z_{-j}^{-\fk}=\overline{z_j^\fk}$, and where the remainder term $\fr^n=(r_{-M}^n,\dots,r_{M-1}^n)^\transpose$ and the corresponding remainder term $\dot{\fr}^n=(\dot{r}_{-M}^n,\dots,\dot{r}_{M-1}^n)^\transpose$ in the velocity approximation \eqref{align:dot-u-mfe} are bounded by
\[
\norm{\fOmega \fr^n} + \norm{\dot{\fr}^n} \le C \ee^{K(1-2\nu)} \myfor 0\le t_n=n\tau\le 1
\]
and the defect $d_j(t) = \sum_{\fk\in\calK_j} \abs{d_j^\fk(\ee^{\nu/2}t)}$ in \eqref{eq-modsystem} is bounded by
\[
\norm{\fd(t)} \le C\ee^{K(1-2\nu)} \myfor 0\le t\le 1. 
\]
The constant $C$ is independent of $\ee$, $\tau$ and $M$, but depends on $K$, on $\nu$ and $\gamma$ of \eqref{eq-nonres-gesamt}, on $\rho$ of \eqref{eq-nlw}, on $s$ of \eqref{eq-sigma} and on $C_0$ of \eqref{eq-modeenergies-init}.
\end{theorem}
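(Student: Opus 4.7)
The plan is to construct modulation functions $z_j^\fk$ that solve the modulation system \eqref{eq-modsystem} up to a small defect $\fd$, match the initial data \eqref{eq-modsystem-init}, and then control the remainder $\fr^n$ by a discrete Gronwall argument over the $\mathcal{O}(\tau^{-1})$ steps in $[0,1]$. I would follow the template of \cite[Theorem~3]{Gauckler2012} but carefully track the two new features: the aliased convolution modulo $2M$ in the extended potential \eqref{eq-calU}, and the slow time-scale $\ee^{\nu/2}t$ introduced precisely to absorb the $\ee^{\nu/2}$ loss from the weak non-resonance bound \eqref{eq-nonres-weak}.

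First I would expand the shifts $z_j^\fk(\ee^{\nu/2}(t\pm\tau))$ as Taylor series in $\tau\ee^{\nu/2}$ about $\ee^{\nu/2}t$ and use the trigonometric identity
\[
\e^{\iu(\fk\cdot\fomega)\tau} - 2\cos(\tau\omega_j) + \e^{-\iu(\fk\cdot\fomega)\tau} = 4\sin\bigl(\tfrac12\tau(\omega_j+\fk\cdot\fomega)\bigr)\sin\bigl(\tfrac12\tau(\omega_j-\fk\cdot\fomega)\bigr)
\]
to rewrite the principal part of the left-hand side of \eqref{eq-modsystem} as the product-of-sines factor times $z_j^\fk$, with higher-order corrections $\tau\ee^{\nu/2}\dot z_j^\fk$, $\tau^2\ee^\nu \ddot z_j^\fk,\ldots$ For $(j,\fk)\in\calK$ with $\fk\ne\pm\jvec$, the non-resonance condition \eqref{eq-nonres-gesamt} gives a lower bound on the product of sines of the form $\gamma^2\tau^2\ee^{\alpha}$, with $\alpha\in\{0,\nu/2,\nu\}$ depending on whether the strong bound \eqref{eq-nonres-strong} or the weak one \eqref{eq-nonres-weak} is active for each factor. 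The $\tau^2$ cancels the $\tau^2$ on the right-hand side of \eqref{eq-modsystem}, while the $\ee^{\alpha}$-loss is absorbed into the rescaling $\ee^{-2e(j)\nu}$ of the weighted norm of Section~\ref{subsec-norms}, and the equation becomes an algebraic fixed-point problem for $z_j^\fk$. For the resonant indices $\fk=\pm\jvec$ one sine vanishes identically; after dividing by the $\dot z$-coefficient $2\iu\tau\ee^{\nu/2}\sin(\tau\omega_j)$, the leading equation degenerates into an ODE in slow time for $z_j^{\pm\jvec}$, to be solved on $[0,\ee^{\nu/2}]$.

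The whole construction is organised by orders of $\ee^{1/2}$: one starts from an initial guess of size $\ee^{1/2}$ supported on $j=\pm 1$, $\fk=\pm\jvec$, and at each step substitutes the current approximation into $\nabla^{-\fk}_{-j}\calU(\fPhi\fz)$. The algebra property \eqref{eq-algebra} of the weighted norm together with the subadditivity of the weight $\mu$ of \eqref{eq-mu} under the convolution constraints $j_1+j_2+j_3\equiv 0\pmod{2M}$ and $\fk^1+\fk^2+\fk^3=\mathbf{0}$ in \eqref{eq-calU} ensures that each iterate gains one factor of $\ee^{1/2}$. After $K$ steps the truncated formal series produces a defect $\fd$ of size $\ee^{K(1-2\nu)}$, and the residual freedom in the initial values $z_j^{\pm\jvec}(0)$ is pinned down by a Banach fixed-point argument matching \eqref{eq-modsystem-init}. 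The remainder $\fr^n$ then obeys the linearisation of \eqref{eq-method-main} around the modulated Fourier expansion with inhomogeneity $\tau^2\fPsi\fd$, and a discrete Gronwall estimate over the finite number of steps in $[0,1]$, using \eqref{eq-algebra2} once more, yields $\norm{\fOmega\fr^n}+\norm{\dot\fr^n}\le C\ee^{K(1-2\nu)}$.

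The main obstacle, I expect, is the interaction between the aliased convolution and the structure of $\calK$: one must verify that whenever $\nabla^{-\fk}_{-j}\calU$ produces a contribution involving an aliased large-frequency mode, the resulting index pair $(j,\fk)$ still lies in $\calK$ — this is exactly why $\calK$ in \eqref{eq-calK} contains the second piece with a single large frequency $\pm\skla{(j-r)\bmod 2M}$. Balancing, in a single accounting, the four small quantities $\ee^{1/2}$ (iteration order), $\ee^{\nu/2}$ (slow time), $\ee^{-\nu/2}$ (weak non-resonance loss) and $\ee^{-e(l)\nu}$ (norm rescaling) so that they consolidate into the final rate $\ee^{K(1-2\nu)}$ is the bookkeeping heart of the argument; once this is in place, the remainder estimate and the initial-value fixed-point step proceed essentially as in the continuous case of \cite{Gauckler2012}.
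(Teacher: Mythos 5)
Your proposal follows essentially the same route as the paper's proof: an ansatz for the modulation functions expanded in powers of $\ee^{(1-2\nu)/2}$ on the slow time scale, the product-of-sines identity combined with the non-resonance condition \eqref{eq-nonres-gesamt} to invert the off-diagonal equations algebraically and to reduce the diagonal ones ($\fk=\pm\jvec$) to slow-time ODEs whose initial values are fixed by \eqref{eq-modsystem-init}, followed by a one-step-formulation/discrete-Gronwall argument for the remainder. The only correction needed is the truncation order: since the nonlinearity is quadratic, the expansion must be carried to order $2K-1$ in powers of $\ee^{1/2}$ (not $K$ as you state), so that the first neglected interactions enter at order $\de^{2K}$ and produce the defect bound $\ee^{K(1-2\nu)}$.
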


\subsection{Almost-invariant energies}

We now derive almost-invariant energies of the modulation system \eqref{eq-modsystem} which enable us to consider long time intervals. The derivation of these almost-invariant energies is similar as in the case of the exact solution, see \cite[Section 3.5]{Gauckler2012}, but it now leads to discrete almost-invariant energies that involve additionally the time step-size $\tau$ and are related to those of \cite{Cohen2008a}.

Let 
\begin{equation}\label{eq-calE}
 \calE_l(t) = - \frac{\iu}{2}\sum_{j=-M}^{M-1}\sum_{\fk\in\calK_j} \frac{k_l\omega_l}{\tau\sinc (\tau\omega_j)} \, z_{-j}^{-\fk}\bigl(\ee^{\nu/2}t\bigr) \e^{\iu(\fk\cdot \fomega)\tau} z^{\fk}_j\bigl(\ee^{\nu/2}(t+\tau)\bigr).
\end{equation}
Our aim is to prove that
\begin{equation}\label{align:FastInvarianz}
\calE_l(t) = \calE_l(t-\tau) - \frac{\iu}2 \sum_{j=-M}^{M-1} \sum_{\fk\in\calK_j} \tau k_l\om_l \phi(\tau\omega_j) z_{-j}^{-\fk}\bigl(\ee^{\nu/2}t\bigr) d_j^\fk\bigl(\ee^{\nu/2}t\bigr),
\end{equation}
which shows that, for small defect $d_j^\fk$, the quantity $\calE_l$ is almost invariant. To see this almost-invariance, we use, as in the case of the exact solution, that the extended potential $\calU$ of \eqref{eq-calU} is invariant under the transformation $\calS_\flambda(\theta)\fz:=(\e^{\iu (\fk\cdot\flambda)\theta}z_j^\fk)_{(j,\fk)\in\calK}$ for $\theta\in\R$ and real vectors $\flambda=(\lambda_0,\dots,\lambda_M)$. This invariance shows that
\[
0 = \frac{\drm}{\drm \theta}\biggr|_{\theta=0} \calU\bigl(\calS_\flambda(\theta)\fPhi\fz\bigr)  = -\sum_{j=-M}^{M-1}\sum_{\fk\in\calK_j} \iu(\fk\cdot\flambda) \phi(\tau\omega_j) z_{-j}^{-\fk} \nabla_{-j}^{-\fk}\calU(\fPhi\fz).
\]
We then multiply this equation with $\tau/2$, we replace $-\nabla_{-j}^{-\fk}\calU$ with the help of \eqref{eq-modsystem}, and we use the symplecticity \eqref{eq-symplectic} of the method. This yields
\begin{align*}
0 &= \frac{\iu}{2} \sum_{j=-M}^{M-1}\sum_{\fk\in\calK_j} \frac{\fk\cdot\flambda}{\tau\sinc(\tau\omega_j)} \, z_{-j}^{-\fk}\bigl(\ee^{\nu/2}t\bigr) \klaBig{ \e^{\iu(\fk\cdot \fomega)\tau} z^{\fk}_j\bigl(\ee^{\nu/2}(t+\tau)\bigr)\\ & \qquad -2\cos (\tau \omega_j) z^{\fk}_j\bigl(\ee^{\nu/2}t\bigr)
 + \e^{-\iu(\fk\cdot \fomega)\tau} z^{\fk}_j\bigl(\ee^{\nu/2}(t-\tau)\bigr) - \tau^2\psi(\tau\omega_j)d_j^\fk\bigl(\ee^{\nu/2}t\bigr) }. 
\end{align*}
In this relation, we use that
\begin{equation}\label{eq-derivation-calE}
\sum_{j=-M}^{M-1} \sum_{\fk\in\calK_j} \frac{\fk\cdot\flambda}{\sinc (\tau\omega_j)} ß, z_{-j}^{-\fk}\bigl(\ee^{\nu/2}t\bigr) \cos (\tau \omega_j) z^{\fk}_j\bigl(\ee^{\nu/2}t\bigr) = 0,
\end{equation}
which follows from the symmetry in $j$ (recall that $z_{M}^\fk=z_{-M}^\fk$) and the asymmetry in $\fk$. Choosing $\flambda=\lvec$ with $l=0,\dots,M$ and using the definition \eqref{eq-calE} of $\calE_l$, we then end up with \eqref{align:FastInvarianz}. 

Note that, due to the appearance of the factor $k_l$ in $\calE_l$ of \eqref{eq-calE}, only modulation functions $z_j^\fk$ with $k_l\ne 0$ are actually relevant in $\calE_l$ (recall that $k_l$ is the $l$th entry of the vector $\fk$). This will be used all the time in Section~\ref{sec-proof2} below. In particular, we will use there that only very specific vectors $\fk$ with $k_l\ne 0$ for $l\ge K$ appear in the set $\calK$ of \eqref{eq-calK}.

Using a Taylor expansion of $z_j^\fk(\ee^{\nu/2}(t+\tau))$ and the property \eqref{eq-derivation-calE}, we can derive the following alternative form of $\calE_l$:
\begin{align}
\calE_l(t) &= \frac{1}{2} \sum_{j=-M}^{M-1} \sum_{\fk\in\calK_j} \biggl( k_l\omega_l (\fk\cdot\fomega) \frac{\sinc(\tau(\fk\cdot\fomega))}{\sinc (\tau\omega_j)} \, \absbig{z_j^\fk\bigl(\ee^{\nu/2}t\bigr)}^2\label{eq-calE-alt}\\
 &\qquad - \iu \frac{k_l \omega_l}{\sinc (\tau\omega_j)} \, z_{-j}^{-\fk}\bigl(\ee^{\nu/2}t\bigr) \e^{\iu(\fk\cdot \fomega)\tau} \Bigl( \ee^{\nu/2} \dot{z}^{\fk}_j\bigl(\ee^{\nu/2}t\bigr) + \tfrac12 \tau \ee^{\nu} \ddot{z}^{\fk}_j\bigl(\ee^{\nu/2}t\bigr) + \ldots \Bigr) \biggr).\notag
\end{align}
Using \eqref{eq-derivation-calE}, we see that this quantity coincides, for
$\nu=0$ and in the limits $\tau\to 0$ and $M\to\infty$, with the almost-invariant energy
of \cite[Equation (23)]{Gauckler2012} for the exact solution. We therefore
also call $\calE_l(t)$ an \emph{almost-invariant energy}. In Sections \ref{sec-proof1} and \ref{sec-proof2} below, we prove the following discrete counterparts of \cite[Theorems 4--7]{Gauckler2012} for this new almost-invariant energy. 

\begin{theorem}[Almost-invariant energies controlled by mode energies]\label{thm-4}
Under the conditions of Theorem~\ref{thm-3} we have, for $0\le t\le 1$,
\begin{equation}\label{eq-almostinvariantenergies-bound}
\sum_{l=0}^M \sigma_l \ee^{-e(l)} \abs{\calE_l(t)} \le \calC_0,
\end{equation}
where $\calC_0$ is independent of $\ee$, $\tau$ and $M$ and depends on the initial data only through the constant $C_0$ of \eqref{eq-modeenergies-init}.
\end{theorem}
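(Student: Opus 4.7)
The plan is to bound $\sum_{l=0}^M \sigma_l \ee^{-e(l)} \abs{\calE_l(t)}$ uniformly on $0\le t\le 1$ by combining the alternative representation \eqref{eq-calE-alt} of $\calE_l$ with quantitative bounds on the modulation functions $z_j^\fk$ that the construction behind Theorem~\ref{thm-3} must provide. The central observation is that the factor $k_l$ in front of $\calE_l$ restricts the sum to $\fk$ with $k_l\ne 0$, which by definition \eqref{eq-mu} of $\mu$ forces $\mu(\fk)\ge e(l)$. If the modulation functions obey the expected \emph{a priori} bound $\absbig{z_j^\fk(\ee^{\nu/2}t)} = \mathcal{O}(\ee^{\mu(\fk)/2})$, this produces exactly the factor $\ee^{e(l)}$ needed to compensate the weight $\ee^{-e(l)}$ outside.

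First I would extract from the proof of Theorem~\ref{thm-3} weighted estimates of the form
\[
\norm{\fOmega\fz^\fk(\ee^{\nu/2}t)} + \norm{\dot{\fz}^\fk(\ee^{\nu/2}t)} \le C\, \ee^{\mu(\fk)/2} \myforall \fk \mywith (j,\fk)\in\calK,
\]
together with analogous bounds on higher time derivatives, with $C$ depending only on $C_0$. Then I would plug these into \eqref{eq-calE-alt}, splitting $\calE_l$ into the diagonal part $\tfrac12\sum_{j,\fk}k_l\omega_l(\fk\cdot\fomega)\sinc(\tau(\fk\cdot\fomega))/\sinc(\tau\omega_j)\abs{z_j^\fk}^2$ and the derivative terms, the latter carrying explicit small factors $\ee^{\nu/2},\ee^\nu,\dots$ and being of strictly lower order. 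Rewriting $(\fk\cdot\fomega)\sinc(\tau(\fk\cdot\fomega)) = \tau^{-1}\sin(\tau(\fk\cdot\fomega))$ and invoking the lower bound $\absbig{\sin(\tau\omega_j)}\ge\gamma\tau\ee^{\nu/2}$, which follows from \eqref{eq-nonres-weak} applied with $\fk=-\jvec$ and the minus sign (using $\omega_{-j}=\omega_j$), reduces the diagonal part to a sum controlled by $\sum_{j,\fk}\abs{k_l}\,\omega_l\omega_j\abs{z_j^\fk}^2$ times a constant depending on $\gamma$ and $\nu$.

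The last step is to sum against the weight $\sigma_l\ee^{-e(l)}$ and regroup. For the first component of $\calK$ in \eqref{eq-calK} the condition $k_l=0$ for $l\ge K$ together with $\mu(\fk)<2K$ leaves only finitely many admissible $\fk$, so summability is immediate and the $\sigma$-weights are absorbed via the algebra property \eqref{eq-algebra} of the norm from Section~\ref{subsec-norms}. For the second component, where $\fk=\pm\skla{(j-r)\bmod 2M}+\fk'$ with $\abs{(j-r)\bmod 2M}\ge K$, the condition $k_l\ne 0$ either forces $l<K$ (treated as above) or pins $l$ to $\abs{(j-r)\bmod 2M}\ge K$; in the latter case $e(l)=K$ exactly matches the $\ee^K$ factor carried by such $\fk$, and the $l$-sum is bounded by using $\sigma_l \le C\sigma_{j-r}$-type estimates combined with the convolution bound \eqref{eq-algebra}.

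The hard part will be this last regrouping: I need to track how the $\sigma_l$-weight migrates through the convolution structure of the extended potential $\calU$ in \eqref{eq-calU} when the ``big index'' $l\ge K$ is tied to $(j-r)\bmod 2M$, and to verify that for each fixed $l\ge K$ only a uniformly bounded number of pairs $(j,r)$ with $\abs{(j-r)\bmod 2M}=l$ contribute, so that the resulting bound is independent of the spatial discretization parameter $M$. This careful discrete bookkeeping, and its interplay with the rescaling $\ee^{-2e(j)\nu}$ built into the norm, is the essential new difficulty relative to the continuous analysis of \cite{Gauckler2012}.
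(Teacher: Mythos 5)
Your overall strategy -- start from the alternative form \eqref{eq-calE-alt}, use that the factor $k_l$ restricts to $\fk$ with $k_l\ne 0$ and hence to modulation functions of size $\mathcal{O}(\ee^{e(l)/2})$, and treat the derivative terms as lower order -- is the same as the paper's. The summation issue you flag at the end as the ``hard part'' is in fact resolved painlessly in the paper by the single inequality \eqref{eq-sumcalE-aux}, $\sum_{l}\sigma_l\abs{k_l}\om_l\lesssim\sigma_j\ga_j^\fk$ for $\fk\in\calK_j$, which transfers the $l$-weight onto $j$ and lets the $\ell^2$-in-$j$, $\ell^1$-in-$\fk$ bounds of Lemma~\ref{lemma-bounds} absorb everything; no counting of pairs $(j,r)$ is needed.

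The genuine gap is in your treatment of the factor $1/\sinc(\tau\om_j)$ in the \emph{leading} (derivative-free) part of \eqref{eq-calE-alt}. You propose to bound it via the non-resonance condition \eqref{eq-nonres-weak}, which gives $\abs{\sin(\tau\om_j)}\ge\gamma\tau\ee^{\nu/2}$ and hence $1/\abs{\sinc(\tau\om_j)}\le\om_j/(\gamma\ee^{\nu/2})$. This is \emph{not} ``a constant depending on $\gamma$ and $\nu$'': it carries an uncompensated factor $\ee^{-\nu/2}$, and in the leading term there is no spare power of $\ee$ to absorb it -- for the critical indices (e.g.\ $\fk$ off-diagonal with $m(j,\fk)=e(l)$) the powers of $\ee$ match exactly, so your bound degrades to $\calC_0\ee^{-\nu/2}$ and the theorem fails for $\nu>0$. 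The paper's mechanism is different and structural: because the method is symplectic, $\psi=\sinc\cdot\phi$, every \emph{off-diagonal} modulation function $z_j^\fk$ ($\fk\ne\pm\jvec$) inherits a factor $\sinc(\tau\om_j)$ from the right-hand side of \eqref{eq-modsystem-coeff}; this is exactly the content of the strengthened bound \eqref{eq-bound-modfun-sumoffdiag} in Lemma~\ref{lemma-bounds} (note the explicit remark there that this gain ``will be needed later on to deal with this factor in the almost-invariant energies''). For diagonal $\fk=\pm\jvec$ the two sincs in $\sinc(\tau(\fk\cdot\fomega))/\sinc(\tau\om_j)$ cancel identically, so \eqref{eq-bound-modfun-sumall} suffices. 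The non-resonance trick you invoke is reserved in the paper for the derivative terms only, where the explicit factors $\ee^{\nu/2},\ee^{\nu},\dots$ that you correctly identified are available to pay for it. Without the $\sinc$-gain for off-diagonal functions (or some substitute), your estimate of the leading term does not close.
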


\begin{theorem}[Variation of almost-invariant energies]\label{thm-5}
Under the conditions of Theorem~\ref{thm-3} we have, for $0\le t_n\le 1$,
\[
\sum_{l=0}^M \sigma_l \ee^{-e(l)} \absbig{\calE_l(t_n)-\calE_l(0)}  \le C\ee^{K(1-2\nu)/2}
\]
and $\ee^{-e(1)} \abs{\calE_1(t_n)-\calE_1(0)}  \le C\ee^{(K-1)(1-2\nu)/2} \ee^{K(1-2\nu)/2}$, 
where $C$ is independent of $\ee$, $\tau$ and $M$ and depends on the initial data only through the constant $C_0$ of \eqref{eq-modeenergies-init}.
\end{theorem}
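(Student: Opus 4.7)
The starting point is the one-step almost-invariance identity \eqref{align:FastInvarianz}. Iterating it from $0$ to $t_n$ yields
\[
\calE_l(t_n) - \calE_l(0) = -\frac{\iu\tau}{2} \sum_{m=1}^n \sum_{j=-M}^{M-1} \sum_{\fk\in\calK_j} k_l\om_l\,\phi(\tau\om_j)\, z_{-j}^{-\fk}(\ee^{\nu/2}t_m)\, d_j^\fk(\ee^{\nu/2}t_m).
\]
I would multiply by $\sigma_l\ee^{-e(l)}$, sum over $l\in\{0,\dots,M\}$, and interchange the order of summation. The inner $l$-sum collapses into a single scalar $\fk$-dependent weight $W_\fk := \sum_l \sigma_l\ee^{-e(l)}|k_l|\om_l$, so that the problem reduces to bounding
\[
\tau \sum_{m=1}^n \sum_{(j,\fk)\in\calK} W_\fk\, |\phi(\tau\om_j)|\,|z_{-j}^{-\fk}(\ee^{\nu/2}t_m)|\,|d_j^\fk(\ee^{\nu/2}t_m)| \;\le\; C\ee^{K(1-2\nu)/2}.
\]

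Next, I would apply Cauchy--Schwarz in $(j,\fk)$ with auxiliary weights matched to the norm of Section~\ref{subsec-norms}, so that one factor can be dominated by an appropriately weighted norm of the modulation vector (and, whenever needed, by $\|\fOmega\fz\|$ via the algebra property \eqref{eq-algebra2}) and the other by a corresponding norm of the defect vector. Theorem~\ref{thm-3} then provides the crucial defect bound $\|\fd(t)\|\le C\ee^{K(1-2\nu)}$, while the same theorem together with \eqref{eq-modeenergies-init} controls the weighted $\fz$-factor. Because $n\tau\le 1$, the summation in $m$ contributes no more than a uniform constant, and no deterioration in $\tau$ or $M$ arises.

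The delicate point and main obstacle is the control of $W_\fk$ together with the associated modulation-function contributions for $\fk$ lying in the second piece of $\calK$ in \eqref{eq-calK}. There $\fk$ carries a single "large" entry $k_{l^*}\ne 0$ at $l^*=(j-r)\bmod 2M$ for which $\om_{l^*}$ may be of order $M$ and the weight equals $\ee^{-e(l^*)}=\ee^{-K}$. Two saving features make this tractable: only one such large entry occurs, and the remaining entries of $\fk$ obey $\mu(\fk)<K$, so the large frequency can be absorbed using the $\fOmega$-algebra property \eqref{eq-algebra2}, and the factor $\ee^{-K}$ is offset by the smallness of the corresponding modulation function (which encodes at least $K$ interactions with the leading first-mode amplitude). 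A careful conversion between the weight $\sigma_j\ee^{-e(j)}$ appearing in the target estimate and the weight $\sigma_j\ee^{-2e(j)\nu}$ used in the norm of Section~\ref{subsec-norms} is required; this is where the slow-time rescaling by $\ee^{\nu/2}$ in the modulated Fourier ansatz pays off.

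For the refined bound on $|\calE_1(t_n)-\calE_1(0)|$, the same telescoping identity is used, but only vectors $\fk\in\calK_j$ with $k_1\ne 0$ contribute. Inspecting $\calK_j\cap\{k_1\ne 0\}$ shows that each such vector carries extra interactions beyond the leading monomials, so the corresponding modulation functions inherit an additional geometric smallness of order $\ee^{(K-1)(1-2\nu)/2}$; combining this with the previous estimate yields the improved prefactor $\ee^{(K-1)(1-2\nu)/2}\ee^{K(1-2\nu)/2}$.
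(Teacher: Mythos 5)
Your main estimate follows essentially the same route as the paper: telescope the one-step identity \eqref{align:FastInvarianz}, collapse the $l$-sum into a weight controlled by $\sigma_j\gamma_j^\fk$ (this is exactly \eqref{eq-sumcalE-aux}), use $n\tau\le 1$, and close with Cauchy--Schwarz against weighted bounds for the modulation functions and the defect; your observation that $k_l\ne 0$ forces the modulation function to carry at least $\de^{e(l)}$ is the key cancellation of the weight $\ee^{-e(l)}$, as in the paper. One caveat: the coarse defect bound $\norm{\fd(t)}\le C\ee^{K(1-2\nu)}$ from Theorem~\ref{thm-3} carries only the $j$-dependent weight $\de^{-2e(j)\nu}$ and no $\gamma_j^\fk$; for $\nu>0$ and for the second piece of $\calK$ (where the large index $l^*$ satisfies $e(l^*)=K>e(j)$) this is not enough, and you genuinely need the $\fk$-resolved estimate of Lemma~\ref{lemma-defect}, whose weight is $\gamma_j^\fk\de^{-2m(j,\fk)\nu}$ with $m(j,\fk)\ge e(l^*)$.

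The genuine gap is in your justification of the refined bound for $l=1$. You attribute the extra factor $\ee^{(K-1)(1-2\nu)/2}$ to an ``additional geometric smallness'' of the modulation functions with $k_1\ne 0$. That premise is false: the set $\{\fk\in\calK_j: k_1\ne 0\}$ contains $\fk=\pm\skla{1}$ for $j=\pm 1$, and $z_{\pm 1}^{\pm\skla{1}}$ are precisely the leading amplitudes of size $\de=\ee^{1/2}$ with no extra smallness whatsoever. The improvement has a different source: for $k_l\ne 0$ one has $z_{-j}^{-\fk}=\calO(\de^{e(l)})$ by \eqref{eq-bound-modfun-easy} and $d_j^\fk=\calO(\de^{e(l)+(2K-e(l))(1-2\nu)})$ by Lemma~\ref{lemma-defect} (since $m(j,\fk)\ge e(l)$), so each term of the telescoped sum contributes $\de^{-2e(l)}\cdot\de^{e(l)}\cdot\de^{e(l)+(2K-e(l))(1-2\nu)}=\de^{(2K-e(l))(1-2\nu)}$ to $\de^{-2e(l)}\abs{\calE_l(t_n)-\calE_l(0)}$. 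The generic bound $\de^{K(1-2\nu)}$ corresponds to the worst case $e(l)=K$; for $l=1$ one has $e(1)=1$ and hence the exponent $(2K-1)(1-2\nu)=(K-1)(1-2\nu)+K(1-2\nu)$. In short, the gain is pure arithmetic in $e(l)$ coming from the defect estimate, not a structural smallness of the modulation functions indexed by $k_1\ne 0$; as written, your argument for this part would not go through.
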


At time $t=1$, for which we assume without loss of generality that $t_N=N\tau=1$ for some $N$, we consider a new modulated Fourier expansion leading to new almost-invariant energies. 

\begin{theorem}[Transitions in the almost-invariant energies]\label{thm-6}
Let the conditions of Theorem~\ref{thm-3} be fulfilled. Let $\fz(\ee^{\nu/2} t)=(z^{\fk}_j(\ee^{\nu/2} t))_{(j,\fk)\in\calK}$ for $0\le t \le 1$ be the coefficient functions as in Theorem \ref{thm-3} with corresponding almost-invariant energies $\calE_l(t)$ for initial data $(\fu^0,\dot{\fu}^0)$. Let further $\widetilde\fz(\ee^{\nu/2} t)=({\widetilde z}^{\fk}_j(\ee^{\nu/2} t))_{(j,\fk)\in\calK}$ be the coefficient functions and $\widetilde{\calE}(t)$ the corresponding almost-invariants of the modulated Fourier expansion for $0\le t \le 1$ corresponding to initial data $(\fu^N,\dot{\fu}^N)$ with $t_N=N\tau=1$, constructed as in Theorem \ref{thm-3}.
If $(\fu^N,\dot{\fu}^N)$ also satisfies the bound \eqref{eq-modeenergies-init}, then
\[
\sum_{l=0}^M \sigma_l \ee^{-e(l)} \absbig{ \calE_l(1) - \widetilde{\calE}_l(0) } \le C \ee^{K(1-2\nu)/2}
\]
and 
$\ee^{-e(1)} \abs{\calE_1(1) - \widetilde{\calE}_1(0)}  \le C\ee^{(K-1)(1-2\nu)/2} \ee^{K(1-2\nu)/2}$, 
where $C$ is independent of $\ee$, $\tau$ and $M$ and depends on the initial data only through the constant $C_0$ of \eqref{eq-modeenergies-init}.
\end{theorem}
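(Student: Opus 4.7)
The plan is to compare $\fz$ at the right endpoint of its construction interval with $\widetilde{\fz}$ at the left endpoint of its construction interval by introducing an auxiliary ``twisted shift'' $\widehat{\fz}$ of the first expansion that shares its initial data with $\widetilde{\fz}$ up to a small remainder, and then to transfer this closeness to the almost-invariant energies via the bilinear structure of $\calE_l$ displayed in \eqref{eq-calE-alt}.

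First, I would set $\widehat{z}_j^\fk(s) := z_j^\fk(\ee^{\nu/2}+s)\,\e^{\iu(\fk\cdot\fomega)}$, so that the original expansion \eqref{eq-mfe} evaluated at step index $N+n'$ (that is, at $t=1+n'\tau$) takes the form $\sum_{\fk\in\calK_j}\widehat{z}_j^\fk(\ee^{\nu/2}t_{n'})\,\e^{\iu(\fk\cdot\fomega)t_{n'}}$. Because the extended potential $\calU$ of \eqref{eq-calU} couples only indices with $\fk^1+\fk^2+\fk^3=\mathbf{0}$, a direct computation gives $\nabla_{-j}^{-\fk}\calU(\fPhi\widehat{\fz}(s))=\e^{\iu(\fk\cdot\fomega)}\nabla_{-j}^{-\fk}\calU(\fPhi\fz(\ee^{\nu/2}+s))$, so $\widehat{\fz}$ satisfies the modulation system \eqref{eq-modsystem} on $0\le s\le\ee^{\nu/2}$ with a defect that is just the phase-twisted, time-shifted version of the original defect and, extending the polynomial $\fd$ beyond its original interval, still of $\norm{\cdot}$-size $\ee^{K(1-2\nu)}$. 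Evaluating the original expansion and the velocity formula \eqref{align:dot-u-mfe} at $t_N=1$ then shows that the initial conditions \eqref{eq-modsystem-init} for $\widehat{\fz}$ at $s=0$ agree with those for $\widetilde{\fz}$ (both prescribed by $(\fu^N,\dot{\fu}^N)$) up to $\fr^N$ and $\dot{\fr}^N$, also of size $\ee^{K(1-2\nu)}$.

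The technical core is a Lipschitz-stability estimate for the construction of Theorem~\ref{thm-3}: two families of modulation functions that satisfy the same modulation system \eqref{eq-modsystem} with initial data and defects differing by at most $\delta$ in the weighted norm of Section~\ref{subsec-norms} stay within $\mathcal{O}(\delta)$ throughout $0\le s\le\ee^{\nu/2}$. This will be established by running the fixed-point argument underlying Theorem~\ref{thm-3} for the difference $\widehat{\fz}-\widetilde{\fz}$: the dominant modes $z_{\pm 1}^{\pm\jvec}$ obey a near-linear differential-difference equation, while the remaining components are recovered by algebraic inversions that are non-resonant under \eqref{eq-nonres-gesamt}. Isolating the $(\pm\jvec)$-modes in this comparison, the near-conservation of $E_1$ proved in Theorem~\ref{thm-5} enters to give, on those modes only, the sharper difference bound of size $\ee^{(K-1)(1-2\nu)/2}\ee^{K(1-2\nu)/2}$.

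Finally, the closeness of modulation functions is transferred to the almost-invariants. The alternative form \eqref{eq-calE-alt} displays $\calE_l$ as a bounded bilinear functional of $\fz$ in the norm of Section~\ref{subsec-norms}, so
\[
\calE_l(1)-\widetilde{\calE}_l(0)=B_l\klabig{\widehat{\fz}-\widetilde{\fz},\,\widehat{\fz}+\widetilde{\fz}}
\]
for a bilinear $B_l$ controlled by the algebra estimate \eqref{eq-algebra}. Combining the Lipschitz-stability bounds with the a~priori bound of Theorem~\ref{thm-4} applied to both $\fz$ and $\widetilde{\fz}$ yields the claimed estimate on the full weighted sum, and the sharper $(\pm\jvec)$-estimate delivers the improved bound on $\calE_1-\widetilde{\calE}_1$. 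I expect the main obstacle to be the Lipschitz-stability step: the construction of Theorem~\ref{thm-3} has to be revisited quantitatively, the non-resonance condition \eqref{eq-nonres-gesamt} must be invoked to invert the non-resonant parts of the system without losing powers of $\ee$, and the separation of the conserved $(\pm\jvec)$-modes must be propagated through the resulting estimates in order to extract the improved bound.
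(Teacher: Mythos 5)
Your proposal follows essentially the same route as the paper: the phase-twisted time shift $\widehat{z}_j^\fk(s)=z_j^\fk(\ee^{\nu/2}+s)\e^{\iu(\fk\cdot\fomega)}$ is exactly the object compared with $\widetilde{z}_j^\fk$ in the paper's Lemma~\ref{lemma-interface}, your ``Lipschitz-stability estimate'' is that lemma (the paper realizes it by deriving difference equations for the truncated expansions and inducting on the truncation level, mirroring Lemma~\ref{lemma-bounds}, rather than by a fixed-point argument, but the substance -- non-resonant inversion for the off-diagonal parts, integration plus the initial-value formula \eqref{eq-construction-init} fed with $\fr^N,\dot{\fr}^N$ for the diagonal parts -- is identical), and the transfer to the almost-invariants via the bilinear splitting $a_1a_2-\widetilde a_1\widetilde a_2=(a_1\e^{-\iu(\fk\cdot\fomega)}-\widetilde a_1)a_2\e^{\iu(\fk\cdot\fomega)}+\widetilde a_1(a_2\e^{\iu(\fk\cdot\fomega)}-\widetilde a_2)$ is also the paper's final step.

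The one point where your account deviates is the source of the sharper bound for $l=1$: you invoke Theorem~\ref{thm-5} and near-conservation of $E_1$, but that theorem controls the variation of $\calE_1$ along a single expansion and has no bearing on the interface mismatch between two expansions. In the paper the improvement costs nothing extra: since only $\fk$ with $k_l\ne0$ contribute to $\calE_l$, one has $m(j,\fk)\ge e(l)$, so the difference from Lemma~\ref{lemma-interface} is $\mathcal{O}(\de^{e(l)+(2K-e(l))(1-2\nu)})$ while the companion factor is $\mathcal{O}(\de^{e(l)})$ by \eqref{eq-bound-modfun-easy}; for $e(1)=1$ this power counting already yields $\de^{2e(1)}\de^{(2K-1)(1-2\nu)}$, i.e.\ the stated $\ee^{(K-1)(1-2\nu)/2}\ee^{K(1-2\nu)/2}$ after rescaling. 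Your framework delivers this once the stability estimate is proved with the weights $\ga_j^\fk\de^{-2m(j,\fk)\nu}$ tracked as in the paper; no separate argument for the $(\pm\jvec)$-modes is needed.
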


\begin{theorem}[Mode energies controlled by almost-invariant energies]\label{thm-7}
Let the conditions of Theorem \ref{thm-3} be fulfilled. If the almost-invariant energies satisfy \eqref{eq-almostinvariantenergies-bound} for $0\le t_n=n\tau \le 1$,
then the mode energies are bounded by
\[
\sum_{l=0}^M \sigma_l \ee^{-e(l)} E_{l}^n \le \calC
\]
and $\ee^{-e(1)} \abs{E_1^n - \calE_1(t_n)}\le \calC \ee^{1-2\nu}$, 
where $\calC$ depends on $\calC_0$ in \eqref{eq-almostinvariantenergies-bound},
but is independent of $\ee$, $\tau$, $M$ and $C_0$ of
\eqref{eq-modeenergies-init} if $\ee^{1-2\nu}$ is sufficiently small.
\end{theorem}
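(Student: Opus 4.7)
The plan is to compare $E_l^n$ and $\calE_l(t_n)$ side by side through the modulated Fourier expansion: their leading parts will coincide with $\om_l^2(|z_l^{\lvec}|^2 + |z_l^{-\lvec}|^2)$, so the theorem reduces to bounding a common pool of higher-order correction terms.

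First I would expand $u_l^n$ and $\dot u_l^n$ via \eqref{eq-mfe} and \eqref{align:dot-u-mfe}, separating the $\fk=\pm\lvec$ contributions from the rest. A Taylor expansion of $z_l^{\pm\lvec}(\ee^{\nu/2}t_{n\pm 1})$ about $t_n$, combined with the identity $\sin(\tau\om_l)/(\tau\sinc(\tau\om_l))=\om_l$, reduces the $\fk=\pm\lvec$ part of $\dot u_l^n$ to $\pm\iu\om_l z_l^{\pm\lvec}(\ee^{\nu/2}t_n)\e^{\pm\iu\om_l t_n}$ up to Taylor remainders that carry factors $\ee^{\nu/2}\dot z_l^{\pm\lvec}$ and $\tau\ee^{\nu}\ddot z_l^{\pm\lvec}$. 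Squaring and adding in $E_l^n=\tfrac12|\om_l u_l^n|^2 + \tfrac12|\dot u_l^n|^2$, the oscillatory self-cross term $z_l^{\lvec}\overline{z_l^{-\lvec}}\e^{2\iu\om_l t_n}$ appears with opposite signs in the position and velocity pieces and cancels, leaving the clean leading part $\om_l^2(|z_l^{\lvec}|^2 + |z_l^{-\lvec}|^2)$ plus a correction $R_l^n$ that collects mixed products of $z_l^{\pm\lvec}$ with the other $z_l^\fk$, quadratic products among these other $z_l^\fk$, the Taylor corrections, and the remainders $r_l^n,\dot r_l^n$ from Theorem~\ref{thm-3}.

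Reading off the dominant contribution to $\calE_l$ from \eqref{eq-calE-alt} by summing over the four entries $(j,\fk)\in\{(\pm l,\pm\lvec)\}$---for which the coefficient $k_l\om_l(\fk\cdot\fomega)\sinc(\tau(\fk\cdot\fomega))/\sinc(\tau\om_j)$ reduces to $\om_l^2$---and using the reality relation $z_{-j}^{-\fk}=\overline{z_j^\fk}$ to collapse the four terms, one obtains exactly the same expression $\om_l^2(|z_l^{\lvec}|^2 + |z_l^{-\lvec}|^2)$. The remainder $\widetilde R_l$ on this side consists of the contributions $|z_j^\fk|^2$ for the remaining $(j,\fk)\in\calK$ with $k_l\ne 0$ (with bounded coefficients coming from the sinc ratios) together with the derivative terms in \eqref{eq-calE-alt}, which carry the factor $\ee^{\nu/2}$ from the slow time scale.

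The main task is then to control $R_l^n$ and $\widetilde R_l$ in the weighted norm. Here the bounds on $\fz$ from Theorem~\ref{thm-3}, the algebra inequality \eqref{eq-algebra}, and the inequality $e(j_1)+e(j_2)\ge e(j)$ for $j\equiv j_1+j_2 \bmod 2M$ imply that every mixed quadratic product of modulation functions indexed by some $(j,\fk)$ with $\fk\ne\pm\jvec$ contributes at least an extra factor $\ee^{1-2\nu}$ after multiplication by $\sigma_l\ee^{-e(l)}$ and summation over~$l$, since the constraint $\mu(\fk)<2K$ in \eqref{eq-calK} limits the possible $e$-deficit; the Taylor corrections contribute at least one factor $\ee^{\nu/2}$; and the remainder terms $\fr^n,\dot\fr^n$ contribute only $O(\ee^{K(1-2\nu)})$. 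Together this yields $\sum_l\sigma_l\ee^{-e(l)}E_l^n \le \sum_l\sigma_l\ee^{-e(l)}|\calE_l(t_n)| + \calC' \ee^{1-2\nu}$ for $\ee$ sufficiently small, hence the first estimate, and the sharper bound $\ee^{-e(1)}|E_1^n-\calE_1(t_n)|\le\calC\ee^{1-2\nu}$ by restricting the same analysis to $l=1$, where every correction term is smaller than the leading contribution by at least a factor $\ee^{1-2\nu}$. The main obstacle is precisely this bookkeeping: classifying each $(j,\fk)\in\calK$ by its $\mu$-weight, summing uniformly in $M$ against $\sigma_l\ee^{-e(l)}$, and ensuring that the dependence on the initial-data constant $C_0$ from Theorem~\ref{thm-3} enters only through correction terms multiplied by a positive power of $\ee$, as the statement of the theorem requires.
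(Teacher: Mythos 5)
Your overall architecture is right for the $l=1$ estimate, but the key quantitative claim on which your summed estimate rests is false. You assert that after multiplying by $\sigma_l\ee^{-e(l)}$ and summing, ``every mixed quadratic product of modulation functions indexed by some $(j,\fk)$ with $\fk\ne\pm\jvec$ contributes at least an extra factor $\ee^{1-2\nu}$.'' This fails already for $l=2$: by Example \ref{ex-constr}, the off-diagonal dominant coefficient $z_{2,2}^{2\skla{1}}$ is generically nonzero, so $z_2^{2\skla{1}}=\mathcal{O}(\de^{2})=\mathcal{O}(\ee^{e(2)/1\cdot 1})^{1/1}$ is of exactly the same size $\de^{e(2)}$ as the diagonal part $z_2^{\pm\skla{2}}$, and the contributions $|z_2^{2\skla{1}}|^2$ and the non-cancelling cross terms $z_2^{\pm\skla{2}}\overline{z_2^{2\skla{1}}}$ enter $E_2^n$ at the leading order $\ee^{e(2)}$, not at $\ee^{e(2)+1-2\nu}$. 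These terms are absent from the leading part of $\calE_2$ (there only $(j,\fk)$ with $k_2\ne 0$ occur, and for those the dominant coefficient vanishes unless $\fk=\pm\jvec$ by \eqref{eq-degree} and \eqref{eq-specialzero}), so your inequality $\sum_l\sigma_l\ee^{-e(l)}E_l^n\le\sum_l\sigma_l\ee^{-e(l)}\abs{\calE_l(t_n)}+\calC'\ee^{1-2\nu}$ does not hold. The cancellation of oscillatory cross terms that you invoke happens only for the specific pair $(\lvec,-\lvec)$, not for general pairs of exponentials.

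This is precisely why the paper's proof has a three-lemma structure rather than a direct comparison: Lemma \ref{lemma-calE-diag} shows $\calE_l$ controls the \emph{diagonal} dominant coefficients $z_{l,e(l)}^{\pm\lvec}$ up to a relative $\de^{1-2\nu}$; Lemma \ref{lemma-diag-dom} then controls the \emph{off-diagonal} dominant coefficients $z_{j,e(j)}^{\fk}$ in terms of the diagonal ones through the algebraic construction \eqref{eq-construction-offdiag} (they are quadratic expressions in lower-order, ultimately diagonal, coefficients); and only then does Lemma \ref{lemma-dom-modeen} bound $E_l^n$ by all dominant terms plus genuinely higher-order corrections. Note that you cannot instead bound the off-diagonal dominant terms by Lemma \ref{lemma-bounds}, since its constants depend on $C_0$ of \eqref{eq-modeenergies-init} and these terms carry no compensating positive power of $\ee$; the detour through Lemma \ref{lemma-diag-dom} is exactly what secures the claimed independence of $\calC$ from $C_0$. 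Your treatment of the $l=1$ case and of the Taylor and remainder corrections is consistent with the paper, but the summed estimate needs this additional mechanism.
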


\subsection{From short to long time intervals: Proof of Theorem \ref{thm-main}}

Based on Theorems \ref{thm-3}--\ref{thm-7}, the proof of Theorem \ref{thm-main} is the same as in the case of the exact solution, see \cite[Section 3.6]{Gauckler2012}: Theorems \ref{thm-3}--\ref{thm-5} and \ref{thm-7} yield the statement of Theorem \ref{thm-main} on a short time interval $0\le t_n\le 1$. Theorem \ref{thm-6} can be used to patch many of these short time intervals together, on which the almost-invariant energies $\calE_l$ are still well preserved (Theorems~\ref{thm-5} and \ref{thm-6}) and allow to control the mode energies $E_l$ (Theorem~\ref{thm-7}). A schematic overview of the proof is given in Figure \ref{fig-proof-thm-main}.

\begin{figure}
\centering
\includegraphics{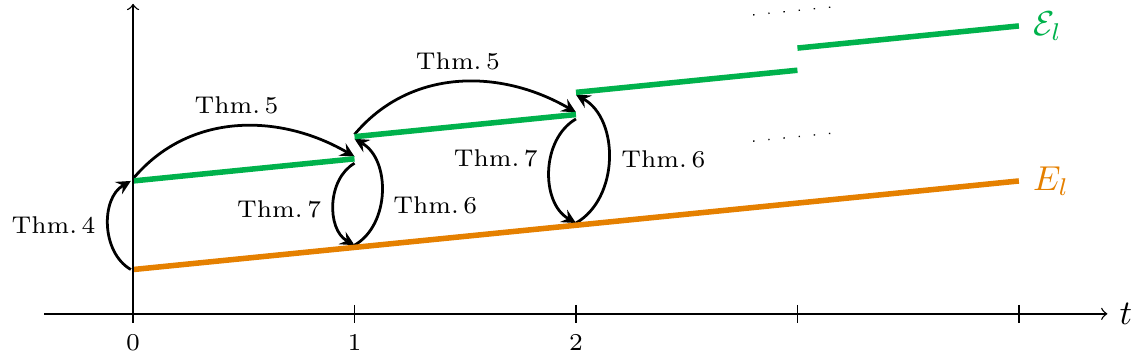}
\caption{A schematic overview of the proof of Theorem \ref{thm-main}: How the results on the almost-invariant energies $\calE_l$ are applied to control the mode energies $E_l$ and vice versa.}\label{fig-proof-thm-main}
\end{figure}

\section{Construction of a modulated Fourier expansion: Proofs of Theorems \ref{thm-3} and \ref{thm-4}}\label{sec-proof1}

Throughout this section, we work under the assumptions of Theorem \ref{thm-3}. In addition, we let 
\[
\de = \ee^{1/2},
\]
such that $u_{j}^0,\dot{u}_{j}^0=\mathcal{O}(\de^{e(l)})$.
Moreover, we write $\lesssim$ for an inequality up to a factor that is independent of $\de = \ee^{1/2}$, $\tau$ and $M$.

\subsection{Expansion of the modulation functions}

The construction of modulation functions $z_j^\fk$ of the modulated Fourier expansion \eqref{eq-mfe} is based on an expansion
\begin{equation}\label{eq-modfunc-exp}\begin{split}
z_{j}^{\fk} &= \delta^{m(j,\fk)} \sum_{m= m(j,\fk)}^{2K-1} \delta^{(m-m(j,\fk))(1-2\nu)} z^{\fk}_{j,m}\\
 &= \delta^{2m(j,\fk)\nu} \sum_{m= m(j,\fk)}^{2K-1} \delta^{m(1-2\nu)} z^{\fk}_{j,m}
\end{split}\end{equation}
with polynomials $z_{j,m}^\fk=z_{j,m}^\fk(\de^\nu t)$ and with
\[
m(j,\fk) = \max\Bigl(e(j),\max_{l \colon k_l\ne 0} e(l) \Bigr).
\]
This can be motivated by the fact that on the one hand we expect
$z_j^\fk=\calO(\de^{m(j,\fk)})$ from the analysis of the exact solution in
\cite{Gauckler2012}, but on the other hand we do not expect an expansion in true powers of $\de$ as in \cite{Gauckler2012} because of the non-resonance condition \eqref{eq-nonres-gesamt} involving $\de^{\nu}$.

Our goal now is to derive equations for the modulation coefficient functions $z_{j,m}^\fk$ in the ansatz \eqref{eq-modfunc-exp}.

Inserting this ansatz into \eqref{eq-modsystem}, expanding $z_j^\fk(\de^\nu(t\pm\tau))$ in a Taylor series and requiring that powers of $\de^{1-2\nu}$ agree on both sides
yields, neglecting the defect $d_j^k$,
\begin{equation}\label{eq-modsystem-coeff}\begin{split}
4s_{\langle j\rangle-\fk}s_{\langle j\rangle+\fk}  &z^{\fk}_{j,m} + 2\iu\tau \delta^{\nu} s_{2\fk} \dot{z}^{\fk}_{j,m} + \tau^2 \delta^{2\nu} c_{2\fk} \ddot{z}^{\fk}_{j,m} +\hdots 	\\
& = \tau^2\psi(\tau\omega_j) \sum_{\fk^1+\fk^2=\fk} \sum_{j_1+j_2\equiv j} \delta^{2(m(j_1,\fk^1)+m(j_2,\fk^2)-m(j,\fk))\nu}\\
&\qquad\qquad\qquad\qquad\qquad \sum_{m_1+m_2=m} \phi(\tau\omega_{j_1}) z^{\fk^1}_{j_1,m_1} \phi(\tau\omega_{j_2}) z^{\fk^2}_{j_2,m_2}.
\end{split}\end{equation}
Here, we use the notation $s_\fk=\sin(\frac{\tau}{2} \fk\cdot\fomega)$ and $c_\fk=\cos(\frac{\tau}{2} \fk\cdot\fomega)$ and the fact that $\cos x- \cos y = 2\sin(\frac{y-x}{2})\sin(\frac{y+x}{2})$. 
All functions in \eqref{eq-modsystem-coeff} are evaluated at $\de^\nu t$ and dots on the functions $z_{j,m}^\fk$ denote
derivatives with respect to the slow time scale $\de^\nu t$.
Note that $m(j,\fk) \le m(j_1,\fk^1)+m(j_2,\fk^2)$ if $\fk=\fk^1+\fk^2$ and $j\equiv j_1+j_2\bmod{2M}$, and hence the power of $\de$ on the right-hand side of \eqref{eq-modsystem-coeff} is small. We recall that $\equiv$ denotes the congruence modulo $2M$.

\begin{remark}
For $\nu=0$ we recover in \eqref{eq-modsystem-coeff}, after division by $\tau^2$ and in the limit $\tau\to 0$, the system of equations which was used in the case of the exact solution, see \cite[Equation (28)]{Gauckler2012}.
For $\nu>0$, the above construction becomes significantly more involved than there. The reason is that the non-resonance condition \eqref{eq-nonres-weak} only allows us to bound the factor $s_{\langle j\rangle-\fk}s_{\langle j\rangle+\fk}$ on the left-hand side of \eqref{eq-modsystem-coeff} from below by $\gamma^2 \tau^2 \de^{2\nu}$. As we will see in the proof of Lemma \ref{lemma-bounds} below, we can compensate this possibly small factor with an additional $\de^{2\nu}$ on the right-hand side (together with $\tau^2$), which we gain from the special choice \eqref{eq-modfunc-exp} as ansatz for $z_j^\fk$. 
\end{remark}

In addition to \eqref{eq-modsystem-coeff}, we get from condition \eqref{eq-modsystem-init} that
\begin{subequations}\label{eq-modsystem-coeff-init}
\begin{align}
&\sum_{\fk\in\mathcal{K}_j} \de^{2(m(j,\fk)-e(j))\nu} z_{j,m}^\fk(0) = \begin{cases}
\de^{-e(j)} u_{j}^0, & m = e(j),\\
0, & \text{else,}
\end{cases}\\
&\sum_{\fk\in\mathcal{K}_j} \de^{2(m(j,\fk)-e(j))\nu} \Bigl( 2\iu s_{2\fk} z_{j,m}^\fk(0) + 2\tau\delta^{\nu} c_{2\fk} \dot{z}_{j,m}^\fk(0) + \iu \tau^2\delta^{2\nu} s_{2\fk} \ddot{z}_{j,m}^\fk(0) + \ldots \Bigr)\notag\\
&\qquad\qquad\qquad\qquad\qquad\qquad = \begin{cases}
2\tau \sinc(\tau \omega_j) \de^{-e(j)} \dot{u}_{j}^0, & m = e(j),\\
0, & \text{else,}
\end{cases}
\end{align} 
\end{subequations}
where we use again a Taylor expansion of the modulation functions.

\subsection{Construction of modulation functions}

We construct polynomial modulation functions of the form \eqref{eq-modfunc-exp} by solving \eqref{eq-modsystem-coeff} and \eqref{eq-modsystem-coeff-init} consecutively for $m=1,2,\dots,2K-1$. For convenience, we set $z_{j,m'}^\fk=0$ for $m'<m(j,\fk)$. Assuming that we have computed all polynomials $z_{j,m'}^\fk$ for $m'<m$, the construction relies on the observation that only already computed functions $z_{j',m'}^{\fk'}$ appear on the right-hand side of \eqref{eq-modsystem-coeff}. This equation is thus, for $z(s)=z_{j,m}^\fk(s)$, of the form
\begin{equation}\label{eq-construction}
\alpha_0 z(s) - \alpha_1 \dot{z}(s) - \alpha_2 \ddot{z}(s) - \dots - \alpha_L z^{(L)}(s) = p(s)
\end{equation}
with coefficients $\al_0,\dots,\al_L$ and a polynomial $p$.

For $\fk\in\calK_j$ with $\fk\ne\pm\jvec$, the coefficient $\al_0$ in this equation is nonzero (by the non-resonance condition \eqref{eq-nonres-gesamt}), and the unique polynomial solution of this equation is given by
\begin{equation}\label{eq-construction-offdiag}
z(s) = \sum_{k=0}^{\deg(p)} \biggl( \frac{\alpha_1}{\al_0} \frac{\drm}{\drm s} + \frac{\alpha_2}{\al_0} \frac{\drm^2}{\drm s^2} + \dots + \frac{\alpha_L}{\al_0} \frac{\drm^L}{\drm s^L} \biggr)^k \frac{1}{\al_0} \, p(s).
\end{equation}
The modulation coefficient functions constructed in this way are called off-diagonal modulation coefficient functions.

For $\fk=\pm\jvec$, the coefficient $\al_0$ is zero, and the polynomial solutions of~\eqref{eq-construction} are given by
\begin{subequations}\label{eq-construction-diag}
\begin{equation}\label{eq-construction-diag-a}
z(s) = z(0) + \int_0^s \dot{z}(\si) \,\drm \si
\end{equation}
with
\begin{equation}\label{eq-construction-diag-der}
\dot{z}(s) = \sum_{k=0}^{\deg(p)} \biggl( \frac{\alpha_2}{\al_1} \frac{\drm}{\drm s} + \dots + \frac{\alpha_L}{\al_1} \frac{\drm^{L-1}}{\drm s^{L-1}} \biggr)^k \frac{(-1)^{k+1}}{\al_1} \, p(s).
\end{equation}
\end{subequations}
In \eqref{eq-construction-diag-a}, the initial value $z(0)$ is still a free parameter. We use
\eqref{eq-modsystem-coeff-init} to fix it. Adding and subtracting the two equations of \eqref{eq-modsystem-coeff-init}, after multiplying the first equation with $2\iu\om_j$ and the second one with $\om_j/s_{2\jvec}=1/(\tau\sinc(\tau\om_j))$, gives us
\begin{align}
2\iu \om_j z_{j,m}^{\pm\jvec}(0) &= - \iu \!\! \sum_{\substack{\fk\in\calK_j\\ \fk\ne\jvec, \, \fk\ne-\jvec}} \!\! \de^{2(m(j,\fk)-e(j))\nu} \biggl( \om_j \pm (\fk\cdot\fomega) \frac{\sinc(\tau(\fk\cdot\fomega))}{\sinc(\tau\om_j)} \biggr) z_{j,m}^\fk(0)\notag\\
 &\qquad \mp \sum_{\fk\in\calK_j} \frac{\de^{2(m(j,\fk)-e(j))\nu}}{\sinc(\tau\om_j)} \Bigl( \de^\nu c_{2\fk} \dot{z}_{j,m}^\fk(0) + \tfrac{\iu}2 \tau\de^{2\nu} s_{2\fk} \ddot{z}_{j,m}^\fk(0) + \dots \Bigr)\notag\\
 &\qquad + \de^{-e(j)} \begin{cases} \iu \om_ju_{j}^0 \pm \dot{u}_{j}^0, & m = e(j),\\ 0, & \text{else.} \end{cases} \label{eq-construction-init}
\end{align}
Note again that this equation becomes for $\nu=0$ and in the limit $\tau\to 0$ the corresponding equation for the exact solution, see \cite[Equation (31)]{Gauckler2012}. The modulation coefficient functions constructed with \eqref{eq-construction-diag} and \eqref{eq-construction-init} are called diagonal modulation coefficient functions.

%\bred Vorschlag f\"ur weitere ``Veranschaulichung'':

\begin{example}\label{ex-constr} 
We illustrate the above construction for the first two nontrivial values of $m$. For simplicity, we restrict to the case $\nu=0$ and $\phi=1$ (and hence $\psi=\sinc$ by \eqref{eq-symplectic}).

For $m=1$, we observe that the polynomial $p$ in \eqref{eq-construction}, which is the right-hand side of \eqref{eq-modsystem-coeff}, vanishes for all $j$ and all $\fk$. The above construction \eqref{eq-construction-offdiag} thus yields
\[
z_{j,1}^\fk \equiv 0 \myfor \fk\ne \pm\jvec,
\]
and \eqref{eq-construction-diag-der} yields $\dot{z}_{j,1}^{\pm\jvec}\equiv 0$. The computation of the initial values for $z_{j,1}^{\pm\jvec}$ with \eqref{eq-construction-init} finally yields 
\[
z_{j,1}^{\pm\jvec} \equiv 0 \myfor \abs{j}\ne 1
\]
and
\[
z_{1,1}^{\pm\skla{1}} \equiv \tfrac1{2\iu\om_1} \de^{-1} \bigl(\iu\om_1 u_{1}^0 \pm \dot{u}_{1}^0\bigr), \qquad z_{-1,1}^{\pm\skla{1}} \equiv \tfrac1{2\iu\om_1} \de^{-1} \bigl(\iu\om_1 u_{-1}^0 \pm \dot{u}_{-1}^0\bigr)
\]
by \eqref{eq-construction-diag-a}.

For $m=2$, we observe that the polynomial $p$ in \eqref{eq-construction}, which is the right-hand side of \eqref{eq-modsystem-coeff}, is non-zero only for $j\in\{\pm 2,0\}$ and $\fk=\{\pm2\skla{1},\mathbf{0}\}$. In particular, we get from \eqref{eq-construction-offdiag}
\[
z_{j,2}^\fk \equiv 0 \myfor \fk\ne \pm\jvec, \quad (j,\fk)\notin \{\pm 2,0\} \times \{\pm2\skla{1},\mathbf{0}\}.
\]
For $j=2$ and $\fk=2\skla{1}$, the polynomial $p$ is non-zero but constant, and we get from \eqref{eq-construction-offdiag} the non-zero off-diagonal modulation coefficient function
\begin{align*}
z_{2,2}^{2\skla{1}} &= \frac{\tau^2\sinc(\tau \om_2)}{4 s_{\skla{2}-2\skla{1}} s_{\skla{2}+2\skla{1}}} \, z_{1,1}^{\skla{1}} \cdot z_{1,1}^{\skla{1}},%\\
%z_{2,2}^{\mathbf{0}} &= \frac{\tau^2\sinc(\tau \om_2)}{4 s_{\skla{2}} s_{\skla{2}}} z_{1,1}^{\skla{1}} \cdot z_{1,1}^{-\skla{1}},\\
%z_{2,2}^{-2\skla{1}} &= \frac{\tau^2\sinc(\tau \om_2)}{4 s_{\skla{2}+2\skla{1}} s_{\skla{2}-2\skla{1}}} z_{1,1}^{-\skla{1}} \cdot z_{1,1}^{-\skla{1}},
\end{align*}
and similar expressions for $z_{2,2}^{\mathbf{0}}$, $z_{2,2}^{-2\skla{1}}$, $z_{-2,2}^{\pm2\skla{1}}$, $z_{-2,2}^{\mathbf{0}}$, $z_{0,2}^{\pm2\skla{1}}$ and $z_{0,2}^{\mathbf{0}}$. For the diagonal modulation coefficient functions, we get $\dot{z}_{j,2}^{\pm\jvec}\equiv 0$ from \eqref{eq-construction-diag-der}, and the computation of the initial values with \eqref{eq-construction-init} finally yields by \eqref{eq-construction-diag-a}
\[
z_{j,2}^{\pm\jvec} \equiv 0 \myfor j\notin \{\pm 2,0\}
\]
and 
\begin{align*}
z_{2,2}^{\pm\skla{2}} \equiv \frac1{2\iu\om_2} \biggl( &-\iu (\om_2\pm2\om_1) \, \frac{\sinc(2\tau\om_1)}{\sinc(\tau\om_2)}\, z_{2,2}^{2\skla{1}} -\iu \om_2 z_{2,2}^{\mathbf{0}}\\
 &- \iu (\om_2\mp2\om_1) \, \frac{\sinc(-2\tau\om_1)}{\sinc(\tau\om_2)}\, z_{2,2}^{-2\skla{1}} + \de^{-2} \bigl(\iu\om_2 u_2^0 \pm \dot{u}_2^0\bigr) \biggr)
\end{align*}
and similar expressions for $z_{-2,2}^{\pm\skla{-2}}$ and $z_{0,2}^{\pm\skla{0}}$.

\begin{figure}[t]
\, \includegraphics{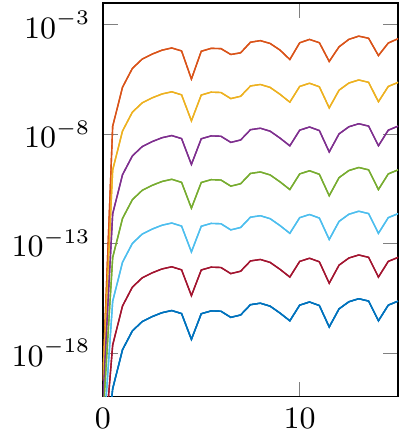} \, \includegraphics{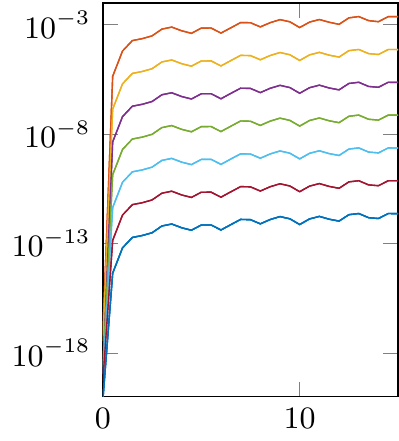} \, \includegraphics{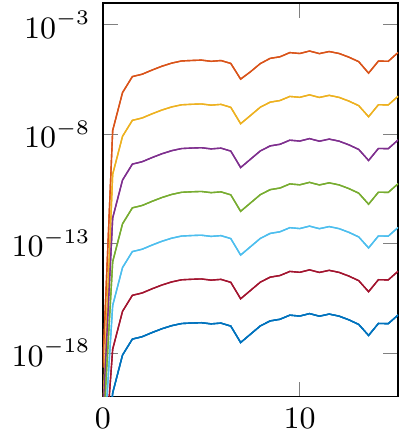} \,
%\enew\, \input{codes_paper_v2/fig_modfuns_0.tikz} \, \input{codes_paper_v2/fig_modfuns_1.tikz} \, \input{codes_paper_v2/fig_modfuns_2.tikz} \,\bnew
\caption{Approximation error $\abs{u_j^n-\widehat{u}_j^n}$ vs.\ time $t_n$ for $j=0$ (left), $j=1$ (middle) and $j=2$ (right). Different lines correspond to different values of $\ee=10^{-2},10^{-3},\ldots,10^{-8}$ in \eqref{eq-init}.}\label{fig-modfuncs}
\end{figure}

By means of the expansions \eqref{eq-mfe} and \eqref{eq-modfunc-exp} (the latter truncated at $m=2$ instead of $m=2K-1$), we get from the computed modulation coefficient functions $z_{j,m}^\fk$ with $m=1,2$ approximations (recall that $\nu=0$ here)
\[
\widehat{u}_j^n = \sum_{\fk\in\calK_j} \sum_{m= m(j,\fk)}^{2} \delta^{m} z^{\fk}_{j,m}(t_n) \e^{\iu(\fk\cdot\fomega) t_n}
\]
to the numerical solution $u_j^n$. For $\tau=0.05$ and $\rho=\sqrt{3}$, the quality of this approximation is illustrated in Figure \ref{fig-modfuncs} for initial values satisfying \eqref{eq-init} with different values of $\ee$. We observe that the approximation is exact for $n=0$ (by construction) and that the approximation is of order $\de^3$ for $j=1$ on a time interval of order one. We also observe an improved approximation order of $\de^4$ for $j=0$ and $j=2$. This can be explained with the construction for $m=3$, which yields $z_{2,3}^\fk=z_{0,3}^\fk\equiv 0$ for all $\fk$.
\end{example}

Going beyond this example, we verify as in \cite[Section 4.1]{Gauckler2012} the following properties of the constructed functions $z_{j,m}^\fk$. Using $e(j)\le e(j_1)+e(j_2)$ for $j\equiv j_1+j_2$, $\mu(\fk)\le \mu(\fk^1)+\mu(\fk^2)$ for $\fk=\fk^1+\fk^2$ and $e(j)=\mu(\fk)<\mu(\fk^1)+\mu(\fk^2)$ for $\fk=\pm\jvec=\fk^1+\fk^2$ with $\mu$ defined in \eqref{eq-mu}, we see that they are polynomials of degree
\begin{equation}\label{eq-degree}
\deg\klabig{z_{j,m}^\fk} \le m-\max(e(j),\mu(\fk)),
\end{equation}
where a negative degree corresponds to the zero polynomial. Moreover, the polynomial $z_{j,m}^\fk$ can be different from the zero polynomial only in the two cases
\begin{align}
\text{case 1:} \quad & \abs{j}\le m, \quad \mu(\fk)\le m, \quad k_l= 0 \,\text{ for $l\ge \min(m+1,K)$,} \label{eq-calK-case1}\\
\text{case 2:} \quad & m\ge K, \quad \abs{(j-r) \bmod{2M}}\ge K, \quad \abs{r}\le m-K, \notag\\
 &\qquad\qquad\,\,\; \fk =\pm\skla{(j-r) \bmod{2M}} + \bar{\fk}, \quad \mu(\bar{\fk})\le m-K. \label{eq-calK-case2}
\end{align}
This explains how the set $\calK$ of \eqref{eq-calK} is built up. 
The two cases follow from the decomposition
\[
2 \phi(\tau\omega_{j_1}) \phi(\tau\omega_{j_2}) \sum_{l=1}^{m-K}  z^{\fk^1}_{j_1,m-l} z^{\fk^2}_{j_2,l} + \phi(\tau\omega_{j_1}) \phi(\tau\omega_{j_2}) \sum_{\substack{m_1+m_2=m\\ m_1<K, \, m_2<K}} z^{\fk^1}_{j_1,m_1} z^{\fk^2}_{j_2,m_2}
\]
of the last line in \eqref{eq-modsystem-coeff}, where the first term is only present for $m>K$: the functions $z^{\fk^i}_{j_i,m_i}$ in the second sum and the function $z^{\fk^2}_{j_2,l}$ in the first sum belong inductively to the first case above, whereas the function $z^{\fk^1}_{j_1,m-l}$ in the first sum belongs either to the first or to the second case, leading to a function $z_{j,m}^\fk$ belonging to the first or second case, respectively.
In addition, we have $\overline{z_{j,m}^\fk}=z_{-j,m}^{-\fk}$ and 
\begin{equation}\label{eq-specialzero}
z_{j,e(l)}^{\pm\skla{l}}=0 \myif \abs{l}\ne\abs{j}.
\end{equation}

\subsection{Bounds of the modulation functions}

For polynomials $z=z(s)$, we introduce the norm
\[
\normv{z}_t = \sum_{l\ge 0} \frac{1}{l!} \absbigg{\frac{\drm^l}{\drm s^l} z(s)}_{s=\de^\nu t}
\]
for $0\le t\le 1$. It has the properties that $\normv{z\cdot w}_t \le \normv{z}_t\cdot\normv{w}_t$ and $\normv{\dot{v}}_t \le \deg(v) \, \normv{v}_t$. With this norm, we have the following discrete counterpart of \cite[Lemma~1]{Gauckler2012}. In contrast to the analytical result in \cite{Gauckler2012}, we gain a factor $\abs{\sinc(\tau\om_j)}^{-1}$ in the estimates, which comes from the filter $\psi$ in the trigonometric integrator \eqref{eq-method} and will be needed later on to deal with this factor in the almost-invariant energies $\calE_l$ of \eqref{eq-calE}.

\begin{lemma}\label{lemma-bounds}
For $m=1,\dots,2K-1$ and $0\le t\le 1$ we have
\begin{gather*}
\sum_{j=-M}^{M-1} \si_j \biggl( \abs{\sinc(\tau\om_j)}^{-1} \sum_{\pm\jvec\ne \fk\in\calK_j} \ga_j^\fk \normvbig{z_{j,m}^\fk}_t \biggr)^2 \lesssim 1,\\
\sum_{j=-M}^{M-1} \si_j \Bigl( \abs{\sinc(\tau\om_j)}^{-1} \ga_j^{\pm\jvec} \normvbig{\dot{z}_{j,m}^{\pm\jvec}}_t \Bigr)^2 \lesssim 1, \qquad
\sum_{j=-M}^{M-1} \si_j \Bigl( \ga_j^{\pm\jvec} \normvbig{z_{j,m}^{\pm\jvec}}_t \Bigr)^2 \lesssim 1
\end{gather*}
with the additional weight
\[
\ga_j^\fk = \max(1,\om_j,\abs{\fk\cdot\fomega}).
\]
\end{lemma}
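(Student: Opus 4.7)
The plan is to prove the three bounds jointly by induction on $m$ from $m=1$ to $m=2K-1$. For the base case $m=1$, Example \ref{ex-constr} shows that only $z_{\pm 1, 1}^{\pm\skla{1}}$ are nonzero, and their values are fixed by the initial data through \eqref{eq-construction-init}; the three bounds then follow immediately from the initial mode-energy assumption \eqref{eq-modeenergies-init}.

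For the inductive step with $\fk\ne\pm\jvec$, the plan is to exploit the Neumann series representation \eqref{eq-construction-offdiag}. Its leading contribution is $(\tau^2\psi(\tau\om_j)/\al_0)p(s) = (\tau^2\sinc(\tau\om_j)\phi(\tau\om_j)/\al_0)p(s)$ by the symplecticity identity \eqref{eq-symplectic}, where $\al_0 = 4s_{\jvec-\fk}s_{\jvec+\fk}$ and $p$ collects the convolution-like sum on the right-hand side of \eqref{eq-modsystem-coeff}. The non-resonance condition \eqref{eq-nonres-gesamt} yields $\abs{\al_0}\ge 4\gamma^2\tau^2$ when $\abs{j}\le K$ (strong condition \eqref{eq-nonres-strong}), and only $\abs{\al_0}\ge 4\gamma^2\tau^2\de^{2\nu}$ when $\abs{j}>K$ (weak condition \eqref{eq-nonres-weak}); in the latter regime the indices necessarily fall into case 2 of \eqref{eq-calK-case2}, and a combinatorial inspection of the admissible decompositions $\fk=\fk^1+\fk^2$, $j\equiv j_1+j_2\bmod 2M$ contributing to $p$ shows that the prefactor $\de^{2(m(j_1,\fk^1)+m(j_2,\fk^2)-m(j,\fk))\nu}$ in \eqref{eq-modsystem-coeff} is at least $\de^{2\nu}$, exactly compensating the $\de^{-2\nu}$ from $\al_0^{-1}$ (this is the compensation announced in the remark following \eqref{eq-modsystem-coeff}). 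Cancelling $\abs{\sinc(\tau\om_j)}$ against the factor $\abs{\sinc(\tau\om_j)}^{-1}$ on the left-hand side of the first claim, the $\si_j$-weighted $\ell^2$-sum of the convolution is controlled by the algebra property \eqref{eq-algebra} using the inductive bounds on $z_{j_i,m_i}^{\fk^i}$; the $\om_j$ weight in $\ga_j^\fk$ is absorbed via \eqref{eq-algebra2}, and the weight $\abs{\fk\cdot\fomega}$ is redistributed onto the induction hypothesis through $\abs{\fk\cdot\fomega}\le\abs{\fk^1\cdot\fomega}+\abs{\fk^2\cdot\fomega}$. The higher-derivative terms $(\al_\ell/\al_0)d^\ell/ds^\ell$ for $\ell\ge 2$ are of size $O((\tau\de^\nu)^{\ell-2})=O(1)$, and the Neumann series terminates at a finite order independent of $\de,\tau,M$ by the degree bound \eqref{eq-degree}.

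In the diagonal case $\fk=\pm\jvec$, the formula \eqref{eq-construction-diag-der} for $\dot{z}_{j,m}^{\pm\jvec}$ involves only the previously constructed off-diagonal coefficients through $p$, and its leading coefficient $\al_1 = -2\iu\tau\de^\nu\sin(\tau\om_j) = -2\iu\tau^2\om_j\de^\nu\sinc(\tau\om_j)$ produces exactly the $\abs{\sinc(\tau\om_j)}^{-1}$ factor appearing in the second claim of the lemma, while the $\om_j$ in $\al_1$ matches the weight $\om_j$ inside $\ga_j^{\pm\jvec}$; the remaining $\de^{-\nu}$ from $\al_1^{-1}$ is absorbed by the gain $\de^{2\nu}$ in the right-hand side of \eqref{eq-modsystem-coeff} stemming from the strict inequality $m(j_1,\fk^1)+m(j_2,\fk^2)>m(j,\pm\jvec)=e(j)$, which holds for every nontrivial decomposition of $\pm\jvec$ because the single index $\pm\abs{j}$ can be carried by only one of $\fk^1,\fk^2$. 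Then $z_{j,m}^{\pm\jvec}$ follows from \eqref{eq-construction-diag-a} with initial value determined by \eqref{eq-construction-init}; its off-diagonal terms are estimated by the first claim (the $1/\sinc(\tau\om_j)$ in \eqref{eq-construction-init} being absorbed by the matching factor in that bound), and the initial-data term $\de^{-e(j)}(\iu\om_j u_j^0\pm\dot{u}_j^0)$, present only for $m=e(j)$, is directly bounded in the $\si_j$-weighted $\ell^2$-norm via \eqref{eq-modeenergies-init}; this yields the third claim of the lemma (which carries no $\abs{\sinc(\tau\om_j)}^{-1}$ factor, consistent with the direct initial-data estimate).

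The main technical obstacle is the combinatorial verification of the strict inequality $m(j_1,\fk^1)+m(j_2,\fk^2)\ge m(j,\fk)+1$ in the two regimes where only the weak non-resonance condition is available: the off-diagonal case with $\abs{j}>K$ and the diagonal case $\fk=\pm\jvec$. Both rely on the explicit case structure \eqref{eq-calK-case1}--\eqref{eq-calK-case2} of $\calK$, and in particular on the fact that the large index $\pm\skla{(j-r)\bmod 2M}$ of case 2 (or the index $\pm\abs{j}$ in the diagonal case) can be attributed to at most one of $\fk^1,\fk^2$, so that the complementary factor supplies the surplus contribution needed to generate the compensating power of $\de^{2\nu}$.
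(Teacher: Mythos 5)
Your proposal is correct and follows essentially the same route as the paper's proof: induction on $m$, the split between the strong non-resonance condition \eqref{eq-nonres-strong} for $\abs{j}\le K$ and the weak one \eqref{eq-nonres-weak} otherwise, compensation of the lost $\de^{2\nu}$ by the strict inequality $m(j_1,\fk^1)+m(j_2,\fk^2)>m(j,\fk)$ in the diagonal and large-$\abs{j}$ cases, the symplecticity $\psi=\sinc\cdot\phi$ to generate the $\abs{\sinc(\tau\om_j)}$ gain, the weight submultiplicativity $\ga_j^\fk\lesssim\ga_{j_1}^{\fk^1}\ga_{j_2}^{\fk^2}$, and the algebra property \eqref{eq-algebra-basic} for the weighted $\ell^2$-sums. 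One small remark: the factor beating $\abs{\sinc(\tau\om_j)}^{-1}$ in the second claim originates from the filter $\psi$ on the right-hand side together with the lower bound $\abs{\al_1}\ge 2\gamma\tau^2\de^{2\nu}$ obtained from \eqref{eq-nonres-weak} with $\fk=\jvec$, rather than from $\al_1$ itself (in your rewriting the two sinc factors cancel exactly), but your bookkeeping still closes once that same non-resonance bound is invoked to convert $\de^{\nu}/\om_j$ back into $\abs{\sinc(\tau\om_j)}/\gamma$.
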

\begin{proof}
The statement is shown by induction on $m$, the case $m=0$ being clear by notation ($z_{j,0}^\fk=0$).

(a) We first consider the case $\fk\ne\pm\jvec$. Within this case, we first consider the case $\abs{j}\le K$, in which the strong non-resonance condition \eqref{eq-nonres-strong} holds. In the notation \eqref{eq-construction} of \eqref{eq-modsystem-coeff} we thus have $1/\abs{\al_0}\le \ga^{-2}\tau^{-2}$, $\abs{\al_1}/\abs{\al_0}\le \ga^{-1}\de^{\nu}$ since $\abs{s_{2\fk}}\le \abs{s_{\jvec+\fk}}+\abs{s_{\jvec-\fk}}$ and $\abs{\al_l}/\abs{\al_0}\le \ga^{-2}\de^{2\nu}$ for $l\ge 2$. Using the symplecticity \eqref{eq-symplectic} to replace $\psi$ by $\sinc\cdot\phi$ in \eqref{eq-modsystem-coeff}, the boundedness of $\phi$, $m(j_1,\fk^1)+m(j_2,\fk^2)\ge m(j,\fk)$ and the properties $\normv{z\cdot w}_t \le \normv{z}_t\cdot\normv{w}_t$ and $\normv{\dot{p}}_t \le \deg(p) \, \normv{p}_t$, we get 
\[
\normvbig{z_{j,m}^\fk}_t \lesssim \abs{\sinc(\tau\om_j)} \sum_{j_1+j_2\equiv j} \sum_{\fk^1+\fk^2=\fk} \sum_{m_1+m_2=m} \normvbig{z_{j_1,m_1}^{\fk^1}}_t \normvbig{z_{j_2,m_2}^{\fk^2}}_t 
\]
for the solution $z=z_{j,m}^\fk$ of \eqref{eq-construction} given by \eqref{eq-construction-offdiag}.

The same estimate also holds in the case $\fk\ne\pm\jvec$ if $\abs{j}> K$. This can be seen as follows. For these indices, only the weaker non-resonance condition \eqref{eq-nonres-weak} holds, and hence we only have $1/\abs{\al_0}\le \ga^{-2}\tau^{-2}\de^{-2\nu}$, $\abs{\al_1}/\abs{\al_0}\le \ga^{-1}$ and $\abs{\al_l}/\abs{\al_0}\le \ga^{-2}$ for $l\ge 2$. The problematic factor $\de^{-2\nu}$, however, can be compensated with the power of $\delta$ in the polynomial $p$ on the right-hand side of \eqref{eq-construction}. In fact, we have in this case
\[
m(j,\fk)=e(j)=K<e(j_1)+e(j_2)\le m(j_1,\fk^1)+m(j_2,\fk^2)
\]
since $\abs{j}>K$, and hence there is an additional factor $\de^{2\nu}$ in the polynomial $p$.

This shows that off-diagonal modulation coefficient functions $z_{j,m}^\fk$ are bounded by a constant, since only modulation coefficient functions $z_{j',m'}^{\fk'}$ with $m'<m$ can appear on right-hand side of the above estimate for  $z_{j,m}^\fk$. To get the summed estimate of the lemma, we use $\ga_j^\fk\lesssim \ga_{j_1}^{\fk^1} \ga_{j_2}^{\fk^2}$ for $\fk=\fk^1+\fk^2$ and $j\equiv j_1+j_2$, which yields
\[
\sum_{j=-M}^{M-1} \si_j \biggl( \abs{\sinc(\tau\om_j)}^{-1} \sum_{\pm\jvec\ne \fk\in\calK_j} \ga_j^\fk \normvbig{z_{j,m}^\fk}_t \biggr)^2
\lesssim \sum_{j=-M}^{M-1} \si_j \biggl( \sum_{j_1+j_2\equiv j} v_{j_1} v_{j_2} \biggr)^2
\]
with
\[
v_{j_1} = \sum_{\fk^1} \sum_{m_1=1}^{m-1} \ga_{j_1}^{\fk^1} \normvbig{z_{j_1,m_1}^{\fk^1}}_t, \qquad  
v_{j_2} = \sum_{\fk^2} \sum_{m_2=1}^{m-1} \ga_{j_2}^{\fk^2} \normvbig{z_{j_2,m_2}^{\fk^2}}_t.
\]
The first estimate then follows inductively from the algebra property \eqref{eq-algebra-basic} and $1\le \abs{\sinc(\tau\om_j)}^{-1}$.

%With $\ga_j^\fk\lesssim \ga_{j_1}^{\fk^1} \ga_{j_2}^{\fk^2}$, the algebra property \eqref{eq-algebra} and $1\le \abs{\sinc(\tau\om_j)}^{-1}$ we finally get the first claimed estimate.

(b) In the case $\fk=\pm\jvec$, the absolute value of the coefficient $\al_1$ is bounded from below by $\ga\tau^2\de^{2\nu}$ by the (weaker) non-resonance condition \eqref{eq-nonres-weak}. Also in this case, we have an additional factor $\de^{2\nu}$ in the polynomial on the right-hand side of \eqref{eq-construction} since $m(j,\fk)=e(j)$ and $m(j_1,\fk^1)\ge e(j)$ or $m(j_2,\fk^2)\ge e(j)$ for $\fk=\pm\jvec=\fk^1+\fk^2$. This shows that
\[
\normvbig{\dot{z}_{j,m}^{\pm\jvec}}_t \lesssim \abs{\sinc(\tau\om_j)} \sum_{j_1+j_2\equiv j} \sum_{\fk^1+\fk^2=\pm\jvec} \sum_{m_1+m_2=m} \normvbig{z_{j_1,m_1}^{\fk^1}}_t \normvbig{z_{j_2,m_2}^{\fk^2}}_t
\]
for the solution $\dot{z}=\dot{z}_{j,m}^{\pm\jvec}$ of \eqref{eq-construction} given by \eqref{eq-construction-diag}.
As in (a), this yields the second estimate of the lemma.

(c) With the definitions of $\ga_j^\fk$ and of the norm $\normv{\cdot}_t$ for $t=0$ and with the estimates $1\le\abs{\sinc(\tau\om_j)}^{-1}$ and $\abs{\sinc(\tau(\fk\cdot\fomega))}\le 1$, we get for the initial values $z_{j,m}^{\pm\jvec}(0)$ constructed with \eqref{eq-construction-init} that
\begin{align*}
\ga_j^{\pm\jvec} \absbig{z_{j,m}^{\pm\jvec}(0)} &\lesssim \sum_{\substack{\fk\in\calK_j\\ \fk\ne\jvec, \, \fk\ne-\jvec}} \abs{\sinc(\tau\om_j)}^{-1} \ga_j^\fk \absbig{z_{j,m}^\fk(0)}\\
 &\qquad +  \sum_{\fk\in\calK_j} \abs{\sinc(\tau\om_j)}^{-1} \ga_j^\fk \normvbig{\dot{z}_{j,m}^\fk}_0 + \de^{-e(j)} \om_j \abs{u_j^0} + \de^{-e(j)} \abs{\dot{u}_j^0}.
\end{align*}
With the results of (a) and (b) and with the assumption \eqref{eq-modeenergies-init} on the initial values, this yields
\[
\sum_{j=-M}^{M-1} \si_j \Bigl( \ga_j^{\pm\jvec} \absbig{z_{j,m}^{\pm\jvec}(0)} \Bigr)^2 \lesssim 1.
\]
The last estimate of the lemma is a combination of this estimate and the second estimate of the lemma proven in (b).
\end{proof}
By means of the expansion \eqref{eq-modfunc-exp} of the modulation functions $z_j^\fk$, the bounds of Lemma \ref{lemma-bounds} on the coefficients $z_{j,m}^\fk$ of this expansion imply in particular the bounds
\begin{align}
\sum_{j=-M}^{M-1} \si_j \biggl( \sum_{\fk\in\calK_j} \ga_j^\fk \de^{-m(j,\fk)} \normvbig{z_{j}^\fk}_t \biggr)^2 &\lesssim 1, \label{eq-bound-modfun-sumall}\\
\sum_{j=-M}^{M-1} \si_j \biggl( \abs{\sinc(\tau\om_j)}^{-1} \sum_{\pm\jvec\ne \fk\in\calK_j} \ga_j^\fk \de^{-m(j,\fk)} \normvbig{z_{j}^\fk}_t \biggr)^2 &\lesssim 1, \label{eq-bound-modfun-sumoffdiag}
\end{align}
on the modulation functions $z_j^\fk$ themselves. In particular, we have
\begin{equation}\label{eq-bound-modfun-easy}
z_j^\fk = \mathcal{O}(\de^{e(j)}) \myand z_j^\fk = \mathcal{O}(\de^{e(l)}) \myif k_l\ne 0
\end{equation}
by the definition of $m(j,\fk)$.

\subsection{Proof of Theorem \ref{thm-4}}\label{subsec-proof-thm4}

The proof of Theorem~\ref{thm-4} relies on the alternative form \eqref{eq-calE-alt} of the almost-invariant energies $\calE_l(t)$ of \eqref{eq-calE}.
Note again that only modulation functions $z_j^\fk$ with $k_l\ne 0$ can appear in $\calE_l(t)$ due to the factor $k_l$. By \eqref{eq-bound-modfun-easy}, this argument shows that $\calE_l(t)=\calO(\de^{2e(l)})$ has the claimed order in $\de$.

To get the precise estimate of Theorem~\ref{thm-4}, we multiply \eqref{eq-calE-alt} with $\si_l \de^{-2e(l)}$, we use $e(l)\le m(j,\fk)$ for $k_l\ne 0$, and we sum over $l$, which yields
\begin{align*}
&\sum_{l=0}^M \si_l \de^{-2e(l)} \abs{\calE_l(t)}\\
 &\qquad \le \frac{1}{2} \sum_{j=-M}^{M-1} \sum_{\fk\in\calK_j} \biggl( \sum_{l=0}^M \si_l \abs{k_l}\omega_l \biggr) \abs{\fk\cdot\fomega} \, \absbigg{\frac{\sinc(\tau(\fk\cdot\fomega))}{\sinc (\tau\omega_j)}} \, \de^{-2m(j,\fk)} \absbig{z_j^\fk\bigl(\ee^{\nu/2}t\bigr)}^2\\
 &\qquad\qquad + \frac{1}{2} \sum_{j=-M}^{M-1} \sum_{\fk\in\calK_j} \biggl( \sum_{l=0}^M \si_l \abs{k_l}\omega_l \biggr) \frac{\de^{\nu}}{\abs{\sinc (\tau\omega_j)}} \, \de^{-2m(j,\fk)} \absbig{z_{-j}^{-\fk}\bigl(\ee^{\nu/2}t\bigr)} \cdot \normvbig{\dot{z}^{\fk}_j}_t
\end{align*}
by the definition of the norm $\normv{\cdot}_t$. For the first term on the right-hand side of this estimate,
we use
\begin{equation}\label{eq-sumcalE-aux}
\sum_{l=0}^M \sigma_l |k_l| \omega_l \lesssim \sigma_j \gamma_j^\fk \myif \fk\in\calK_j
\end{equation}
and $\abs{\fk\cdot\fomega}\le \ga_j^\fk$, we use that the $\ell^1$-norm dominates the $\ell^2$-norm (more precisely, $\sum_{\fk\in\calK_j} \abs{v_j^\fk}^2 \le (\sum_{\fk\in\calK_j} \abs{v_j^\fk})^2$), and we use \eqref{eq-bound-modfun-sumall} for $\fk=\pm\jvec$ (the two sincs cancel in this case) and \eqref{eq-bound-modfun-sumoffdiag} for $\fk\ne\pm\jvec$. This shows that this first term is bounded by a constant. To show that also the second term is bounded by a constant, we apply in addition the non-resonance condition \eqref{eq-nonres-weak} to bound $\de^{\nu}/\abs{\sinc (\tau\omega_j)}$ and the Cauchy--Schwarz inequality.

\subsection{Bounds of the defect}\label{subsec-defect}

When constructing $z_j^\fk$ with the expansion \eqref{eq-modfunc-exp}, the defect $d_j^\fk$, $\fk\in\calK_j$, in \eqref{eq-modsystem} is given by
\begin{equation}\label{eq-defect}\begin{split}
d_j^\fk &= -\sum_{m=2K}^{2(2K-1)} \delta^{m(1-2\nu)} \sum_{\fk^1+\fk^2=\fk} \sum_{j_1+j_2\equiv j} \de^{2(m(j_1,\fk^1)+m(j_2,\fk^2))\nu} \\
 &\qquad\qquad\qquad\qquad \sum_{m_1+m_2=m} \phi(\tau\omega_{j_1}) z_{j_1,m_1}^{\fk^1} \phi(\tau\omega_{j_2}) z_{j_2,m_2}^{\fk^2}.
\end{split}\end{equation}
Although we consider system \eqref{eq-modsystem} defining the defect $d_j^\fk$ only for $\fk\in\calK_j$, we use this formula to define $d_j^\fk$ also for $\fk\notin\calK_j$. This will be helpful below.

\begin{lemma}\label{lemma-defect}
The defect $d_j^\fk$ in \eqref{eq-modsystem} given by \eqref{eq-defect} satisfies, for $0\le t \le 1$,
\[
\sum_{j=-M}^{M-1} \si_j \biggl( \sum_{\fk\in\Z^{M+1}} \ga_j^\fk \de^{-2m(j,\fk)\nu} \normvbig{d_j^\fk}_t \biggr)^2 \lesssim \de^{4K(1-2\nu)}
\]
with $\ga_j^\fk$ as in Lemma \ref{lemma-bounds}. 
\end{lemma}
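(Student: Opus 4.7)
The plan is to exploit the fact that the defect is precisely the tail of the expansion \eqref{eq-modfunc-exp}: only exponents $m \ge 2K$ appear in \eqref{eq-defect}, so the prefactor $\delta^{m(1-2\nu)}$ contributes at least $\delta^{2K(1-2\nu)}$. Since the defect is a product of two modulation coefficient functions summed over $m_1+m_2=m$ with $m_1,m_2 \le 2K-1$, squaring and summing should give $\delta^{4K(1-2\nu)}$ once each factor is controlled by Lemma \ref{lemma-bounds}. Concretely, I would first write
\[
\de^{-2m(j,\fk)\nu} \normvbig{d_j^\fk}_t \lesssim \de^{4K(1-2\nu)} \!\!\!\sum_{\fk^1+\fk^2=\fk} \sum_{j_1+j_2\equiv j} \sum_{m_1,m_2} \prod_{i=1,2} \de^{2m(j_i,\fk^i)\nu - 2m_i\nu} \normvbig{z_{j_i,m_i}^{\fk^i}}_t,
\]
using $m(j,\fk) \le m(j_1,\fk^1) + m(j_2,\fk^2)$ (valid with equality or excess for $\fk = \fk^1+\fk^2$, $j \equiv j_1+j_2$) and bounding $|\phi(\tau\om_{j_i})|$ by a constant. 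A negative excess in the exponent of $\delta$ cannot arise because $m_i(1-2\nu) + 2m(j_i,\fk^i)\nu = m_i + 2(m(j_i,\fk^i)-m_i)\nu \le m_i$ is the wrong direction; the correct rewriting is to keep $\de^{m_i(1-2\nu)}$ attached to $z_{j_i,m_i}^{\fk^i}$ so that the block $\de^{m_i(1-2\nu) + 2m(j_i,\fk^i)\nu} \normv{z_{j_i,m_i}^{\fk^i}}_t$ is exactly what Lemma \ref{lemma-bounds} controls when folded into the expansion \eqref{eq-modfunc-exp}.

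The next step is to sum over $\fk$ (which after using $\fk = \fk^1 + \fk^2$ becomes independent sums over $\fk^1$ and $\fk^2$) and insert the weight $\ga_j^\fk$. Using the submultiplicativity $\ga_j^\fk \lesssim \ga_{j_1}^{\fk^1}\ga_{j_2}^{\fk^2}$ that was already employed in the proof of Theorem \ref{thm-4}, the inner sum becomes a discrete convolution $(v \ast w)_j$ where
\[
v_{j_1} = \sum_{\fk^1} \ga_{j_1}^{\fk^1}\!\!\!\sum_{m_1=0}^{2K-1}\!\!\! \de^{-m(j_1,\fk^1)(1-2\nu)}\!\cdot\! \de^{m(j_1,\fk^1)(1-2\nu)} \normvbig{z_{j_1,m_1}^{\fk^1}}_t,
\]
and similarly for $w_{j_2}$. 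At this point the algebra property \eqref{eq-algebra-basic} reduces the squared-weighted sum over $j$ to a product of two factors of the same form, each of which is $\lesssim 1$ by the bounds of Lemma \ref{lemma-bounds} combined with the expansion \eqref{eq-modfunc-exp} (i.e.\ the estimate \eqref{eq-bound-modfun-sumall}).

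The main technical nuisance I anticipate is keeping the powers of $\delta^\nu$ straight when $\fk \notin \calK_j$: the definition of the defect via \eqref{eq-defect} extends formally to all $\fk$, and one has to verify that even for $\fk \notin \calK_j$ the pair $(\fk^1,\fk^2)$ still satisfies $\fk^i \in \calK_{j_i}$ (otherwise $z_{j_i,m_i}^{\fk^i}=0$), so the estimates of Lemma \ref{lemma-bounds} still apply to each factor. A minor related point is the argument that $m(j_1,\fk^1)+m(j_2,\fk^2) \ge m(j,\fk)$ continues to hold for $\fk = \fk^1+\fk^2$ outside $\calK_j$: this follows from $e(j) \le e(j_1)+e(j_2)$ together with $\mu(\fk^1)+\mu(\fk^2) \ge \mu(\fk)$ and the definition of $m(j,\fk)$. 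Once these bookkeeping matters are resolved, pulling out $\delta^{4K(1-2\nu)}$ and applying \eqref{eq-algebra-basic} and Lemma \ref{lemma-bounds} yields the claim.
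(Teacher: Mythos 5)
Your argument is correct and is exactly the elaboration the paper intends: its own proof of this lemma is the single sentence that it ``follows with Lemma \ref{lemma-bounds} and the arguments used in its proof,'' i.e.\ the factorization of \eqref{eq-defect} into blocks controlled by Lemma \ref{lemma-bounds}, the submultiplicativity $\ga_j^\fk\lesssim\ga_{j_1}^{\fk^1}\ga_{j_2}^{\fk^2}$, and the algebra property \eqref{eq-algebra-basic}. The only slip is the prefactor in your first display, which should read $\de^{2K(1-2\nu)}$ rather than $\de^{4K(1-2\nu)}$ for the unsquared quantity (the exponent $4K$ only appears after squaring via \eqref{eq-algebra-basic}); your subsequent rewriting and conclusion already account for this correctly.
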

\begin{proof}
The result follows with Lemma \ref{lemma-bounds} and the arguments used in its proof.
\end{proof}

On a higher level, the approximation $\widetilde{\fu}^n=(\widetilde{u}_{-M}^n,\dots,\widetilde{u}_{M-1}^n)^\transpose$ of \eqref{eq-mfe} to the numerical solution $\fu^n$ then has a defect $\fe^n=(e_{-M}^n,\dots,e_{M-1}^n)^\transpose$ when inserted into the numerical method \eqref{eq-method-main}:
\begin{equation}\label{eq-defect-e}
\widetilde{\fu}^{n+1} - 2\cos(\tau\fOmega) \widetilde{\fu}^n + \widetilde{\fu}^{n-1} = \tau^2 \fPsi \bigl( (\fPhi \widetilde{\fu}^n) \ast (\fPhi \widetilde{\fu}^n) \bigr) + \tau^2 \fPsi \fe^n.
\end{equation}
By construction, this defect is given by
\[
e_j^n = \sum_{\fk\in\calK_j} d_j^\fk \e^{\iu (\fk\cdot\fomega)t_n} - \sum_{\fk\in\Z^{M+1}\setminus\calK_j} \sum_{\fk^1+\fk^2=\fk} \sum_{j_1+j_2\equiv j} \phi(\tau\omega_{j_1}) z_{j_1}^{\fk^1} \phi(\tau\omega_{j_2}) z_{j_2}^{\fk^2} \e^{\iu (\fk\cdot\fomega)t_n},
\]
where the defect $d_j^\fk$ and the modulation functions are evaluated at $\de^\nu t_n$.

\begin{lemma}\label{lemma-defect-e}
The defect $\fe^n$ in \eqref{eq-defect-e} satisfies, for $0\le t_n=n\tau \le 1$,
\[
\norm{\fOmega\fe^n} \lesssim \de^{2K(1-2\nu)}
\]
with the norm $\norm{\cdot}$ of Section \ref{subsec-norms}.
\end{lemma}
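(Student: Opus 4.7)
The plan is to recognize that the two halves of $e_j^n$ collapse into a single sum
\[
e_j^n \;=\; \sum_{\fk\in\Z^{M+1}} d_j^\fk(\de^\nu t_n)\,\e^{\iu(\fk\cdot\fomega)t_n},
\]
after which Lemma~\ref{lemma-defect} gives the desired bound directly. The collapse is driven by the observation that, for every $\fk\notin\calK_j$, inserting the expansion~\eqref{eq-modfunc-exp} into the ``leaked'' convolution appearing in the formula for $e_j^n$ leaves only contributions with $m_1+m_2\ge 2K$; this range matches exactly the summation range in the defining formula~\eqref{eq-defect} for $d_j^\fk$, and the signs align so that the leaked sum equals $d_j^\fk$.

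This identification hinges on the following combinatorial claim: if $j_1+j_2\equiv j\bmod 2M$, $\fk^1+\fk^2=\fk$ and $z_{j_1,m_1}^{\fk^1}\cdot z_{j_2,m_2}^{\fk^2}\ne 0$, then $m_1+m_2<2K$ forces $(j,\fk)\in\calK$. I would verify this by case analysis on the support conditions \eqref{eq-calK-case1}--\eqref{eq-calK-case2} of the two factors. If both $(j_i,\fk^i,m_i)$ satisfy~\eqref{eq-calK-case1}, then $\mu(\fk)\le\mu(\fk^1)+\mu(\fk^2)\le m_1+m_2<2K$ and $k_l=k^1_l+k^2_l=0$ for $l\ge K$; the bound $|j_1|+|j_2|\le m_1+m_2<2K$ combined with $|j|\le M$ yields $|j|<2K$ whether or not the modular reduction is nontrivial, so $(j,\fk)$ lies in case~1 of $\calK$. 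The purely case-\eqref{eq-calK-case2} subcase cannot occur since it forces $m_1,m_2\ge K$ and hence $m_1+m_2\ge 2K$. In the mixed subcase, say $(j_2,\fk^2,m_2)$ satisfies~\eqref{eq-calK-case2}, write $\fk^2=\pm\skla{(j_2-r_2)\bmod 2M}+\bar\fk^2$ with $|r_2|,\mu(\bar\fk^2)\le m_2-K$ and set $r:=(j_1+r_2)\bmod 2M$; then $|r|\le m_1+m_2-K<K$, the congruence $j-r\equiv j_2-r_2\bmod 2M$ yields $|(j-r)\bmod 2M|\ge K$, and $\fk=\pm\skla{(j-r)\bmod 2M}+(\fk^1+\bar\fk^2)$ with $\mu(\fk^1+\bar\fk^2)\le m_1+m_2-K<K$, placing $(j,\fk)$ in case~2 of $\calK$. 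In every subcase $(j,\fk)\in\calK$, contradicting $\fk\notin\calK_j$.

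With the collapse established, the estimate follows from Lemma~\ref{lemma-defect}. Using $e(j)\le m(j,\fk)$ (immediate from $m(j,\fk)=\max(e(j),\max_{l\colon k_l\ne 0}e(l))$) and $\om_j\le\ga_j^\fk$, one has pointwise
\[
\de^{-2e(j)\nu}\,\om_j\,|e_j^n|\;\le\;\sum_{\fk\in\Z^{M+1}}\ga_j^\fk\,\de^{-2m(j,\fk)\nu}\,\normvbig{d_j^\fk}_{t_n}.
\]
Squaring, weighting by $\si_j$, summing over $j$ and invoking Lemma~\ref{lemma-defect} then yield $\norm{\fOmega\fe^n}^2\lesssim\de^{4K(1-2\nu)}$, the claimed bound. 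The main obstacle is the combinatorial claim: it is conceptually elementary but requires careful handling of the modulo-$2M$ reductions, which enter both the convolution index $j$ and the twist vector $\skla{(j-r)\bmod 2M}$ in case~2 of $\calK$, together with the interplay between the $m$-dependent support conditions and the ($m$-independent) defining conditions of $\calK$.
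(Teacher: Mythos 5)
Your proof is correct and takes essentially the same route as the paper: both collapse $e_j^n$ into the single sum $\sum_{\fk\in\Z^{M+1}} d_j^\fk(\de^\nu t_n)\e^{\iu(\fk\cdot\fomega)t_n}$ using the closure property of $\calK$ under the convolution of supports with $m_1+m_2\le 2K-1$, and then conclude via Lemma~\ref{lemma-defect} with $e(j)\le m(j,\fk)$ and $\om_j\le\ga_j^\fk$. The only difference is that the paper asserts the closure property as built into the construction of $\calK$, while you verify it explicitly through the case analysis on \eqref{eq-calK-case1}--\eqref{eq-calK-case2}, which is sound.
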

\begin{proof}
We use that the set $\calK_j$ was constructed in such a way that $\fk$ belongs to $\calK_j$ if $\fk=\fk^1+\fk^2$, $j\equiv j_1+j_2$ and $m_1+m_2\le 2K-1$ for $\fk^1,\fk^2,j_1,j_2,m_1,m_2$ with $z_{j_1,m_1}^{\fk^1}\ne0$ and $z_{j_2,m_2}^{\fk^2}\ne0$. Using \eqref{eq-defect} to define $d_j^\fk$ also for $\fk\notin \calK_j$, this yields the compact form
\[
e_j^n= \sum_{\fk\in\Z^{M+1}} d_j^\fk(\de^\nu t_n) \e^{\iu (\fk\cdot\fomega)t_n}
\]
for the defect in \eqref{eq-defect-e}. From the definition of the norm $\norm{\cdot}$ (see Section \ref{subsec-norms}), we thus get
\[
\norm{\fOmega\fe^n}^2 \le \sum_{j=-M}^{M-1} \si_j \de^{-4e(j)\nu} \biggl( \sum_{\fk\in\Z^{M+1}} \om_j \absbig{d_j^\fk(\de^\nu t_n)} \biggr)^2.
\]
Using $e(j)\le m(j,\fk)$ and $\om_j\le \ga_j^\fk$, the statement of the lemma thus follows from Lemma \ref{lemma-defect}.
\end{proof}

\subsection{Bounds of the remainder}\label{subsec-remainder}

With the constructed modulation functions $z_j^\fk$, we get the approximation $\widetilde{u}_j^n$ of \eqref{eq-mfe} to the numerical solution $u_j^n$. From \eqref{align:dot-u-mfe}, we also get an approximation $\dot{\widetilde{u}}_j^n$ to $\dot{u}_j^n$. In the following lemma, we establish a bound for the approximation error.

\begin{lemma}\label{lemma-difference}
The remainders $u_j^n-\widetilde{u}_j^n$ and $\dot{u}_j^n-\dot{\widetilde{u}}_j^n$ satisfy, for $0\le t_n\le 1$,
\[
\normbig{\fOmega\klabig{\fu^n-\widetilde{\fu}^n}} + \normbig{\dot{\fu}^n-\dot{\widetilde{\fu}}^n} \lesssim \de^{2K(1-2\nu)}.
\]
\end{lemma}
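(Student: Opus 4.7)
The plan is to derive a second-order linear recursion for $\fr^n = \fu^n - \widetilde{\fu}^n$ and close it with a discrete Gronwall estimate. Subtracting \eqref{eq-defect-e} from \eqref{eq-method-main} and using the factorization $\fa \ast \fa - \fb \ast \fb = (\fa+\fb) \ast (\fa-\fb)$ with $\fa = \fPhi \fu^n$, $\fb = \fPhi \widetilde{\fu}^n$, I obtain
\begin{equation*}
\fr^{n+1} - 2\cos(\tau\fOmega) \fr^n + \fr^{n-1} = \tau^2 \fPsi \klabig{ \fPhi(\fu^n+\widetilde{\fu}^n) \ast \fPhi \fr^n } - \tau^2 \fPsi \fe^n.
\end{equation*}
The initial data are controlled as follows: \eqref{eq-modsystem-init1} gives $\fr^0 = 0$, while combining \eqref{eq-defect-e} at $n=0$ with \eqref{align:dot-u-mfe} yields $\widetilde{\fu}^1 = \cos(\tau\fOmega)\widetilde{\fu}^0 + \tau\sinc(\tau\fOmega) \dot{\widetilde{\fu}}^0 + \tfrac12 \tau^2 \fPsi((\fPhi\widetilde{\fu}^0)\ast(\fPhi\widetilde{\fu}^0)) + \tfrac12 \tau^2 \fPsi \fe^0$, so comparison with the starting scheme \eqref{eq-method-starting} together with \eqref{eq-modsystem-init2} (which forces $\dot{\widetilde{\fu}}^0 = \dot{\fu}^0$) gives $\fr^1 = -\tfrac12 \tau^2 \fPsi \fe^0$ and, by the same argument for $\widetilde{\fu}^{-1}$, also $\fr^{-1} = -\tfrac12 \tau^2 \fPsi \fe^0$. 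Hence $\dot{\fr}^0 = 0$ and $\normbig{\fOmega \fr^{\pm 1}} \lesssim \tau^2 \de^{2K(1-2\nu)}$ by Lemma \ref{lemma-defect-e}.

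The quadratic source is controlled via the algebra property \eqref{eq-algebra2}, the boundedness of the filter functions, and the bound $\normbig{\fOmega \widetilde{\fu}^n} \lesssim \de^{1-2\nu}$. The latter follows by applying the Cauchy--Schwarz-type bound \eqref{eq-bound-modfun-sumall} to $\absbig{\om_j \widetilde{u}_j^n} \le \sum_\fk \ga_j^\fk \de^{-m(j,\fk)} \absbig{z_j^\fk} \cdot \de^{m(j,\fk)}$, together with $\ga_j^\fk \ge \om_j$ and $m(j,\fk) \ge e(j) \ge 1$ (so $\de^{m(j,\fk)(1-2\nu)} \le \de^{1-2\nu}$ after incorporating the weight $\ee^{-e(j)\nu}$ in $\norm{\cdot}$). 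Under a bootstrap hypothesis $\normbig{\fOmega \fr^n} \le \de^{1-2\nu}$ one then obtains $\normbig{\fOmega \fu^n} \lesssim \de^{1-2\nu}$ and consequently, via \eqref{eq-algebra2},
\begin{equation*}
\normBig{\fOmega\fPsi\klabig{\fPhi(\fu^n+\widetilde{\fu}^n)\ast\fPhi\fr^n}} \lesssim \de^{1-2\nu} \normbig{\fOmega \fr^n}.
\end{equation*}

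Introducing $\dot{\fr}^n = (2\tau\sinc(\tau\fOmega))^{-1}(\fr^{n+1}-\fr^{n-1})$ and the discrete energy $H^n = \normbig{\fOmega \fr^n}^2 + \normbig{\dot{\fr}^n}^2$, the explicit solution $\fr^n = \cos(n\tau\fOmega)A + \sin(n\tau\fOmega)B$ of the linear recursion shows that $H^n$ is exactly preserved under the linear part. A discrete Duhamel formula together with Lemma \ref{lemma-defect-e} and the source bound above then yields
\begin{equation*}
\sqrt{H^n} \le \sqrt{H^1} + C\tau \sum_{k=1}^{n-1} \klabig{\de^{1-2\nu}\sqrt{H^k} + \de^{2K(1-2\nu)}}.
\end{equation*}
Applying discrete Gronwall on $0 \le t_n = n\tau \le 1$ gives $\sqrt{H^n} \lesssim \de^{2K(1-2\nu)} \exp(C \de^{1-2\nu}) \lesssim \de^{2K(1-2\nu)}$, which both proves the claimed estimate and closes the bootstrap, since $\de^{2K(1-2\nu)} \ll \de^{1-2\nu}$ for $K \ge 1$ and $\de$ sufficiently small.

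The main obstacle is establishing the discrete energy identity for the filtered trigonometric scheme rigorously, in particular making sure that the symplectic relation \eqref{eq-symplectic} between $\psi$ and $\phi$ is used precisely where needed so that the filter factors $\fPsi$, $\fPhi$, and $\sinc(\tau\fOmega)^{-1}$ appearing in the definition of $\dot{\fr}^n$ do not spoil the estimates at high modes where $\sinc(\tau\om_j)$ may be small. Controlling the consistency of the two definitions of $\dot{\fr}^n$ (as $\dot{\fu}^n - \dot{\widetilde{\fu}}^n$ in the lemma statement and as the central difference of $\fr^n$) is essentially the content of \eqref{align:dot-u-mfe} together with \eqref{eq-method-velocity} and needs to be used at the boundaries $n = 0, 1$ to initialise the energy estimate cleanly.
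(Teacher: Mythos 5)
Your proposal is correct and takes essentially the same route as the paper's proof: both linearize the scheme around the modulated Fourier expansion, use that the linear propagator is an isometry for the energy norm $\bigl(\norm{\fOmega\fv}^2+\norm{\dot{\fv}}^2\bigr)^{1/2}$, and close a discrete Gronwall-type estimate via the algebra properties \eqref{eq-algebra}--\eqref{eq-algebra2}, the smallness of $\fu^n$ and $\widetilde{\fu}^n$, and the defect bound of Lemma \ref{lemma-defect-e}. The only cosmetic differences are that the paper works with the one-step position--velocity formulation \eqref{eq-method-onestep}--\eqref{eq-mfe-onestep} and a direct induction for the smallness of $\fu^n$, whereas you stay with the two-step recursion, an explicit discrete Duhamel sum, and a bootstrap.
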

\begin{proof}
(a) In a first step, we write the numerical scheme \eqref{eq-method} and its approximation by a modulated Fourier expansion in one-step form. The method in one-step form reads
\begin{equation}\label{eq-method-onestep}
\begin{pmatrix} \fu^{n+1}\\ \dot{\fu}^{n+1} \end{pmatrix}
= 
\begin{pmatrix} \cos(\tau\fOmega) & \fOmega^{-1} \sin(\tau\fOmega)\\ -\fOmega \sin(\tau\fOmega) & \cos(\tau\fOmega) \end{pmatrix}
\begin{pmatrix} \fu^{n}\\ \dot{\fu}^{n} \end{pmatrix}
+ \frac{\tau}2
\begin{pmatrix}
\tau \fPsi \fg^{n} \\ \cos(\tau\fOmega) \fPhi \fg^{n} +  \fPhi \fg^{n+1}
\end{pmatrix}
\end{equation}
with $\fg^n=(\fPhi \fu^n) \ast (\fPhi \fu^n)$, see \cite[Section XIII.2.2]{Hairer2006}.
The first line is obtained by adding \eqref{eq-method-main} and \eqref{eq-method-velocity}, and the second line is obtained by subtracting these equations with $n+1$ instead of $n$ and using the first line to replace $\fu^{n+1}$ as well as the symplecticity \eqref{eq-symplectic}. In the same way, we derive for the approximations $\widetilde{\fu}^n$ and $\dot{\widetilde{\fu}}^n$, which satisfy \eqref{eq-method-velocity} exactly (by definition \eqref{align:dot-u-mfe}) and \eqref{eq-method-main} up to a small defect given by \eqref{eq-defect-e}, 
\begin{equation}\label{eq-mfe-onestep}
\begin{pmatrix} \widetilde{\fu}^{n+1}\\ \dot{\widetilde{\fu}}^{n+1} \end{pmatrix}
= 
\begin{pmatrix} \cos(\tau\fOmega) & \fOmega^{-1} \sin(\tau\fOmega)\\ -\fOmega \sin(\tau\fOmega) & \cos(\tau\fOmega) \end{pmatrix}
\begin{pmatrix} \widetilde{\fu}^{n}\\ \dot{\widetilde{\fu}}^{n} \end{pmatrix}
+ \frac{\tau}2 
\begin{pmatrix}
\tau \fPsi \widetilde{\fg}^{n} \\ \cos(\tau\fOmega) \fPhi \widetilde{\fg}^{n} + \fPhi \widetilde{\fg}^{n+1}
\end{pmatrix}
\end{equation}
with $\widetilde{\fg}^n=\fg^n+\fe^n$. 

(b) In the following, we write
\begin{equation}\label{eq-norm-aux}
\norm{(\fv,\dot{\fv})} = \bigl( \norm{\fOmega\fv}^2 + \norm{\dot{\fv}}^2 \bigr)^{1/2}
\end{equation}
for a norm that is equivalent to the norm considered in the statement of the lemma. By induction on $n$, we prove that the numerical solutions stays small in this norm,
\[
\norm{(\fu^n,\dot{\fu}^n)} \le \bigl(\sqrt{2C_0}+t_n\bigr) \de^{1-2\nu} \myfor 0\le t_n\le 1
\]
with $C_0$ from \eqref{eq-modeenergies-init}, provided that $\de^{1-2\nu}$ is sufficiently small.
This is true for $n=0$ by the choice of the initial value \eqref{eq-modeenergies-init}. For $n>0$, we first note that, by \eqref{eq-method-onestep}, 
\[
\norm{\fOmega\fu^n}\le \norm{\fOmega\fu^{n-1}} + \norm{\dot{\fu}^{n-1}} + \tfrac12 \tau^2 \norm{\fOmega \fPsi \fg^{n-1}},
\]
and hence $\norm{\fOmega\fu^n}\le C \de^{1-2\nu}$ by induction, by \eqref{eq-symplectic}, by the boundedness of $\phi$ and by the algebra property \eqref{eq-algebra2}. 
We then observe that the matrix appearing in the one-step formulation \eqref{eq-method-onestep} of the method preserves the norm \eqref{eq-norm-aux}. Moreover, by the algebra properties \eqref{eq-algebra} and \eqref{eq-algebra2}, the second term in the one-step formulation \eqref{eq-method-onestep} can be estimated in the norm \eqref{eq-norm-aux} by $\tau\de^{2(1-2\nu)}$ up to a constant. This implies the stated bound of $\norm{(\fu^n,\dot{\fu}^n)}$.

Similarly, we get for the modulated Fourier expansion the bound
\[
\norm{(\widetilde{\fu}^n,\dot{\widetilde{\fu}}^n)} \le \bigl(\sqrt{2C_0}+t_n\bigr) \de^{1-2\nu} \myfor 0\le t_n\le 1.
\]
To get this bound, we just repeat the presented argument for the numerical solution given by \eqref{eq-method-onestep} for the modulated Fourier expansion which satisfies the very similar relation \eqref{eq-mfe-onestep}. The only difference is the additional term $\fe^n$ in \eqref{eq-mfe-onestep}, which can be estimated with Lemma \ref{lemma-defect-e}.

(c) For the difference $(\fu^n-\widetilde{\fu}^n,\dot{\fu}^n-\dot{\widetilde{\fu}}^n)$, we subtract the above one-step formulations, and then proceed as in the proof of the smallness of $(\fu^n,\dot{\fu}^n)$ and $(\widetilde{\fu}^n,\dot{\widetilde{\fu}}^n)$ in (b). We use the smallness of $(\fu^n,\dot{\fu}^n)$ and $(\widetilde{\fu}^n,\dot{\widetilde{\fu}}^n)$ together with the algebra property \eqref{eq-algebra} and Lemma \ref{lemma-defect-e} to control the difference $\fg^{n}-\widetilde{\fg}^{n}=-\fe^n$. The mentioned lemma on the defect introduces the small parameter $\de^{2K(1-2\nu)}$.
\end{proof}

With the results proven in this section, the proofs of Theorems \ref{thm-3} and \ref{thm-4} are complete.

\section{Almost-invariant energies: Proofs of Theorems~\ref{thm-5}--\ref{thm-7}}\label{sec-proof2}

\subsection{Almost-invariant energies: Proof of Theorem \ref{thm-5}}

For the proof of Theorem~\ref{thm-5}, we sum the equality \eqref{align:FastInvarianz} to get
\[
\calE_l(t_n)-\calE_l(0) = - \frac{\iu}2 \sum_{j=-M}^{M-1}\sum_{\fk\in\calK_j} \sum_{\tilde{n}=1}^{n} \tau k_l\omega_l \phi(\tau\omega_j)  z_{-j}^{-\fk}(\delta^\nu t_{\tilde{n}}) d_j^\fk(\delta^\nu t_{\tilde{n}})
\]
for the $l$th almost-invariant energy $\calE_l$.
Note again that only indices $\fk$ with $k_l\ne 0$ are relevant in the above sum, and hence we have there $z_{-j}^{-\fk}=\calO(\de^{e(l)})$ by \eqref{eq-bound-modfun-easy} and $d_j^\fk=\calO(\de^{e(l)+(2K-e(l))(1-2\nu)})$ by Lemma \ref{lemma-defect} (since $m(j,\fk)\ge e(l)$). This shows that the variation in the almost-invariant energy $\calE_l$ is indeed of the claimed order, and it shows the additional statement for $l=1$.

To get the precise estimate of Theorem~\ref{thm-5}, we proceed as in the proof of Theorem \ref{thm-4} in Section \ref{subsec-proof-thm4}: We multiply the above equation with $\si_l \de^{-2e(l)}$ and sum over $l$, we use $e(l)\le m(j,\fk)\le K$ for $k_l\ne 0$, \eqref{eq-sumcalE-aux} and $n\tau=t_n\le 1$, and we apply the Cauchy--Schwarz inequality, the estimate \eqref{eq-bound-modfun-sumall} of the modulation functions, and the estimate of the defect of Lemma \ref{lemma-defect}.

\subsection{Transitions in the almost-invariant energies: Proof of Theorem \ref{thm-6}}

We consider the situation of Theorem \ref{thm-6}, with a modulated Fourier expansion with coefficients $z_j^\fk(\de^\nu t)$, $0\le t\le 1$, constructed from $(\fu^0,\dot{\fu}^0)$ and a modulated Fourier expansion with coefficients $\widetilde{z}_j^\fk(\de^\nu t)$, $0\le t\le 1$, constructed from $(\fu^N,\dot{\fu}^N)$, where $N$ is such that $t_N=N\tau=1$. For studying the difference of the corresponding almost-invariant energies, we first consider the difference of the modulation functions themselves.

\begin{lemma}\label{lemma-interface}
Under the assumptions of Theorem \ref{thm-6}, we have
\[
\sum_{j=-M}^{M-1} \si_j \klabigg{ \sum_{\fk\in\calK_j} \gamma_j^\fk \de^{-2m(j,\fk)\nu} \normvbig{ z_j^\fk(\cdot+\de^\nu) \e^{\iu(\fk\cdot\fomega)} - \widetilde{z}_j^\fk }_0 }^2 \lesssim \de^{4K(1-2\nu)}.
\] 
\end{lemma}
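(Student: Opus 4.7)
The plan is to identify the shifted coefficients $\hat{z}_j^\fk(s):=z_j^\fk(s+\de^\nu)\e^{\iu(\fk\cdot\fomega)}$ as a modulated Fourier expansion of the same type as $\widetilde{\fz}$, but generated from the perturbed initial data $(\widetilde{\fu}^N,\dot{\widetilde{\fu}}^N)=(\fu^N-\fr^N,\dot{\fu}^N-\dot{\fr}^N)$, and then to compare it with $\widetilde{\fz}$, which is generated from the true data $(\fu^N,\dot{\fu}^N)$. By Theorem~\ref{thm-3}, the two input data sets differ by $\calO(\de^{2K(1-2\nu)})$ in the norms $\norm{\fOmega\,\cdot\,}$ and $\norm{\cdot}$, and the linearity of the construction of Section~\ref{sec-proof1} will propagate this bound to the modulation functions themselves.

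To make this precise, I would first verify that $\hat{z}_j^\fk$ satisfies the modulation system \eqref{eq-modsystem} with the shifted defect $\hat{d}_j^\fk(s):=d_j^\fk(s+\de^\nu)\e^{\iu(\fk\cdot\fomega)}$. This is a direct consequence of the gauge invariance of the extended potential $\calU$ of \eqref{eq-calU} under the phase rotation $\calS_\fomega(\theta)$ -- exactly the invariance exploited in Section~\ref{sec-mfe} to derive the almost-invariant energies. Since the shifted polynomial $\hat{z}_j^\fk$ has the same degree as $z_j^\fk$ (still bounded by \eqref{eq-degree}) and, by construction, satisfies \eqref{eq-modsystem-init} with $(\fu^0,\dot{\fu}^0)$ replaced by $(\widetilde{\fu}^N,\dot{\widetilde{\fu}}^N)$, the uniqueness of the recursive construction \eqref{eq-construction-offdiag}, \eqref{eq-construction-diag-a}--\eqref{eq-construction-init} then identifies $\hat{\fz}$ with the modulated Fourier expansion of Theorem~\ref{thm-3} generated from those modified data.

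The next step is to regard $\Delta_j^\fk:=\hat{z}_j^\fk-\widetilde{z}_j^\fk$ as the outcome of running the construction of Section~\ref{sec-proof1} with input data $(-\fr^N,-\dot{\fr}^N)$. By the linearity of \eqref{eq-construction-offdiag}, \eqref{eq-construction-diag-der} and \eqref{eq-construction-init} in the input data, and since the bilinear right-hand side of \eqref{eq-modsystem-coeff} contributes only through its symmetric linearisation around either $\widetilde{\fz}$ or $\hat{\fz}$ (both bounded uniformly by Lemma~\ref{lemma-bounds}), the recursion defining $\Delta_j^\fk$ is formally identical to the one defining $z_{j,m}^\fk$, but now driven by initial data of size $\calO(\de^{2K(1-2\nu)})$ in the weighted $\norm{\cdot}$-norm. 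A line-by-line repetition of the proof of Lemma~\ref{lemma-bounds} for $\Delta_j^\fk$ in place of $z_{j,m}^\fk$ then yields the asserted bound; the filter-induced factor $\sinc(\tau\om_j)^{-1}$ that appears in \eqref{eq-construction-init} is absorbed, exactly as in that proof, using the symplecticity identity \eqref{eq-symplectic} together with the non-resonance condition \eqref{eq-nonres-gesamt}.

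The main obstacle will be the book-keeping required to re-expand $\hat{z}_j^\fk(s)=z_j^\fk(s+\de^\nu)\e^{\iu(\fk\cdot\fomega)}$ in the ansatz \eqref{eq-modfunc-exp}, tracking how the Taylor shift by $\de^\nu$ in the slow variable $s$ redistributes the powers of $\de^{1-2\nu}$ among the coefficient polynomials $\hat{z}_{j,m}^\fk$, so that the recursion for $\Delta_j^\fk$ can be set up degree by degree in $m$. This is the point at which the discrete analysis departs most sharply from the continuous-time comparison in \cite{Gauckler2012}, in which only a single $\de^\nu$-small perturbation enters rather than several shifted evaluations $z_j^\fk(s\pm\de^\nu\tau)$ coupled through the discrete modulation equation.
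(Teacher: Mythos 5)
Your overall strategy is the same as the paper's: interpret the shifted, phase-rotated functions as solutions of the same modulation equations, linearize the bilinear term via $a_1a_2-\widetilde a_1\widetilde a_2=(a_1-\widetilde a_1)a_2+\widetilde a_1(a_2-\widetilde a_2)$, drive the resulting difference recursion by the interface mismatch coming from the remainder of Theorem~\ref{thm-3}, and rerun the machinery of Lemma~\ref{lemma-bounds}. However, there is a genuine gap at the decisive step. You claim the difference recursion is ``driven by initial data of size $\calO(\de^{2K(1-2\nu)})$'' and closed by a line-by-line repetition of Lemma~\ref{lemma-bounds}. That is false at the level of the individual coefficients $z_{j,m}^\fk$ of the ansatz \eqref{eq-modfunc-exp}, which is the level at which Lemma~\ref{lemma-bounds} and its induction on $m$ operate. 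Already for $j=1$, $m=1$, $\fk=\skla{1}$ (take $\nu=0$ as in Example~\ref{ex-constr}): $\widetilde z_{1,1}^{\skla 1}$ is built from $\iu\om_1u_1^N+\dot u_1^N$, and substituting the modulated Fourier expansion of $(u_1^N,\dot u_1^N)$ shows that $\widetilde z_{1,1}^{\skla 1}$ differs from $z_{1,1}^{\skla 1}\e^{\iu\om_1}$ by $\calO(\de)$ — the discrepancy is fed by the \emph{next} expansion level $\de^2 z_{1,2}^{\skla 1}$ and by the off-diagonal terms, not by $r_1^N=\calO(\de^{2K})$. Hence the weighted term $\de^{m(1-2\nu)}(z_{j,m}^\fk(\cdot+\de^\nu)\e^{\iu(\fk\cdot\fomega)}-\widetilde z_{j,m}^\fk)$ is only $\calO(\de^{(m+1)(1-2\nu)})$, far larger than the target $\de^{2K(1-2\nu)}$; the lemma's bound emerges only through cancellation \emph{across} expansion levels.

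The missing idea is the device the paper uses to capture this cancellation: one works with the truncated expansions $\eklabig{z_j^\fk}^\ell$ and $\eklabig{\widetilde z_j^\fk}^\ell$ and the partial-sum differences $\eklabig{f_j^\fk}^\ell$, for which the truncated remainders satisfy the level-dependent bound $\norm{\fOmega\eklabig{\fr^N}^\ell}+\norm{\eklabig{\dot\fr^N}^\ell}\lesssim\de^{(\ell+1)(1-2\nu)}$. Deriving the analogue of \eqref{eq-construction-init} for $\eklabig{f_j^{\pm\jvec}}^\ell$ with these truncated remainders as the driving data, and inducting on the truncation level $\ell$ (the bilinear term couples $\eklabig{f}^{\ell-m_2}$ with $\de^{m_2(1-2\nu)}$-weighted known coefficients, preserving the order), one obtains the intermediate bounds $\lesssim\de^{2(\ell+1)(1-2\nu)}$ and, at $\ell=2K-1$, the statement of the lemma. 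Your final paragraph correctly senses that the re-expansion of the shifted functions is where the difficulty lies, but labelling it ``book-keeping'' understates it: without the truncation-level induction your argument does not close.
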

\begin{proof}
Recall that the functions $z_j^\fk$ and $\widetilde{z}_j^\fk$ are constructed with the expansion \eqref{eq-modfunc-exp}. We consider here the truncated expansions
\begin{align*}
\eklabig{ z_j^\fk }^\ell(\de^\nu t) &= \de^{m(j,\fk)} \sum_{m=m(j,\fk)}^\ell \de^{(m-m(j,\fk))(1-2\nu)} z_{j,m}^\fk(\de^\nu t) ,\\
\eklabig{ \widetilde{z}_j^\fk }^\ell(\de^\nu t) &= \de^{m(j,\fk)} \sum_{m=m(j,\fk)}^\ell \de^{(m-m(j,\fk))(1-2\nu)} \widetilde{z}_{j,m}^\fk(\de^\nu t)
\end{align*}
for $\ell=1,\dots,2K-1$.
Note that these truncations coincide for $\ell=2K-1$ with $z_j^\fk$ and $\widetilde{z}_j^\fk$, respectively. We therefore study the differences
\begin{align*}
\eklabig{f_j^\fk}^\ell(\de^\nu t) & = \de^{-2m(j,\fk)\nu} \klaBig{ \eklabig{z^{\fk}_{j}}^\ell(\de^\nu t+\de^\nu) \e^{\iu (\fk\cdot\fomega)} - \eklabig{\widetilde{z}^{\fk}_{j}}^\ell(\de^\nu t) }\\
& = \sum_{m=m(j,\fk)}^{\ell}\de^{m(1-2\nu)}(z_{j,m}^{\fk}(\de^\nu t+\de^\nu) \e^{\iu (\fk\cdot\fomega)}-\tilde{z}_{j,m}^{\fk}(\de^\nu t)).
\end{align*} 

(a) We first derive equations for these differences from the defining equations \eqref{eq-modsystem-coeff} and \eqref{eq-construction-init}. For $\fk\not=\pm \langle j\rangle$ we multiply equation \eqref{eq-modsystem-coeff} by $\delta^{m(1-2\nu)}$ and sum over $m$ for $z_{j,m}^{\fk}$ and its derivatives evaluated at $\de^{\nu}t+\de^{\nu}$ and in the same way for $\widetilde{z}_{j,m}^{\fk}$ at $\delta^{\nu}t$. By multiplying the first sum with $\e^{\iu (\fk\cdot\fomega)}$, we get for the difference of these sums
\begin{align*}
&4s_{\langle j\rangle-\fk}s_{\langle j\rangle+\fk} \eklabig{f^{\fk}_{j}}^\ell + 2\iu\tau \de^\nu s_{2\fk} \eklabig{\dot{f}^{\fk}_{j}}^\ell + \tau^2 \de^{2\nu} c_{2\fk} \eklabig{\ddot{f}^{\fk}_{j}}^\ell + \ldots\\
 & = \tau^2\psi(\tau\omega_j) \sum_{\fk^1+\fk^2=\fk} \sum_{j_1+j_2\equiv j}  \sum_{m=m(j,\fk)}^{\ell}\sum_{m_{1}+m_{2}=m} \de^{m(1-2\nu)+2(m(j_1,\fk^1)+m(j_2,\fk^2)-m(j,\fk))\nu}\\
 &\qquad\qquad \phi(\tau\omega_{j_{1}})\phi(\tau\omega_{j_{2}}) \Bigl(z_{j_{1},m_{1}}^{\fk^{1}}\e^{\iu (\fk^{1}\cdot\fomega)}z_{j_{2},m_{2}}^{\fk^{2}}\e^{\iu (\fk^{2}\cdot\fomega)} - \widetilde{z}_{j_1,m_1}^{\fk^1} \widetilde{z}_{j_2,m_2}^{\fk^2}\Bigr),
\end{align*}
 where $\widetilde{z}_{j_{i},m_{i}}^{\fk^{i}}$ and $f_j^\fk$ are evaluated at $\de^{\nu}t$ and $z_{j_{i},m_{i}}^{\fk^{i}}$ are evaluated at $\de^{\nu}t+\de^\nu$ for $i=1,2$.
We use $a_1a_2-\widetilde{a}_1\widetilde{a}_2=(a_1-\widetilde{a}_1)a_2+\widetilde{a}_1(a_2-\widetilde{a}_2)$, the symmetry of the sums in $j_l$ and $\fk^l$, the convention $z_{j,m'}^\fk=0$ and $\widetilde{z}_{j,m'}^\fk=0$ for $m'<m(j,\fk)$, and $\sum_{m=1}^\ell \sum_{m_1+m_2=m} \% = \sum_{m_2=1}^{\ell-1} \sum_{m_1=1}^{\ell-m_2} \%$. This yields
\begin{align*}
&4s_{\langle j\rangle-\fk}s_{\langle j\rangle+\fk} \eklabig{f^{\fk}_{j}}^\ell + 2\iu\tau \de^\nu s_{2\fk} \eklabig{\dot{f}^{\fk}_{j}}^\ell + \tau^2 \de^{2\nu} c_{2\fk} \eklabig{\ddot{f}^{\fk}_{j}}^\ell + \ldots\\
 &\qquad = \tau^2\psi(\tau\omega_j) \sum_{\fk^1+\fk^2=\fk} \sum_{j_1+j_2\equiv j} \delta^{2(m(j_1,\fk^1)+m(j_2,\fk^2)-m(j,\fk))\nu}\\
 &\qquad\qquad \sum_{m_2=m(j_2,\fk^2)}^{\ell-1} \de^{m_2(1-2\nu)} \phi(\tau\omega_{j_{1}}) \eklabig{f^{\fk^1}_{j_1}}^{\ell-m_2} \phi(\tau\omega_{j_{2}}) \Bigl(\widetilde{z}_{j_2,m_2}^{\fk^2} + z_{j_2,m_2}^{\fk^2}\e^{\iu(\fk^{2}\cdot\fomega)} \Bigr).
\end{align*}
Using the truncated expansion $\ekla{ z_j^\fk }^\ell$ instead of ansatz \eqref{eq-modfunc-exp} we get as in Section \ref{sec-proof1} corresponding to Theorem \ref{thm-3}
\begin{align*} 
u_j^N &= \sum_{\fk\in\calK_j} \eklabig{z_j^\fk}^\ell(\de^\nu) \e^{\iu (\fk\cdot\fomega)} + \eklabig{r_j^N}^\ell\\
\dot{u}_j^N &= \bigl(\tau\sinc(\tau\omega_j)\bigr)^{-1} \sum_{\fk\in\calK_j} \Bigl( \iu s_{2\fk} \eklabig{z_j^\fk}^\ell(\de^\nu) \e^{\iu (\fk\cdot\fomega)} + \tau\de^\nu c_{2\fk} \eklabig{\dot{z}_j^\fk}^\ell(\de^\nu) \e^{\iu (\fk\cdot\fomega)}\\
 &\qquad\qquad\qquad\qquad\qquad\qquad + \tfrac{\iu}2 \tau^2\de^{2\nu} s_{2\fk}\eklabig{\ddot{z}_j^\fk}^\ell(\de^\nu) \e^{\iu (\fk\cdot\fomega)} + \dots \Bigr)
 + \eklabig{\dot{r}_j^N}^\ell,
\end{align*}
where the remainders $\ekla{r_j^N}^\ell$ and $\ekla{\dot{r}_j^N}^\ell$ of the truncated expansion satisfy
\[
\norm{\fOmega\ekla{\fr^N}^\ell} + \norm{\ekla{\dot{\fr}^N}^\ell} \lesssim \de^{(\ell+1)(1-2\nu)}.
\]
Note that the bound in Theorem \ref{thm-3} in terms of $\de$ equals $C\de^{(2K-1+1)(1-2\nu)}$ for the expansion truncated at $2K-1$. Inserting this into \eqref{eq-construction-init} for $\widetilde{z}$ (with $u_j^N$ and $\dot{u}_j^N$ instead of $u_j^0$ and $\dot{u}_j^0$) yields, for $\ell \ge e(j)$,
\begin{align*}
2\iu \om_j \eklabig{f_{j}^{\pm\jvec}}^{\ell} &= -\iu \!\! \sum_{\substack{\fk\in\calK_j\\ \fk\ne\jvec, \, \fk\ne-\jvec}} \!\! \de^{2(m(j,\fk)-e(j))\nu} \klabigg{ \om_j \pm (\fk\cdot\fomega) \, \frac{\sinc(\tau(\fk\cdot\fomega))}{\sinc(\tau\om_j)} } \eklabig{f_{j}^\fk}^\ell\\
 &\qquad \mp \sum_{\fk\in\calK_j} \frac{\de^{2(m(j,\fk)-e(j))\nu}}{\sinc(\tau\om_j)} \klaBig{ \de^\nu c_{2\fk} \eklabig{\dot{f}_{j}^\fk}^\ell + \tfrac{\iu}2 \tau \de^{2\nu} s_{2\fk} \eklabig{\ddot{f}_{j}^\fk}^\ell + \dots } \\
 &\qquad + \de^{-2e(j)\nu}\klaBig{ \iu \om_j  \eklabig{r_j^N}^\ell \pm \eklabig{\dot{r}_j^N}^\ell},
\end{align*}
where $f_j^\fk$ is evaluated at $0$.
 
(b) 
%As in Sections \ref{subsec-defect} and \ref{subsec-remainder}, we obtain for the remainders $\ekla{r_j^N}^\ell$ and $\ekla{\dot{r}_j^N}^\ell$ of the truncated expansions the bound
%\[
%\norm{\fOmega\ekla{\fr^N}^\ell} + \norm{\ekla{\dot{\fr}^N}^\ell} \lesssim \de^{(\ell+1)(1-2\nu)}.
%\]
%\bred TODO: Hier hat sich der Referee auch noch beschwert. \eref
Based on the equations derived in (a), which correspond to equations \eqref{eq-modsystem-coeff} and \eqref{eq-construction-init}, it can then be shown as in the proof Lemma \ref{lemma-bounds} by induction on $\ell$ that
\[
\sum_{j=-M}^{M-1} \sigma_j \klabigg{ \sum_{\fk\in\calK_j} \gamma_j^\fk \normvbig{\eklabig{f_j^\fk}^\ell}_0 }^2 \lesssim \delta^{2(\ell+1)(1-2\nu)}.
\]
For $\ell=2K-1$ this is the bound as stated in the lemma.
\end{proof}

Now we deduce Theorem \ref{thm-6} from this lemma. We consider the difference $\calE_l(1) - \widetilde{\calE}_l(0)$ of almost-invariants in the form \eqref{eq-calE-alt}. This difference consists of differences of the form
\[
z_{-j}^{-\fk}(\de^\nu) \klabig{z_j^\fk}^{(p)}(\de^\nu)-\widetilde{z}_{-j}^{-\fk}(0) \klabig{\widetilde{z}_j^\fk}^{(p)}(0)
\]
with $p=0,1,2,\ldots$ denoting derivatives. We rewrite these differences, using
\[
a_1a_2-\widetilde{a}_1\widetilde{a}_2=(a_1\e^{-\iu(\fk\cdot\fomega)}-\widetilde{a}_1)a_2\e^{\iu(\fk\cdot\fomega)} + \widetilde{a}_1(a_2\e^{\iu(\fk\cdot\fomega)}-\widetilde{a}_2).
\]
The statement of Theorem \ref{thm-6} is then obtained by inserting ansatz \eqref{eq-modfunc-exp} for $z_j^\fk$ and $\widetilde{z}_j^\fk$ in $\sum_{l=0}^M \sigma_l \de^{-2e(l)} \absbig{ \calE_l(1) - \widetilde{\calE}_l(0) }$ and applying Lemmas \ref{lemma-bounds} and \ref{lemma-interface} together with~\eqref{eq-sumcalE-aux}.  
Note that by the previous Lemma \ref{lemma-interface} we have $z_j^\fk(\cdot+\de^\nu) \e^{\iu(\fk\cdot\fomega)} - \widetilde{z}_j^\fk = \mathcal{O}(\de^{e(l)+(2K-e(l))(1-2\nu)})$ if $k_l\ne 0$, and by \eqref{eq-bound-modfun-easy} we have $z_j^\fk=\mathcal{O}(\de^{e(l)})$ and $\widetilde{z}_j^\fk=\mathcal{O}(\de^{e(l)})$ if $k_l\ne 0$.  This yields the claimed order $\de^{2e(l)+K(1-2\nu)}$ of the transition in the $l$th almost-invariant energy, i.e., of $\calE_l(1) - \widetilde{\calE}_l(0) $, and it yields the additional statement for $l=1$. 
%\bred TODO: Hier hat sich der Referee auch noch beschwert. \eref

\subsection{Controlling mode energies by almost-invariant energies: Proof of Theorem \ref{thm-7}}

The proof of Theorem~\ref{thm-7} is done in three steps. We first show in Lemma \ref{lemma-dom-modeen} below that the goal of controlling mode energies can be achieved by controlling certain dominant terms in the modulated Fourier expansion. Then we show, in Lemma \ref{lemma-diag-dom} below, that these dominant terms can be controlled if only the diagonal ones among them are under control. Finally, we show in Lemma \ref{lemma-calE-diag} below that these diagonal dominant terms can be described essentially by the almost-invariant energies of the modulated Fourier expansion. Throughout, we assume the conditions of Theorem~\ref{thm-7} to be fulfilled. 

\begin{lemma}\label{lemma-dom-modeen}
For $0\le t\le 1$ assume that
\begin{align*}
\abs{\sinc(\tau\om_j)}^{-1} \ga_j^\fk \absbig{z_{j,e(j)}^\fk(\de^\nu t)} &\le \mathcal{C}_0 \myfor \fk\ne\pm\jvec, \, \abs{j}\le K, \, \mu(\fk)\le K,\\
\sum_{j=-M}^{M-1} \si_j \om_j^2 \absbig{z_{j,e(j)}^{\pm \jvec}(\de^\nu t)}^2 &\le \mathcal{C}_0 .
\end{align*}
Then, for $0\le t_n=n\tau \le 1$,
\[
\sum_{l=0}^M \sigma_l \delta^{-2 e(l)} E_{l}^n \le \mathcal{C}
\]
and
\[
\si_1 \de^{-2e(1)} \absBig{ E_{1}^n - \omega_{1}^{2}\Bigl(\absbig{z_{1}^{\skla{1}}(\de^\nu t_n)}^{2}+ \absbig{z_{1}^{-\skla{1}}(\de^\nu t_n)}^{2}\Bigr) } \le \mathcal{C} \de^{2(1-2\nu)}, 
\]
where $\mathcal{C}$ depends on $\mathcal{C}_0$, but is independent of $C_0$ of \eqref{eq-modeenergies-init} if $\de^{1-2\nu}$ is sufficiently small.
\end{lemma}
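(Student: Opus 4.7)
The plan is to compare the mode energy $E_l^n = \tfrac12|\omega_l u_l^n|^2 + \tfrac12|\dot u_l^n|^2$ with the modulated Fourier expansion of Theorem \ref{thm-3}, isolating the dominant contribution coming from the leading coefficients $z_{j,e(j)}^\fk$ in the expansion \eqref{eq-modfunc-exp}. All other contributions (higher-order tails of \eqref{eq-modfunc-exp}, the remainder $\fr^n$, and for the refined $E_1^n$ bound also the off-diagonal modulation functions) carry an additional factor $\delta^{1-2\nu}$ or more, and are therefore absorbed once $\delta^{1-2\nu}$ is sufficiently small; this is precisely what makes the final constant $\calC$ depend only on $\calC_0$ and not on $C_0$.

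For the first bound, I decompose $u_l^n = \widetilde u_l^n + r_l^n$ and $\dot u_l^n = \dot{\widetilde u}_l^n + \dot r_l^n$ via Theorem \ref{thm-3} and \eqref{align:dot-u-mfe}. Writing $\delta^{-2e(l)} = \delta^{-4e(l)\nu}\cdot\delta^{-2e(l)(1-2\nu)}$ and using $e(l)\le K$ together with the definition of $\norm\cdot$, the remainder contributes
\[
\sum_l \sigma_l\delta^{-2e(l)}\bigl(\omega_l^2|r_l^n|^2 + |\dot r_l^n|^2\bigr) \le \delta^{-2K(1-2\nu)}\bigl(\norm{\fOmega\fr^n}^2 + \norm{\dot{\fr}^n}^2\bigr) \lesssim \delta^{2K(1-2\nu)}.
\]
For the principal term I bound $|\omega_l\widetilde u_l^n|\le\sum_\fk\gamma_l^\fk|z_l^\fk|$ and, using that $\sin(\tau(\fk\cdot\fomega))/\sin(\tau\omega_l)$ equals $\pm 1$ for $\fk=\pm\jvec$ (so that the $|\sinc(\tau\omega_l)|^{-1}$ cancels) but is bounded by $\gamma_l^\fk/(\omega_l|\sinc(\tau\omega_l)|)$ otherwise, derive an analogous bound for $|\dot{\widetilde u}_l^n|$. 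This structure matches exactly the two parts of the hypothesis (diagonal $\fk=\pm\jvec$ without $|\sinc|^{-1}$ vs.\ off-diagonal $\fk$ with $|\sinc|^{-1}$). Inserting \eqref{eq-modfunc-exp}, the leading coefficient $z_{l,e(l)}^\fk$ is controlled by hypothesis, while the higher-order tails (and the derivative $\dot z_l^\fk$ with its compensating $\delta^\nu$) are controlled by Lemma \ref{lemma-bounds} and carry the small factor $\delta^{1-2\nu}$.

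For the refined bound on $E_1^n$, the decisive observation is that, by \eqref{eq-specialzero} and the combinatorics of the construction of Section \ref{sec-proof1}, one has $z_{1,1}^\fk\equiv 0$ for every $\fk\ne\pm\langle 1\rangle$ and $z_{1,2}^\fk\equiv 0$ for \emph{every} $\fk$: the right-hand side of \eqref{eq-modsystem-coeff} vanishes for $(j,m)=(1,1)$ (since $z_{j,0}^\fk\equiv 0$), and for $(j,m)=(1,2)$ since the only non-vanishing products $z_{j_i,1}^{\fk_i}$ require $\fk_i=\pm\langle 1\rangle$, $j_i=\pm 1$, and their sums $j_1+j_2\in\{0,\pm 2\}$ never equal $1$ modulo $2M$. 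Consequently, writing
\begin{align*}
\widetilde u_1^n &= z_1^{\langle 1\rangle}(\delta^\nu t_n)\e^{\iu\omega_1 t_n} + z_1^{-\langle 1\rangle}(\delta^\nu t_n)\e^{-\iu\omega_1 t_n} + \rho_1^n,\\
\dot{\widetilde u}_1^n &= \iu\omega_1\bigl(z_1^{\langle 1\rangle}(\delta^\nu t_n)\e^{\iu\omega_1 t_n} - z_1^{-\langle 1\rangle}(\delta^\nu t_n)\e^{-\iu\omega_1 t_n}\bigr) + \dot\rho_1^n,
\end{align*}
a short calculation (using Taylor expansion for $\dot{\widetilde u}_1^n$ and the structural zeros above) yields $|\rho_1^n|,\,|\omega_1^{-1}\dot\rho_1^n|\lesssim\delta^{3-4\nu}$. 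The algebraic identity $|a+b|^2+|a-b|^2=2(|a|^2+|b|^2)$ then cancels the $\e^{\pm 2\iu\omega_1 t_n}$ cross-oscillations exactly between $|\omega_1\widetilde u_1^n|^2$ and $|\dot{\widetilde u}_1^n|^2$, leaving $\omega_1^2(|z_1^{\langle 1\rangle}|^2+|z_1^{-\langle 1\rangle}|^2)$ plus cross-terms of order $\omega_1^2\delta\cdot\delta^{3-4\nu}=\omega_1^2\delta^{4-4\nu}$, precisely the claimed bound.

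The main obstacle will be the careful bookkeeping of the $|\sinc(\tau\omega_l)|^{-1}$ factors arising from the velocity approximation \eqref{align:dot-u-mfe}: these must be matched correctly against the diagonal/off-diagonal split of the hypothesis and against \eqref{eq-bound-modfun-sumall}--\eqref{eq-bound-modfun-sumoffdiag}. A secondary but essential point is the verification of the structural zeros $z_{1,1}^\fk\equiv 0$ for $\fk\ne\pm\langle 1\rangle$ and $z_{1,2}^\fk\equiv 0$ used in the sharp $E_1^n$ estimate, which rests on the specific combinatorics of the quadratic nonlinearity $u^2$ and the index set $\calK$.
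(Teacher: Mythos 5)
Your proposal is correct and follows essentially the same route as the paper's proof: decompose $u_l^n$ and $\dot u_l^n$ via the modulated Fourier expansion, absorb the remainder with Lemma \ref{lemma-difference}, isolate the dominant coefficients $z_{l,e(l)}^\fk$ (matching the $\abs{\sinc(\tau\om_l)}^{-1}$ factors from the velocity approximation against the diagonal/off-diagonal split of the hypothesis exactly as in the paper), and control the tails with Lemma \ref{lemma-bounds}. The refined $E_1^n$ estimate likewise mirrors the paper's argument, with your structural zeros $z_{1,1}^\fk$ ($\fk\ne\pm\skla{1}$) and $z_{1,2}^\fk$ playing the role of the paper's $\eta^n$, $\vartheta^n$ bounds of order $\de^{1+2(1-2\nu)}$, and your identity $\abs{a+b}^2+\abs{a-b}^2=2(\abs{a}^2+\abs{b}^2)$ is just a compact form of the paper's explicit expansion of $2E_1^n$.
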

\begin{proof}
We consider the two terms $\tfrac12 \omega_l^2 \abs{u_{l}^n}^2$ and $\tfrac12 \abs{\dot{u}_{l}^n}^2$ in $E_l^n$ separately.

(a) For the first term, we use $\abs{u_{l}^n} \le \abs{u_{l}^n-\widetilde{u}_{l}^n} +\abs{\widetilde{u}_{l}^n}$ to get 
\[
\frac12 \sum_{l=0}^M \si_l \omega_l^2 \de^{-2e(l)} \abs{u_{l}^n}^2 \le \sum_{l=0}^M \si_l \omega_l^2 \de^{-2e(l)} \abs{u_{l}^n-\widetilde{u}_{l}^n}^2 + \sum_{l=0}^M \si_l \omega_l^2 \de^{-2e(l)} \abs{\widetilde{u}_{l}^n}^2,
\]
where the first sum is bounded by $C\de^{2K(1-2\nu)}$ corresponding to Lemma \ref{lemma-difference}. With the notation $\widetilde{u}$ of \eqref{eq-mfe} and the expansion \eqref{eq-modfunc-exp} for $z_{l}^\fk$ we find
\[
\abs{\widetilde{u}_{l}^n} \le  \sum_{\fk\in\calK_l} \de^{m(l,\fk)}\absbigg{\sum_{m=m(l,\fk)}^{2K-1}\de^{(m-m(l,\fk))(1-2\nu)}z_{l,m}^\fk(\de^\nu t_n)},
\]
where the dominant terms are $\de^{m(l,\fk)}z_{l,m(l,\fk)}^\fk$. Note that $m(l,\fk)\ge e(l)$. This yields
\[
\sum_{l=0}^M \si_l \omega_l^2 \de^{-2e(l)} \abs{\widetilde{u}_{l}^n}^2 \le \mathcal{C} + C \de^{2(1-2\nu)},
\]
with constants $\mathcal{C}$ and $C$. The constant $\mathcal{C}$ is the result of estimating the dominant terms $z_{l,m(l,\fk)}^\fk$ with $m(l,\fk)=e(l)$ using the assumption on these terms (note that $z_{l,m(l,\fk)}^\fk=0$ for $\abs{l}>K$ and $\fk\ne\pm\lvec$ by \eqref{eq-calK-case1} and \eqref{eq-calK-case2} since $m(l,\fk)=e(l)=K$ in this case). The constant $\mathcal{C}$ thus depends only on $\mathcal{C}_0$ and not on $C_0$ of \eqref{eq-modeenergies-init}. The second term $C \de^{2(1-2\nu)}$ is the result of estimating the terms $z_{l,m(l,\fk)}^\fk$ with $m(l,\fk)>e(l)$ using Lemma \ref{lemma-bounds}. The constant $C$ thus depends on $C_0$ of \eqref{eq-modeenergies-init}.

(b) For the second term, we proceed similarly. Using $\abs{\dot{u}_{l}^n} \le \abs{\dot{u}_{l}^n-\dot{\widetilde{u}}_{l}^n} + \abs{\dot{\widetilde{u}}_{l}^n}$ and again Lemma \ref{lemma-difference} to control the difference $\abs{\dot{u}_{l}^n-\dot{\widetilde{u}}_{l}^n}$, we just have to estimate $ \sum_{l=0}^M \si_l  \de^{-2e(l)} \abs{\dot{\widetilde{u}}_{l}^n}^2$.
We start from
\[
\absbig{\dot{\widetilde{u}}_{l}^n}  \le  \sum_{\fk\in\calK_l} \abs{\fk\cdot\fomega} \, \frac{\abs{\sinc(\tau(\fk\cdot\fomega))}}{\abs{\sinc(\tau\omega_l)}} \, \absbig{z_l^\fk(\de^\nu t_n)} + \abs{\sinc(\tau\omega_l)}^{-1} \sum_{\fk\in\calK_l} \normvbig{ \dot{z}_l^\fk }_{t_n},
\]
which is obtained using the velocity approximation $\dot{\widetilde{u}}$ given by \eqref{align:dot-u-mfe} and a Taylor expansion of $z_l^\fk(\ee^{\nu/2}t_{n\pm 1})$. We then insert the expansion \eqref{eq-modfunc-exp} of $z_l^\fk$ and isolate the dominant terms $\de^{m(l,\fk)} z_{l,m(l,\fk)}^\fk$. Using the assumption on the terms $z_{l,e(l)}^\fk$ and noting that $\dot{z}_{l,e(l)}^\fk=0$ by \eqref{eq-degree}, this yields similarly as in (a)
\[
\sum_{l=0}^M \si_l \de^{-2e(l)} \absbig{\dot{\widetilde{u}}_{l}^n}^2 \le \mathcal{C} + C \de^{2(1-2\nu)}.
\]
Combining this estimate with the corresponding estimate in (a), we get the estimate of the lemma, provided that $\de^{1-2\nu}$ is sufficiently small. 

(c) The additional estimate of the difference $E_{1}^n - \omega_{1}^{2}(\abs{z_{1}^{\skla{1}}(\de^\nu t_n)}^{2}+ \abs{z_{1}^{-\skla{1}}(\de^\nu t_n)}^{2})$ is obtained as follows. We first note that, as in \cite{Gauckler2012},
\begin{align*}
2E_1^{n} &= |\omega_1 u_1^{n}|^2 + |\dot{u}_1^{n}|^2 \\
&= \omega_1^2 \bigl| z_1^{\skla{1}}(\de^{\nu} t_{n})\e^{\iu\omega_1t_{n}} + z_1^{-\skla{1}}(\de^{\nu} t_{n})e^{-\iu\omega_1t_{n}} \bigr|^2  \\
&\ \ + 2\omega_1^2\mathrm{Re}\Bigl( \overline{\eta^{n}} \bigl(z_1^{\skla{1}}(\de^{\nu} t_{n})\e^{\iu\omega_1t_{n}} + z_1^{-\skla{1}}(\de^{\nu} t_{n})\e^{-\iu\omega_1t_{n}}\bigr)\Bigr) + \omega_1^2 |\eta^{n}|^2\\
 &\ \ + \omega_1^2 \bigl| \iu z_1^{\skla{1}}(\de^{\nu} t_{n})\e^{\iu\omega_1t_{n}} -\iu z_1^{-\skla{1}}(\de^{\nu} t_{n})\e^{-\iu\omega_1t_{n}} \bigr|^2\\
 &\ \ + 2\omega_1\mathrm{Re}\Bigl( \overline{\vartheta^{n}} \bigl(\iu z_1^{\skla{1}}(\de^{\nu} t_{n})\e^{\iu\omega_1t_{n}} -\iu z_1^{-\skla{1}}(\de^{\nu} t_{n})\e^{-\iu\omega_1t_{n}}\bigr)\Bigr) + |\vartheta^{n}|^2,
\end{align*}
but now with
% ++++++++++++++++++++++++++++++++++++++++++++++++++++++++++++++++++++++++++++++++++++++
	\begin{align*}
		\eta^{n} & = \sum_{\pm\skla{1}\ne \fk\in\calK_1} z_{1}^{\fk}(\de^{\nu}t_{n}) \e^{\iu (\fk\cdot\fomega)t_{n}} +\bigl(u_{1}^{n}-\widetilde{u}_1^n\bigr)\\
		\vartheta^{n} & = \sum_{\pm\skla{1}\ne \fk\in\calK_1} \iu(\fk\cdot \fomega) \, \frac{\sinc(\tau(\fk\cdot\fomega))}{\sinc(\tau\omega_{1})} \, z_{1}^{\fk}(\de^{\nu}t_{n})\e^{\iu (\fk\cdot\fomega)t_{n}} +\bigl(\dot{u}_{1}^{n}-\dot{\widetilde{u}}_1^n\bigr)\\
		& \hspace{2cm} + (\sinc(\tau\omega_{1}))^{-1}\sum_{\fk\in\calK_{1}}\big(c_{2\fk}\de^{\nu}\dot{z}_{1}^{\fk}(\de^{\nu}t_{n})+\dots \big) \e^{\iu (\fk\cdot\fomega)t_{n}},
	\end{align*}
%\bred TODO: $\sinc$s in Antwort an Referee kurz erw\"ahnen? \eref 
which is again obtained using the velocity approximation $\dot{\widetilde{u}}$ given by \eqref{align:dot-u-mfe} for $j=1$ and a Taylor expansion of $z_1^\fk(\ee^{\nu/2}t_{n\pm 1})$. Inserting the expansion \eqref{eq-modfunc-exp} for $j=1$,  we get $\abs{\eta^{n}}\lesssim \de^{1+2(1-2\nu)}$ and $\abs{\vartheta^{n}}\lesssim \de^{1+2(1-2\nu)}$ by Lemma \ref{lemma-bounds} and Lemma \ref{lemma-difference}, since $z_{1,1}^{\fk}=z_{1,2}^{\fk}=0$ for $\fk\ne\pm\skla{1}$ and $\dot{z}_{1,1}^{\fk}=\dot{z}_{1,2}^{\fk}=0$ for all $\fk$. This yields the claimed estimate. %\bred TODO: die letzten beiden S\"atze sind laut Referee noch unklar. \eref
\end{proof}

\begin{lemma}\label{lemma-diag-dom}
For $0\le t\le 1$ assume that
\[
\absbig{z_{j,e(j)}^{\pm\jvec}(\de^\nu t)} \le \mathcal{C}_0 \myfor \abs{j}\le K.
\]
Then,
\[
\abs{\sinc(\tau\om_j)}^{-1} \ga_j^\fk \absbig{z_{j,e(j)}^\fk(\de^\nu t)} \le \mathcal{C} \myfor \fk\ne\pm\jvec, \, \abs{j}\le K, \, \mu(\fk)\le K
\]
where $\mathcal{C}$ depends on $\mathcal{C}_0$ but is independent of $C_0$ of \eqref{eq-modeenergies-init}.
\end{lemma}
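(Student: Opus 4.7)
\textbf{Proof plan for Lemma \ref{lemma-diag-dom}.}
I would argue by induction on the energy level $e(j)\in\{1,\dots,K\}$, strengthened so as to cover every $\fk\in\calK_j$ with $\fk\ne\pm\jvec$ and $|j|\le K$ (not only those with $\mu(\fk)\le K$), which will be needed to close the recursion. The base case $e(j)=1$ (so $|j|=1$) is immediate: as in Example~\ref{ex-constr}, the right-hand side of \eqref{eq-modsystem-coeff} at $m=1$ vanishes for $\fk\ne\pm\jvec$, and hence $z_{j,1}^\fk\equiv 0$. For the inductive step $2\le e(j)\le K$, I first note via \eqref{eq-degree} that $z_{j,e(j)}^\fk$ is a polynomial of degree at most $e(j)-\max(e(j),\mu(\fk))\le 0$, hence a constant in its argument. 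Its derivatives therefore vanish on the left-hand side of \eqref{eq-modsystem-coeff} at $m=e(j)$, and the equation reduces to the purely algebraic identity
\[
4\,s_{\jvec-\fk}\,s_{\jvec+\fk}\,z_{j,e(j)}^\fk = \tau^2\psi(\tau\om_j)\,p_{j,\fk},
\]
where $p_{j,\fk}$ collects the products $\phi(\tau\om_{j_1})z_{j_1,m_1}^{\fk^1}\phi(\tau\om_{j_2})z_{j_2,m_2}^{\fk^2}$ with $\fk^1+\fk^2=\fk$, $j_1+j_2\equiv j\bmod 2M$, and $m_1+m_2=e(j)$.

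The structural observation driving the induction is that on this balance only \emph{dominant} subcomponents contribute: $m_i=m(j_i,\fk^i)=e(j_i)$ with $e(j_1)+e(j_2)=e(j)$, so in particular $e(j_i)<e(j)$. This follows from the chain $m_i\ge m(j_i,\fk^i)\ge e(j_i)$ combined with the convolution subadditivity $e(j_1)+e(j_2)\ge e(j)$, both of which are forced to equality by $m_1+m_2=e(j)$. The strong non-resonance condition \eqref{eq-nonres-strong} now applies, since $\fk\ne\pm\jvec$, $|j|\le K$ and $(j,\fk)\in\calK$, giving $|4\,s_{\jvec-\fk}\,s_{\jvec+\fk}|\ge 16\gamma^2\tau^2$. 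Together with the symplecticity identity $\psi=\sinc\cdot\phi$ from \eqref{eq-symplectic} and the boundedness of $\phi$, this produces
\[
|z_{j,e(j)}^\fk|\lesssim |\sinc(\tau\om_j)|\,|p_{j,\fk}|,
\]
so the $|\sinc(\tau\om_j)|$ factor absorbs the $|\sinc(\tau\om_j)|^{-1}$ in the target estimate.

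To close the induction I would distribute the weight via $\ga_j^\fk\lesssim \ga_{j_1}^{\fk^1}\ga_{j_2}^{\fk^2}$ and bound each factor $\ga_{j_i}^{\fk^i}|z_{j_i,e(j_i)}^{\fk^i}|$ separately: by the pointwise hypothesis of the lemma when $\fk^i=\pm\skla{j_i}$ (using $\ga_{j_i}^{\pm\skla{j_i}}\lesssim \om_{j_i}\lesssim\sqrt{K^2+\rho}$ since $|j_i|\le K$), and by the induction hypothesis when $\fk^i\ne\pm\skla{j_i}$. The sum over admissible decompositions $(j_1,j_2,\fk^1,\fk^2)$ has cardinality bounded in terms of $K$ and $\rho$ only, since $|j_i|\le e(j_i)<K$ and $(j_i,\fk^i)\in\calK_{j_i}$ forces $\mu(\fk^i)<2K$ together with $k^i_l=0$ for all $l\ge K$.

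The main delicacy is bookkeeping. First, one must confirm that the non-dominant contributions to the full right-hand side of \eqref{eq-modsystem-coeff} (those with $m_i>m(j_i,\fk^i)$, carrying a prefactor $\de^{2(m(j_1,\fk^1)+m(j_2,\fk^2)-e(j))\nu}>1$) actually sit at higher-order levels $m>e(j)$ and so do not feed back into the equation at $m=e(j)$. Second, one must verify that the inductive statement, stated for $(j',\fk')\in\calK$ with $\fk'\ne\pm\skla{j'}$, $|j'|\le K$ and $e(j')<e(j)$, covers every subcomponent arising on the right-hand side even when $\mu(\fk^i)$ slightly exceeds $K$ (but stays below $2K$ thanks to $\calK$-membership); this mild strengthening of the stated lemma comes at no extra cost from the same inductive argument. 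Both issues reduce to the structural bounds $m(j_i,\fk^i)=e(j_i)$ and the definition of $\calK$ already exploited throughout Section~\ref{sec-proof1}.
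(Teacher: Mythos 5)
Your argument is correct and is essentially the paper's own proof, which is given there only as a one-line reference to the off-diagonal construction and to the proof of Lemma~\ref{lemma-bounds}: induction over the energy level, the observation that $m_1+m_2=e(j)$ forces $m_i=m(j_i,\fk^i)=e(j_i)$ with $e(j_1)+e(j_2)=e(j)$, the strong non-resonance condition \eqref{eq-nonres-strong} to bound $\absbig{4s_{\jvec-\fk}s_{\jvec+\fk}}$ from below by $4\gamma^2\tau^2$ (not $16\gamma^2\tau^2$ --- a harmless slip), and the symplecticity $\psi=\sinc\cdot\phi$ to absorb the factor $\abs{\sinc(\tau\om_j)}^{-1}$. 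Your closing worry about subcomponents with $\mu(\fk^i)>K$ is moot, since \eqref{eq-calK-case1} at level $m_i=e(j_i)<K$ already forces $\mu(\fk^i)\le e(j_i)<K$, so the lemma as stated covers every term in the recursion.
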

\begin{proof}
This follows from the construction of off-diagonal modulation functions as in the case of the exact solution in \cite[Lemma 4]{Gauckler2012}, see also the proof of Lemma \ref{lemma-bounds}.
\end{proof}

\begin{lemma}\label{lemma-calE-diag}
For $0\le t\le 1$ we have
\begin{align*}
\de^{-2e(0)} \absBig{ \calE_{0}(t)-\tfrac12 \omega_{0}^{2}\Bigl(\absbig{z_{0}^{\skla{0}}(\de^\nu t)}^{2}+ \absbig{z_{0}^{-\skla{0}}(\de^\nu t)}^{2}\Bigr) } &\lesssim \delta^{1-2\nu}\\
\de^{-2e(1)} \absBig{ \calE_{1}(t)-\omega_{1}^{2}\Bigl(\absbig{z_{1}^{\skla{1}}(\de^\nu t)}^{2}+ \absbig{z_{1}^{-\skla{1}}(\de^\nu t)}^{2}\Bigr) } &\lesssim \delta^{2(1-2\nu)} \\
\sum_{l=1}^M \sigma_{l} \de^{-2e(l)} \absBig{ \calE_{l}(t)-\omega_{l}^{2}\Bigl(\absbig{z_{l}^{\skla{l}}(\de^\nu t)}^{2}+ \absbig{z_{l}^{-\skla{l}}(\de^\nu t)}^{2}\Bigr) } &\lesssim \delta^{1-2\nu}.
\end{align*}
\end{lemma}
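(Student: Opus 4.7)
The strategy is to start from the alternative form \eqref{eq-calE-alt} of $\calE_l(t)$ and to split the double sum over $(j,\fk)$ into a dominant ``diagonal'' piece, namely the index pairs $\fk=\pm\lvec$ with $j=\pm l$, and an off-diagonal remainder. On the diagonal the ratio $\sinc(\tau(\fk\cdot\fomega))/\sinc(\tau\omega_j)$ collapses to $1$ because $\omega_j=\omega_l=|\fk\cdot\fomega|$, and summing the four index pairs (only two for $l=0$) and using the reality condition $z_{-j}^{-\fk}=\overline{z_j^{\fk}}$ produces precisely the announced $\omega_l^2(|z_l^{\skla{l}}|^2+|z_l^{-\skla{l}}|^2)$, respectively its halved counterpart for $l=0$. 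Moreover the second (derivative) line of \eqref{eq-calE-alt} contributes nothing at the diagonal to leading order, since $z_{j,e(j)}^{\pm\jvec}$ has degree $0$ in the slow variable by \eqref{eq-degree} and its derivative therefore vanishes identically.

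For the off-diagonal quadratic terms I would distinguish two cases. If $\fk=\pm\lvec$ but $j\ne\pm l$, the special vanishing \eqref{eq-specialzero} kills the leading coefficient $z_{j,e(l)}^{\pm\lvec}$ in the expansion \eqref{eq-modfunc-exp}, so that $z_j^{\pm\lvec}=\mathcal{O}(\de^{e(l)+(1-2\nu)})$; squaring and normalising by $\de^{-2e(l)}$ yields $\mathcal{O}(\de^{2(1-2\nu)})$, which is absorbed into $\de^{1-2\nu}$. If $\fk\ne\pm\lvec$ with $k_l\ne 0$, then $\mu(\fk)>e(l)$ strictly (any extra nonzero entry of $\fk$, or $|k_l|\ge 2$, forces this), and the nonvanishing constraints of cases \eqref{eq-calK-case1} and \eqref{eq-calK-case2} together imply $m(j,\fk)\ge\mu(\fk)\ge e(l)+1$, so that $|z_j^\fk|^2=\mathcal{O}(\de^{2e(l)+2})$ and one obtains an even stronger final bound. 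The pointwise estimates are then assembled over $l$ and $j$ using \eqref{eq-sumcalE-aux}, the summed modulation bounds \eqref{eq-bound-modfun-sumall}--\eqref{eq-bound-modfun-sumoffdiag}, and Cauchy--Schwarz, in exactly the same manner as in Section~\ref{subsec-proof-thm4}.

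For the derivative line the key observation is that the leading polynomial $z_{j,m(j,\fk)}^\fk$ has degree $0$, so the first surviving contribution to $\dot{z}_j^\fk$ appears at order $\de^{m(j,\fk)+(1-2\nu)}$; together with the explicit prefactor $\de^\nu$ and with $z_{-j}^{-\fk}=\mathcal{O}(\de^{e(l)})$ whenever $k_l\ne 0$, the $l$th contribution to $\calE_l$ is at most $\mathcal{O}(\de^{2e(l)+(1-2\nu)})$, which after normalisation is the desired $\de^{1-2\nu}$. The sharper bound for $l=1$ exploits the further vanishing $\dot{z}_{j,1}^{\pm\jvec}\equiv\dot{z}_{j,2}^{\pm\jvec}\equiv 0$ (verified in Example~\ref{ex-constr}: the right-hand side of \eqref{eq-construction-diag-der} is identically zero at those orders because the only potentially contributing lower-order modulation functions are constants), which shifts the first nonzero term of $\dot{z}_1^{\pm\skla{1}}$ up by an extra factor $\de^{1-2\nu}$ and so delivers the improved $\de^{2(1-2\nu)}$ bound. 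I expect the main technical obstacle to be keeping all $\de$-powers tight in the off-diagonal sub-case $\fk=\pm\lvec$ with $|j|\ne|l|$, where the single cancellation from \eqref{eq-specialzero} has to be combined with the weak non-resonance factor $|\sinc(\tau\omega_j)|^{-1}$ and the $\sigma_l$-weight so that the summation over $l$ and $j$ remains uniform in the spatial cut-off $M$, in particular for the case-\eqref{eq-calK-case2} tail indices that enter for $l\ge K$ (where the dominant-index identification interacts with the shift parameter $r$).
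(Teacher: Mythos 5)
Your proposal follows essentially the same route as the paper: isolate the diagonal terms $j=\pm l$, $\fk=\pm\lvec$ in the form \eqref{eq-calE-alt} of $\calE_l$ (where the sinc ratio is $1$), kill the off-diagonal quadratic terms via \eqref{eq-specialzero} and $\mu(\fk)>e(l)$, bound the derivative line by the degree-zero property \eqref{eq-degree} of the leading coefficient, use $\dot z_{1,2}^{\pm\skla{1}}=0$ from Example \ref{ex-constr} for the improved $l=1$ bound, and sum with \eqref{eq-sumcalE-aux}, Lemma \ref{lemma-bounds} and Cauchy--Schwarz. One intermediate claim is wrong, though harmlessly so: $m(j,\fk)\ge\mu(\fk)$ does not hold in general (e.g.\ $\fk=3\skla{1}$, $j=1$ gives $m(j,\fk)=1<3=\mu(\fk)$), so the asserted $|z_j^\fk|^2=\mathcal{O}(\de^{2e(l)+2})$ is not justified; the correct statement, which is what the paper uses and which suffices, is $z_j^\fk=\mathcal{O}(\de^{e(l)+1-2\nu})$, because the first nonvanishing coefficient in \eqref{eq-modfunc-exp} sits at $m=\max(e(j),\mu(\fk))\ge e(l)+1$ while $m(j,\fk)\ge e(l)$, so the prefactor is at least $\de^{e(l)+1-2\nu}$.
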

\begin{proof}
We subtract $\omega_{l}^{2}(|z_{l}^{\skla{l}}|^{2}+|z_{l}^{-\skla{l}}|^{2})$ for $l\ge 1$ and $\tfrac12 \omega_{0}^{2}(|z_{0}^{\skla{0}}|^{2}+|z_{0}^{-\skla{0}}|^{2})$ for $l=0$ from the almost-invariant energy $\calE_l(t)$ in the form \eqref{eq-calE-alt}. 
For the modulation functions that appear in this difference we still have $k_{l}\not=0$ and consequently $m(j,\fk)\geq e(l)$ and $k\not=\pm\jvec$ for $|j|\not= l$. By the expansion \eqref{eq-modfunc-exp} and Lemma \ref{lemma-bounds} we get for these functions
\begin{itemize}
\item $z_j^\fk = \mathcal{O}(\delta^{e(l)+1-2\nu})$ for $\fk\not=\pm\jvec$, since $z_{j,e(l)}^{\fk}=0$. For $\fk\ne\pm\lvec$ this follows from \eqref{eq-modfunc-exp} and \eqref{eq-degree} since $\mu(\fk)>e(l)$, and for $\fk=\pm\lvec$ from \eqref{eq-modfunc-exp} and \eqref{eq-specialzero},
\item $z_l^{\pm\lvec} = \mathcal{O}(\delta^{e(l)})$ by \eqref{eq-bound-modfun-easy} and $\dot{z}_l^{\pm\lvec} = \mathcal{O}(\delta^{e(l)+1-2\nu})$ by \eqref{eq-modfunc-exp} and \eqref{eq-degree}.
\end{itemize}
This shows that this difference is in fact of order $\mathcal{O}(\delta^{2e(l)+1-2\nu})$. The improved order for $l=1$ follows from the fact that in this case $\dot{z}_{1,2}^{\pm\skla{1}}=0$ (see Example \ref{ex-constr}), and hence $\dot{z}_1^{\pm\skla{1}} = \mathcal{O}(\delta^{e(1)+2(1-2\nu)})$. To get the summed estimate, we proceed as in the proof of Theorem \ref{thm-4} in Section \ref{subsec-proof-thm4}: We multiply the difference with $\si_l \de^{-2e(l)}$ and sum over $l$, we use \eqref{eq-sumcalE-aux}, and we apply the Cauchy--Schwarz inequality and the estimates of Lemma \ref{lemma-bounds}.
\end{proof}

Combining Lemmas \ref{lemma-dom-modeen}--\ref{lemma-calE-diag} yields the statement of Theorem~\ref{thm-7}, provided that $\de^{1-2\nu}$ is sufficiently small.

\subsection*{Acknowledgement}

We gratefully acknowledge financial support by the Deutsche Forschungsgemeinschaft (DFG) through CRC 1173, CRC 1114 and project GA 2073/2-1.

\addcontentsline{toc}{section}{References}

\end{document}